\newcommand{\Z}{\mathbb{Z}}
\newcommand{\F}{\mathbb{F}}
\renewcommand{\SS}{\mathbb{S}}
\newcommand{\M}{\mathfrak{M}}
\newcommand{\A}{\mathfrak{A}}
\renewcommand{\aa}{\mathfrak{a}}
\newcommand{\bb}{\mathfrak{b}}
\newcommand{\cc}{\mathfrak{c}}
\newcommand{\G}{\mathcal{G}}
\newcommand{\alphah}{\widehat{\alpha}}
\newcommand{\sigmah}{\widehat{\sigma}}
\newcommand{\Kh}{\widehat{K}}
\newcommand{\OO}{\mathfrak{O}}
\newcommand{\Kt}{\widetilde{K}}
\newcommand{\Ev}{\vec{E}}
\newcommand{\omegav}{\vec{\omega}}
\newcommand{\alphav}{\vec{\alpha}}
\newcommand{\Sbar}{\overline{S}}
\DeclareMathOperator{\Gal}{Gal}
\DeclareMathOperator{\Aut}{Aut}
\DeclareMathOperator{\ch}{char}
\DeclareMathOperator{\id}{id}
\newtheorem{theorem}{Theorem}[section]
\newtheorem{proposition}[theorem]{Proposition}
\newtheorem{lemma}[theorem]{Lemma}
\newtheorem{corollary}[theorem]{Corollary}
\theoremstyle{definition}
\newtheorem{remark}[theorem]{Remark}
\newtheorem{definition}[theorem]{Definition}
\newtheorem{example}[theorem]{Example}
\numberwithin{equation}{section}
\title{Galois scaffolds for extraspecial $p$-extensions
in characteristic 0}
\author{Kevin Keating \\
Department of Mathematics \\
University of Florida \\
Gainesville, FL 32611 \\
USA \\[.2cm]
{\tt keating@ufl.edu}
\and
Paul Schwartz \\
Department of Mathematical Sciences \\
Stevens Institute of Technology \\
Hoboken, NJ 07030 \\
USA \\[2mm]
{\tt pschwart@stevens.edu}}
\begin{document}

\maketitle

\begin{abstract}
Let $K$ be a local field of characteristic 0 with
residue characteristic $p$.  Let $G$ be an extraspecial
$p$-group and let $L/K$ be a totally ramified
$G$-extension.  In this paper we find sufficient
conditions for $L/K$ to admit a Galois scaffold.  This
leads to sufficient conditions for the ring of integers
$\OO_L$ to be free of rank 1 over its associated order
$\A_{L/K}$, and to stricter conditions which imply that
$\A_{L/K}$ is a Hopf order in the group ring $K[G]$.
\end{abstract}

\section{Introduction}

Let $L/K$ be a finite totally ramified extension of
local fields and set $G=\Gal(L/K)$.  The associated
order in $K[G]$ of the ring of integers $\OO_L$ of $L$
is defined to be
\[\A_{L/K}=\{\lambda\in K[G]:
\lambda(\OO_L)\subset\OO_L\}.\]
Thus $\A_{L/K}$ is an $\OO_K$-order in $K[G]$, and
$\OO_L$ is a module over $\A_{L/K}$.  It is a
classical problem to determine when $\OO_L$ is free
(necessarily of rank 1) over $\A_{L/K}$.  In the case
where $L/K$ is tamely ramified  we have
$\A_{L/K}=\OO_K[G]$, and E. Noether showed that $\OO_L$
is free over $\A_{L/K}$ \cite{No32}.  Much less is known
about this problem in the case where $L/K$ is wildly
ramified.  When $L/K$ is a ramified $C_p$-extension,
necessary and sufficient conditions for $\OO_L$ to be
free over $\A_{L/K}$ were given in \cite{BF72,BBF72} for
the case $\ch(K)=0$, and in \cite{Ai03,Le05} for the
case $\ch(K)=p$.  Sufficient conditions are known for
$\OO_L$ to be free over $\A_{L/K}$ in the cases where
$G$ is an elementary abelian $p$-group \cite{BE18},
$G\cong C_{p^2}$ and $\ch(K)=p$ \cite{BE13}, or
$G\cong C_{p^2}$ and $\ch(K)=0$ \cite{KS22}.  When
$\ch(K)=p$, \cite{EKgen} gives sufficient conditions for
$\OO_L$ to be free over $\A_{L/K}$ for an arbitrary
$p$-group $G$.

     In this paper we give sufficient conditions for
$\OO_L$ to be free over $\A_{L/K}$ in the cases where
$\ch(K)=0$ and $G=\Gal(L/K)$ is an extraspecial
$p$-group.  As an application we give sufficient
conditions for $\A_{L/K}$ to be a Hopf order.  Our
proofs are based on constructing extensions $L/K$ which
possess a Galois scaffold, as defined in \cite{BCE18}.
We focus on the case $\ch(K)=0$, but all the statements
and proofs presented here are valid, and often simpler,
when $\ch(K)=p$.

     Let $K$ be a local field of characteristic 0 with
residue characteristic $p$.  Then $K$ is complete with
respect to a discrete valuation $v_K$ which is
normalized so that $v_K(K^\times)=\Z$.  The ring of
integers of $K$ is $\OO_K=\{x\in K:v_K(x)\geq 0\}$ and
the unique maximal ideal of $\OO_K$ is
$\M_K=\{x\in K:v_K(x)\geq 1\}$.  Let $\pi_K$ be a
uniformizer for $K$.  Thus $\pi_K$ is any element of
$\OO_K$ that satisfies $v_K(\pi_K)=1$, or equivalently
$\M_K=(\pi_K)$.  We assume that the residue field
$\OO_K/\M_K$ of $K$ is a perfect field of characteristic
$p$.  Let $e_K=v_K(p)$ be the absolute
ramification index of $K$ and let $K^{sep}$ be a
separable closure of $K$.  Then $v_K$ extends uniquely
to a valuation on $K^{sep}$, which we also denote by
$v_K$.  We will often work with towers
$K_0\subset K_1\subset\dots\subset K_n$ of field
extensions of degree $p$.  In this case we usually
denote $v_{K_i},\OO_{K_i},\M_{K_i},\pi_{K_i},e_{K_i}$ by
$v_i,\OO_i,\M_i,\pi_i,e_i$.

\section{Higher ramification theory}

In this section we recall some facts about higher
ramification theory of finite separable extensions of
local fields.  We do not assume that our extensions are
Galois, but for simplicity we only consider extensions
whose degree is a power of $p$.  For more information
about higher ramification theory for extensions which
are separable, but not necessarily Galois, see
\cite{He90}, \cite{He91}, the appendix to \cite{De84},
or Chapter~13 of \cite{Hopf}.

     Let $K$ be a local field with residue
characteristic $p$ and let $L/K$ be a subextension of
$K^{sep}/K$ of degree $p^n$.  Let $L_0/K$ be the maximal
unramified subextension of $L/K$.  Let $\Gamma$ denote
the set of $K$-embeddings of $L$ into $K^{sep}$, and let
$\Gamma_0$ be the subset of $\Gamma$ consisting of
$\sigma\in\Gamma$ such that $\sigma(L_0)=L_0$ and
$\sigma$ induces the identity map on the residue field
of $L_0$.  For real $x>0$ define
\[\Gamma_x
=\{\gamma\in\Gamma_0:v_L(\gamma(\pi_L)-\pi_L)\geq x+1\}.\]
Then $\Gamma_x$ does not depend on the choice of
uniformizer $\pi_L$ for $L$.  Observe that
$\Gamma_x\subset\Gamma_y$ for $0\le y\le x$, and
$\Gamma_x=\{\id_{L}\}$ for sufficiently large $x$.  We
say that $\Gamma_x$ is the $x$th subset in the lower
ramification filtration of $\Gamma$.

\begin{lemma}
For each $x>0$, $|\Gamma_x|$ is a power of $p$.
\end{lemma}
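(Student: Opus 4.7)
The plan is to realize $\Gamma_x$ as one equivalence class of a relation on $\Gamma_0$ that is invariant under a transitive group action, forcing all classes to share a common cardinality and hence $|\Gamma_x|$ to divide the $p$-power $|\Gamma_0|$.

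First I would unravel the definition of $\Gamma_0$. Because $L_0/K$ is unramified, the condition $\sigma(L_0) = L_0$ together with $\sigma$ inducing the identity on the residue field of $L_0$ forces $\sigma|_{L_0} = \id$, so $\Gamma_0$ is precisely the set of $L_0$-embeddings of $L$ into $K^{sep}$. Hence $|\Gamma_0| = [L:L_0]$, which is a power of $p$ (say $p^m$), as it divides $[L:K] = p^n$. Let $\widetilde{L} \subset K^{sep}$ be the Galois closure of $L/L_0$ and set $\widetilde{G} = \Gal(\widetilde{L}/L_0)$, $H = \Gal(\widetilde{L}/L)$. Every $\gamma \in \Gamma_0$ factors through $\widetilde{L}$, and restriction to $L$ yields a $\widetilde{G}$-equivariant bijection $\widetilde{G}/H \to \Gamma_0$ for the natural action of $\widetilde{G}$ on $\Gamma_0$ by post-composition; in particular this action is transitive.

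Next, I would define a binary relation on $\Gamma_0$ by $\gamma \sim_x \gamma'$ if and only if $v_L(\gamma(\pi_L) - \gamma'(\pi_L)) \geq x+1$, using the unique extension of $v_L$ to $K^{sep}$. Reflexivity and symmetry are immediate, and transitivity is the ultrametric inequality. By construction, the equivalence class of $\id_L$ under $\sim_x$ is exactly $\Gamma_x$.

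The decisive observation is that $\widetilde{G}$ preserves $\sim_x$: for any $\widetilde{\tau} \in \widetilde{G}$,
\[
v_L(\widetilde{\tau}\gamma(\pi_L) - \widetilde{\tau}\gamma'(\pi_L)) = v_L(\widetilde{\tau}(\gamma(\pi_L) - \gamma'(\pi_L))) = v_L(\gamma(\pi_L) - \gamma'(\pi_L)),
\]
since $\widetilde{\tau}$ is an isometry of $v_{\widetilde{L}}$ and hence of its scalar multiple $v_L|_{\widetilde{L}}$. The transitive $\widetilde{G}$-action on $\Gamma_0$ therefore permutes $\sim_x$-equivalence classes transitively, so all classes share a common cardinality, namely $|\Gamma_x|$. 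Consequently $|\Gamma_x|$ divides $|\Gamma_0| = p^m$, and so must itself be a power of $p$. I expect the only delicate point is the verification that the extended valuation $v_L$ on $\widetilde{L}$ really is Galois-invariant in this non-Galois setting; this is standard, and once granted the argument is purely formal—no detailed analysis of the ramification filtration of $\widetilde{G}$ is required.
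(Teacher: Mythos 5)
Your proof is correct and takes essentially the same approach as the paper's: both pass to a Galois closure and deduce the statement from a Lagrange-type counting argument, with the ultrametric inequality and Galois-invariance of the extended valuation doing the real work. The only cosmetic differences are that the paper forms the closure $M$ over $K$ and identifies $\Gamma_x$ with $G(x)/H$ for a chain of subgroups $H \le G(x) \le G$, whereas you form the closure over $L_0$ and package the same count as a $\widetilde{G}$-invariant equivalence relation on $\Gamma_0$ whose classes all have size $|\Gamma_x|$.
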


\begin{proof}
Let $M/K$ be the Galois closure of $L/K$.  Set
$G=\Gal(M/K)$ and $H=\Gal(M/L)$.  Then $\Gamma$ can be
identified with the coset space $G/H$.  Let $x>0$ and
set
\[G(x)=\{\sigma\in G:v_L(\sigma(\pi_L)-\pi_L)
\geq x+1\}.\]
Then $H\le G(x)\le G$ and $\Gamma_x$ is identified
with $G(x)/H$.  The lemma now follows from
Lagrange's theorem.
\end{proof}

     Say that $b>0$ is a lower ramification number for
$L/K$ if $\Gamma_b\not=\Gamma_{b+\epsilon}$ for all
$\epsilon>0$.  Say that $b>0$ is a lower ramification
number with multipicity $m$ if
$|\Gamma_b|/|\Gamma_{b+\epsilon}|=p^m$.  Let $p^t$ be
the ramification index of $L/K$.  Then $L/K$ has $t$
positive lower ramification numbers, counted with
multiplicity.  The positive lower ramification numbers
of $L/K$ can be viewed either as a multiset or as a
nondecreasing sequence $b_1\le b_2\le\dots\le b_t$ of
rational numbers.  To account for the unramified part of
$L/K$ we say that $-1$ is a lower ramification number of
$L/K$ with multipicity $n-t$.  Thus $L/K$ has a total of
$n$ lower ramification numbers, counted with
multiplicity.

     We define the positive upper ramification numbers
of $L/K$ by setting $u_1=b_1$ and
$u_{i+1}=u_i+p^{-i}(b_{i+1}-b_i)$ for $1\leq i\leq t-1$.
In addition, if $t<n$ we say that $-1$ is an upper
ramification number of $L/K$ with multipicity $n-t$.
For $1\le i\le t$ let $\Gamma^{u_i}=\Gamma_{b_i}$, and
for real $x\ge0$ set
\[\Gamma^x=\begin{cases}
\Gamma^{u_1}&\text{if }0\le x\leq u_1, \\
\Gamma^{u_j}&\text{if $u_{j-1}<x\leq u_j$ for some
$2\le j\le t$}, \\
\{\id_L\}&\text{if }x>u_t.
\end{cases}\]
Let $M/K$ be a subextension of $L/K$, and let $\Delta$
be the set of $K$-embeddings of $M$ into $K^{sep}$.  We
can define ramification subsets $\Delta_x$ and
$\Delta^x$ of $\Delta$, which give us ramification
numbers for $M/K$.  For $x>0$ we have
$\Delta^x=\{\gamma|_M:\gamma\in\Gamma^x\}$ (see for
instance Theorem~13.15 in \cite{Hopf}).  It follows that
the multiset of upper ramification numbers of $M/K$ is
contained in the multiset of upper ramification numbers
of $L/K$.

     Let $L/K$ be a Galois extension of degree $p^n$ and
set $G=\Gal(L/K)$.  Then $G$ may be identified with the
set of $K$-embeddings of $L$ into $K^{sep}$, so the
definitions given above are valid with $\Gamma$ replaced
by $G$.  In this setting we find that $G_x$ and $G^x$
are normal subgroups of $G$ for all $x\ge0$.  Let
$H\trianglelefteq G$ and set $M=L^H$.  Then the upper
ramification filtration of $\Gal(M/K)\cong G/H$ is given
by $(G/H)^x=G^xH/H$ for $x\ge0$.  

     We will need the following technical facts about
ramification numbers:

\begin{proposition}
\cite[Example 1.2]{BE18}
Let $L/K$ be a totally ramified $C_p$-extension of
local fields with ramification number $b$, and let
$x\in L\smallsetminus \{0\}$.  Then
$v_L((\sigma-1)x)\geq v_L(x)+b$, with equality if and
only if $p\nmid v_L(x)$.
\end{proposition}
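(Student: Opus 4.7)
The plan is to expand $x$ in the $K$-basis $1, \pi_L, \dots, \pi_L^{p-1}$ of $L$ and analyze the action of $\sigma - 1$ term by term. Fix a generator $\sigma$ of $\Gal(L/K)$. By the definition of the ramification number $b$, $v_L(\sigma(\pi_L)-\pi_L)=b+1$, so writing $\sigma(\pi_L)=\pi_L+\alpha$ one has $v_L(\alpha)=b+1$.

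The key local computation is $v_L((\sigma-1)\pi_L^k)$ for a positive integer $k$, via the binomial theorem:
\[
(\sigma-1)\pi_L^k = (\pi_L+\alpha)^k - \pi_L^k = \sum_{j=1}^{k}\binom{k}{j}\pi_L^{k-j}\alpha^j.
\]
The $j=1$ term has valuation $v_L(k)+(k-1)+(b+1)=v_L(k)+k+b$, while every term with $j\geq 2$ has valuation at least $(k-j)+j(b+1)=k+jb\geq k+2b$. Hence if $p\nmid k$ then $v_L(k)=0$ and the $j=1$ term strictly dominates, giving $v_L((\sigma-1)\pi_L^k)=k+b$; whereas if $p\mid k$ then $v_L(k)\geq e_L\geq 1$ in characteristic $0$ (or $+\infty$ in characteristic $p$), and every term has valuation strictly greater than $k+b$.

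Since $L/K$ is totally ramified of degree $p$, the minimal polynomial of $\pi_L$ over $K$ is Eisenstein, so $\{1,\pi_L,\dots,\pi_L^{p-1}\}$ is a $K$-basis of $L$. Write $x=\sum_{i=0}^{p-1}a_i\pi_L^i$ with $a_i\in K$; since $\sigma$ fixes $K$, one has $(\sigma-1)x = \sum_{i=1}^{p-1}a_i(\sigma-1)\pi_L^i$. The crucial observation is that the valuations $v_L(a_i\pi_L^i)=pv_K(a_i)+i$ (over those $i$ with $a_i\neq 0$) have pairwise distinct residues modulo $p$, so the minimum is uniquely attained; thus $v_L(x)=pv_K(a_{i_0})+i_0$ for a unique $i_0\in\{0,\dots,p-1\}$, and $p\nmid v_L(x)$ precisely when $i_0\geq 1$. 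The same distinctness-mod-$p$ applies to the valuations $pv_K(a_i)+i+b$ for $1\leq i\leq p-1$, which pins down $v_L((\sigma-1)x)$ exactly.

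To conclude: if $i_0\geq 1$, then $i_0$ simultaneously minimizes both families, yielding $v_L((\sigma-1)x)=v_L(x)+b$; if $i_0=0$, then $pv_K(a_0)<pv_K(a_i)+i$ for every $i\geq 1$ with $a_i\neq 0$ (and $(\sigma-1)x=0$ if no such $i$ exists), so $v_L((\sigma-1)x)>v_L(x)+b$. The main technical point I expect is the distinctness of the valuations modulo $p$: this is what upgrades the naive bound $v_L(\text{sum})\geq\min v_L(\text{terms})$ to equality and extracts the exact valuation from the Eisenstein expansion; everything else is a routine binomial estimate.
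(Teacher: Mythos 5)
Your proof is correct. Note that the paper does not actually prove this proposition---it is quoted from \cite[Example 1.2]{BE18} with no argument given---so there is nothing in-paper to compare against; your Eisenstein-basis computation is the standard self-contained proof of this fact. All the steps check: the binomial estimate uses $b\ge 1$ (automatic, since a totally ramified $C_p$-extension in residue characteristic $p$ is wildly ramified) to make the $j\ge 2$ terms strictly larger than the $j=1$ term, and the distinctness modulo $p$ of the valuations $pv_K(a_i)+i$ (and of $pv_K(a_i)+i+b$ for $1\le i\le p-1$) is exactly what turns the ultrametric inequality into an equality and identifies the minimizing index $i_0$, with $i_0=0$ corresponding precisely to $p\mid v_L(x)$. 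The degenerate case $x\in K$, where $(\sigma-1)x=0$, is also correctly absorbed into the strict-inequality branch.
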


\begin{proposition} \label{BigBreak}
Let $L/K$ be a totally ramified Galois extension of
degree $p^n$, with upper ramification numbers
$u_1\le\dots\le u_{n-2}\le u_{n-1}<u_n$.  Set
$G=\Gal(L/K)$, and suppose there is $H\le Z(G)$ such
that $H\cong C_p^2$.  Let $M=L^H$ be the fixed field of
$H$, and assume that $u_n$ is not an upper ramification
number of $M/K$.  Then there is a unique
$C_p$-subextension $F_0/M$ of $L/M$ such that $F_0/K$
has ramification numbers $u_1,\dots,u_{n-2},u_{n-1}$.
For all other $C_p$-subextensions $F/M$ of $L/M$, $u_n$
is an upper ramification number of $F/K$.
\end{proposition}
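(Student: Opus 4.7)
The plan is to exploit the quotient formula $(G/N)^x = G^x N/N$ for the upper ramification filtration, which was recorded in the preceding discussion, together with a multiplicity count for $G^{u_n}$. Since $u_n$ appears with multiplicity one among the $n$ upper ramification numbers of $L/K$, the group $G^{u_n}$ has order $p$ and $G^{u_n+\epsilon}=\{\id\}$ for small $\epsilon>0$. Applying the quotient formula to $M=L^H$, the hypothesis that $u_n$ is not an upper ramification number of $M/K$ becomes $G^{u_n}H/H=G^{u_n+\epsilon}H/H=\{\id\}$, i.e., $G^{u_n}\le H$. Thus $G^{u_n}$ is one of the $p+1$ subgroups of order $p$ in $H\cong C_p^2$, and this will be the distinguished subgroup.

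The natural candidate is then $F_0:=L^{G^{u_n}}$. Since $H\le Z(G)$, the subgroup $G^{u_n}\le H$ is central and hence normal in $G$, so $F_0/K$ is Galois with group $G/G^{u_n}$, and $M\subseteq F_0\subseteq L$. Computing $(G/G^{u_n})^x=G^xG^{u_n}/G^{u_n}$: at any jump $v<u_n$ of the filtration of $G$ one has $G^{v+\epsilon}\supseteq G^{u_n}$, so $G^v/G^{u_n}$ and $G^{v+\epsilon}/G^{u_n}$ differ by the same factor $G^v/G^{v+\epsilon}$, giving a jump of the same multiplicity in the quotient filtration; at $x=u_n$ both $G^{u_n}/G^{u_n}$ and $G^{u_n+\epsilon}/G^{u_n}$ are trivial, so there is no jump. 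Hence the upper ramification numbers of $F_0/K$ are exactly $u_1,\dots,u_{n-1}$.

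For the complementary statement, let $F/M$ be any $C_p$-subextension of $L/M$ other than $F_0$. Writing $F=L^{H'}$ for a subgroup $H'<H$ of order $p$ with $H'\neq G^{u_n}$, normality of $H'$ in $G$ again follows from $H\le Z(G)$, so $F/K$ is Galois and its upper ramification filtration is $(G/H')^x=G^xH'/H'$. Since $G^{u_n}$ and $H'$ are two distinct order-$p$ subgroups of $H$, $G^{u_n}\not\le H'$, so $G^{u_n}H'/H'$ has order $p$ while $G^{u_n+\epsilon}H'/H'=H'/H'$ is trivial. Therefore $u_n$ is an upper ramification break of $F/K$, establishing both the uniqueness of $F_0$ and the assertion about the other $C_p$-subextensions.

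I do not expect a serious obstacle in this argument: once the multiplicity hypothesis pins down $|G^{u_n}|=p$ and the centrality of $H$ supplies the needed normality, everything is a direct application of the quotient formula. The one point requiring slight care is checking that the jumps of $G^x/G^{u_n}$ really do coincide with those of $G^x$ below $u_n$—which amounts to the inclusion $G^{v+\epsilon}\supseteq G^{u_n}$ noted above.
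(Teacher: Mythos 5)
Your argument is correct and follows essentially the same route as the paper: use the hypothesis to get $G^{u_n}\le H$ with $|G^{u_n}|=p$, set $F_0=L^{G^{u_n}}$, and apply the quotient formula $(G/A)^x=G^xA/A$ to see that $u_n$ survives as a break of $L^A/K$ exactly when $A\ne G^{u_n}$. The only cosmetic difference is at the end: the paper invokes the previously stated fact that the multiset of upper breaks of a subextension is contained in that of $L/K$ to read off the breaks of $F_0/K$, whereas you re-derive this by checking that each jump below $u_n$ persists in the quotient filtration.
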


\begin{proof}
Since $u_n$ is not an upper ramification number of $M/K$
we have $G^{u_n}H/H=(G/H)^{u_n}=\{1\}$, and hence
$G^{u_n}\le H$.  Let $A\leq H$ be such that $|A|=p$.
Then $(G/A)^{u_n+\epsilon}=G^{u_n+\epsilon}A/A$ is
necessarily trivial, and $(G/A)^{u_n}=G^{u_n}A/A$ is
trivial if and only if $G^{u_n}\le A$.  Since
$|G^{u_n}|=p$ it follows that $u_n$ is an upper
ramification number of $L^A/K$ if and only if
$A\not=G^{u_n}$.  Set $F_0=L^{G^{u_n}}$.  Then $u_n$ is
not an upper ramification number of $F_0/K$, so the
upper ramification numbers of $F_0/K$ are
$u_1,\dots,u_{n-2},u_{n-1}$.
\end{proof}

\begin{proposition} \label{decreasing}
Let $L/K$ be a totally ramified extension of degree
$p^n$.  Let $b_1\le\dots\le b_n$ be the lower
ramification numbers of $L/K$ and let
$u_1\le\dots\le u_n$ be the upper ramification numbers.
Then for $1\leq i\leq n-1$ and $m\geq i$ we have
$b_{i+1}-b_i\le p^m(u_{i+1}-u_i)$.  
\end{proposition}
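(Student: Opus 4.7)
The plan is to observe that this proposition is essentially a direct consequence of the definition of the upper ramification numbers. Since $L/K$ is totally ramified of degree $p^n$, its ramification index $p^t$ equals $p^n$, so $t = n$ and all $n$ lower ramification numbers $b_1 \le \dots \le b_n$ are positive (no $-1$'s appear). Thus the recurrence $u_1 = b_1$, $u_{i+1} = u_i + p^{-i}(b_{i+1} - b_i)$ for $1 \le i \le n-1$, given just before the proposition, governs the entire sequence without any index shift.

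First I would rewrite this recurrence as an identity for the successive differences:
\[
b_{i+1} - b_i = p^{i}(u_{i+1} - u_i).
\]
This already yields equality in the desired inequality when $m = i$. Then, since the upper ramification numbers form a nondecreasing sequence $u_1 \le \dots \le u_n$, we have $u_{i+1} - u_i \ge 0$. For any integer $m \ge i$ the factor $p^m \ge p^i$, so multiplying the nonnegative quantity $u_{i+1} - u_i$ by the larger factor can only increase it:
\[
b_{i+1} - b_i \;=\; p^{i}(u_{i+1} - u_i) \;\le\; p^{m}(u_{i+1} - u_i).
\]
This is the claimed bound.

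Since the argument is one line, there is no real obstacle; the only thing to verify is that we are in the regime where the definition applies uniformly, i.e.\ that $L/K$ being totally ramified of degree $p^n$ forces $t = n$ so that the defining recurrence holds for every $i$ from $1$ to $n-1$. The monotonicity of the $u_i$, needed to pass from the equality to the inequality, is itself an immediate consequence of the monotonicity of the $b_i$ via the same recurrence, so no separate ramification-theoretic input is required.
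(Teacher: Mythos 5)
Your proof is correct and matches the paper's own argument, which is exactly the one-line observation that $b_{i+1}-b_i=p^i(u_{i+1}-u_i)\le p^m(u_{i+1}-u_i)$ by the defining recurrence for the upper ramification numbers. Your additional checks (that total ramification forces $t=n$ and that $u_{i+1}-u_i\ge0$) are sound and merely make explicit what the paper leaves implicit.
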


\begin{proof}
In fact $b_{i+1}-b_i=p^i(u_{i+1}-u_i)\le
p^m(u_{i+1}-u_i)$.
\end{proof}

\begin{corollary} \label{upper vs lower}
Let $L/K$ be a totally ramified extension of degree
$p^n$.  Let $b_1\le\dots\le b_n$ be the lower
ramification numbers of $L/K$ and let
$u_1\le\dots\le u_n$ be the upper ramification numbers.
Then
\begin{enumerate}[(a)]
\item $b_j-b_i\leq p^{j-1}(u_j-u_i)$ for
$1\leq i\leq j\leq n$.
\item $b_j\le p^{j-1}u_j$ for $1\le j\le n$, with
equality only if $j=1$.
\end{enumerate}
\end{corollary}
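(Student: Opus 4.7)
The plan is to get both parts directly from the exact identity $b_{k+1} - b_k = p^k(u_{k+1} - u_k)$ that appears in the proof of Proposition~\ref{decreasing}. For part (a) I would telescope:
\[
b_j - b_i = \sum_{k=i}^{j-1}(b_{k+1} - b_k) = \sum_{k=i}^{j-1} p^k(u_{k+1} - u_k).
\]
Each factor $u_{k+1} - u_k$ is nonnegative since the upper ramification numbers are weakly increasing, and each $p^k$ is at most $p^{j-1}$ because $k \leq j-1$. Replacing $p^k$ by $p^{j-1}$ in every term and pulling it outside the sum gives the claimed bound $p^{j-1}(u_j - u_i)$.

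For part (b) I would specialize part (a) to $i = 1$ and add $b_1 = u_1$ to both sides, obtaining
\[
b_j \leq u_1 + p^{j-1}(u_j - u_1) = p^{j-1} u_j - (p^{j-1} - 1) u_1.
\]
Since $L/K$ is totally ramified, every lower ramification number is positive, so in particular $u_1 = b_1 > 0$. Hence the correction term $(p^{j-1} - 1) u_1$ is strictly positive whenever $j \geq 2$, which forces strict inequality in that range. For $j = 1$ the bound reduces to $b_1 \leq u_1$, which is an equality by the definition of $u_1$.

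I do not foresee a real obstacle: Proposition~\ref{decreasing} supplies essentially everything, and both parts are bookkeeping on top of it. The only point that needs care is invoking total ramification in (b) to secure $u_1 > 0$; this is what upgrades the weak inequality to a strict one whenever $j \geq 2$, and without it one could only assert $b_j \leq p^{j-1} u_j$ with equality possible at every $j$.
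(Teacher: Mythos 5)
Your proof is correct and follows essentially the same route as the paper's: part (a) by telescoping the identity $b_{k+1}-b_k=p^k(u_{k+1}-u_k)$ and bounding $p^k\le p^{j-1}$, and part (b) by specializing to $i=1$ and using $b_1=u_1$. Your write-up of (b) is actually more complete than the paper's one-line version, since you explicitly justify the strictness for $j\ge 2$ via $u_1=b_1>0$, which does indeed require total ramification.
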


\begin{proof}
(a) The claim is clear when $i=j$.  Now assume
$1\leq i<j\leq n$.  Proposition~\ref{decreasing} gives
$b_{h+1}-b_h\le p^{j-1}(u_{h+1}-u_h)$ for
$i\leq h\leq j-1$.  Thus
\begin{align*}
    b_j-b_i & = \sum_{h=i}^{j-1}\,(b_{h+1}-b_h) \\
    & \leq \sum_{h=i}^{j-1}\,p^{j-1}(u_{h+1}-u_h) \\
    & = p^{j-1}(u_j-u_i).
\end{align*}
(b) This follows from (a) by setting $i=1$ and noting
that $b_1=u_1$.
\end{proof}

\section{Constructing abelian $p$-extensions}

Let $K$ be a local field of characteristic 0 whose
residue field has characteristic $p$.  In this section
we use Artin-Schreier polynomials and Witt addition
polynomials to construct totally ramified abelian
$p$-extensions $L/K$.  These results will be used in
Sections~\ref{hi de he} and \ref{hi met section}.  The
corresponding constructions for local fields of
characteristic $p$ are well-known.

     We define the Artin-Schreier polynomial
$\wp(X)\in\F_p[X]$ by $\wp(X)=X^p-X$.  Given $a\in K$
with $v_K(a)>-pe_K/(p-1)$, choose $\alpha\in K^{sep}$
such that $\wp(\alpha)=a$.  Let $\G_K=\Gal(K^{sep}/K)$
be the absolute Galois group of $K$ and let
$\sigma\in\G_K$.  By \cite[Theorem~5]{MW56} there is
$m_0\in\Z$ such that
$(\sigma-1)\alpha\equiv m_0\pmod{\M_K}$.  Define
$m_{\sigma}$ to be the image of $m_0$ in
$\F_p\cong\Z/p\Z$.  Then $m_{\sigma}$ does not depend on
the choice of $\alpha$, so we may define
$\chi_a^K:\G_K\to\F_p$ by $\chi_a^K(\sigma)=m_{\sigma}$.
Then $\chi_a^K$ is a continuous homomorphism.

\begin{proposition} \label{Cp extension}
Let $a\in K$ satisfy $-pe_K/(p-1)<v_K(a)<0$ and
$p\nmid v_K(a)$.  Choose $\alpha\in K^{sep}$ such that
$\wp(\alpha)=a$ and set $L=K(\alpha)$.  Then $L/K$ is a
totally ramified $C_p$-extension with upper and lower
ramification number $-v_K(a)$.  Set
$H=\ker\chi_a^K\leq\G_K$.  Then $L=(K^{sep})^H$ is the
fixed field of $H$.
\end{proposition}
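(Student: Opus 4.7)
The plan is to: (i) pin down $v_K(\alpha)$ from the Artin-Schreier equation; (ii) deduce that $L/K$ is a totally ramified cyclic Galois extension of degree $p$; (iii) compute its ramification numbers via the $C_p$ result from \cite[Example 1.2]{BE18} applied to $x = \alpha$; and (iv) identify $H = \ker \chi_a^K$ with $\Gal(K^{sep}/L)$.

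For (i) and (ii): Since $v_K(a) < 0$, the relation $\alpha^p - \alpha = a$ forces $v_K(\alpha) < 0$, so $v_K(\alpha^p) = p\, v_K(\alpha)$ strictly dominates $v_K(\alpha)$, and we read off $v_K(\alpha) = v_K(a)/p$. The hypothesis $p \nmid v_K(a)$ makes this value non-integral, so $\alpha \notin K$; since $\alpha$ is a root of the degree-$p$ polynomial $X^p - X - a$, we conclude $[L:K] = p$, and the requirement that $v_L(\alpha) = e(L/K)\, v_K(a)/p$ lie in $\Z$ together with $\gcd(v_K(a), p) = 1$ forces $e(L/K) = p$, so $L/K$ is totally ramified. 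The other roots $\alpha + 1, \ldots, \alpha + p - 1$ all lie in $L$, so $L/K$ is Galois with $\Gal(L/K) = \langle \sigma \rangle \cong C_p$, where $\sigma(\alpha) = \alpha + 1$.

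For (iii) and (iv): Apply the cited $C_p$ result with $x = \alpha$. Since $(\sigma - 1)\alpha = 1$ and $v_L(\alpha) = v_K(a)$ is coprime to $p$, the inequality becomes equality: $0 = v_L(\alpha) + b$, giving $b = -v_K(a)$. Because $[L:K] = p$ has only one ramification break, the upper and lower filtrations agree there, and $u_1 = b_1 = -v_K(a)$. Finally, each $\tau \in \G_K$ permutes the roots of $X^p - X - a$, so $\tau(\alpha) = \alpha + m_\tau$ with $m_\tau \in \{0, \ldots, p-1\}$, and $\chi_a^K(\tau)$ is by definition the image of $m_\tau$ in $\F_p$; thus $\tau \in H$ iff $\tau$ fixes $\alpha$ iff $\tau$ fixes $L = K(\alpha)$, which identifies $H$ with $\Gal(K^{sep}/L)$ and gives $L = (K^{sep})^H$. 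I do not anticipate a genuine obstacle: the hypothesis $v_K(a) > -p e_K/(p-1)$ enters only to guarantee that $\chi_a^K$ is well-defined via the reference cited just before the proposition.
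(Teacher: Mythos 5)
There is a genuine gap, and it is exactly at the point where the characteristic of $K$ matters. You write that ``the other roots $\alpha+1,\ldots,\alpha+p-1$ all lie in $L$, so $L/K$ is Galois with $\sigma(\alpha)=\alpha+1$,'' and later that every $\tau\in\G_K$ sends $\alpha$ to $\alpha+m_\tau$ exactly. This is the characteristic-$p$ picture; here $\ch(K)=0$, so $\wp(\alpha+1)=\wp(\alpha)+\sum_{i=1}^{p-1}\binom{p}{i}\alpha^{p-i}\neq a$ in general, and $\alpha+1$ is \emph{not} a root of $X^p-X-a$. Consequently neither the Galois-ness of $L/K$ nor the existence of a generator acting by $\alpha\mapsto\alpha+1$ is available by your argument; indeed, for $v_K(a)$ too negative the extension $K(\alpha)/K$ need not be Galois at all. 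The paper's proof instead invokes \cite[Theorem~5]{MW56}: the nontriviality of $\chi_a^K$ (which follows since $a\notin\wp(K)$ under your hypotheses) gives that $L/K$ is a $C_p$-extension, and that result supplies a $\tau\in\Gal(L/K)$ with only the approximate relation $(\tau-1)\alpha\equiv1\pmod{\M_L}$. This also shows that your closing remark is too optimistic: the hypothesis $v_K(a)>-pe_K/(p-1)$ is not merely what makes $\chi_a^K$ well-defined; it is what makes the whole approximate Artin--Schreier theory (Galois-ness, cyclicity, and the congruence for $(\tau-1)\alpha$) work in characteristic $0$.

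The rest of your outline is sound and largely recoverable once this is repaired. Your valuation computation $v_K(\alpha)=v_K(a)/p$ and the deduction $e(L/K)=p$ (hence $[L:K]=p$ and total ramification) match the paper. Your route to the ramification number via \cite[Example 1.2]{BE18} applied to $x=\alpha$ still works with the weakened input $v_L((\tau-1)\alpha)=0$ coming from $(\tau-1)\alpha\equiv1\pmod{\M_L}$; the paper instead cites Proposition~2.5 of \cite[III]{FV}, so this is a harmless variant. For the last claim, replace ``$\tau\in H$ iff $\tau$ fixes $\alpha$'' by the observation that $\Gal(K^{sep}/L)\subseteq\ker\chi_a^K=H$ (elements fixing $\alpha$ have $m_\tau\equiv0$) and that both subgroups have index $p$ in $\G_K$, forcing equality.
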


\begin{proof}
Set $\wp(K)=\{\wp(b):b\in K\}$.  Our hypotheses on $a$
imply that $a\not\in\wp(K)$, so $\chi_a^K$ is
nontrivial.  Hence by \cite[Theorem~5]{MW56}, $L/K$ is a
$C_p$-extension.  Furthermore, there is
$\tau\in\Gal(L/K)$ such that
$(\tau-1)\alpha\equiv1\pmod{\M_L}$.  Since
$p$ does not divide $v_K(a)=pv_K(\alpha)$ we see that
$L/K$ is totally ramified.  In addition, it follows from
Proposition~2.5 of \cite[III]{FV} that the ramification
number of $L/K$ is $-v_L(\alpha)=-v_K(a)$.  It is clear
from Galois theory that $L=(K^{sep})^H$.
\end{proof}

     The following is proved as Lemma~3.5 in
\cite{KS22}:

\begin{lemma} \label{keatings lemma}
Let $a\in K\smallsetminus \wp(K)$ be such that
$u=-v_K(a)$ satisfies $0<u<pe_K/(p-1)$. Let
$\alpha$ be a root of $f(X)=X^p-X-a$ and set
$L=K(\alpha)$.
\begin{enumerate}[(a)]
\item If $L/K$ is a totally ramified $C_p$-extension
then there is $\sigma\in \Gal(L/K)$ such that
$\sigma(\alpha)=\alpha+1+ \epsilon$ for some
$\epsilon\in L$ with
$v_K(\epsilon)\geq e_K-(1-p^{-1})u$.
\item If $p\nmid u$, then $L/K$ is a totally ramified
$C_p$-extension and
$v_K(\epsilon)=e_K-(1-p^{-1})u$.
\end{enumerate}
\end{lemma}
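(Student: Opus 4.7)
The plan is to use Hensel's lemma to locate a root of $f(X) = X^p - X - a$ in $L$ close to $\alpha + 1$. The key observation is that $\alpha + 1$ is already almost a root: cancelling $\alpha^p - \alpha - a$ in a binomial expansion gives
\[f(\alpha+1) = \sum_{k=1}^{p-1}\binom{p}{k}\alpha^k,\]
a sum whose coefficients are all divisible by $p$, so $f(\alpha+1)$ is small in $v_L$-norm.

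To quantify this I would first establish $v_L(\alpha) = -u$: since $L/K$ is totally ramified of degree $p$ and $v_L(a) = -pu$, a brief case analysis on $\alpha^p = \alpha + a$ forces this value (the alternatives $v_L(\alpha) \ge 0$ or $v_L(\alpha) < -u$ both produce contradictions). Combined with $v_L\bigl(\binom{p}{k}\bigr) = v_L(p) = pe_K$ for $1 \le k \le p-1$, each summand $\binom{p}{k}\alpha^k$ has valuation $pe_K - ku$; these $p-1$ integers are distinct (since $u > 0$), so the strong triangle inequality yields the sharp identity $v_L(f(\alpha+1)) = pe_K - (p-1)u$, which is positive by hypothesis. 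A parallel short computation gives $v_L(f'(\alpha+1)) = v_L(p(\alpha+1)^{p-1} - 1) = 0$, since the $-1$ dominates while $p(\alpha+1)^{p-1}$ has positive valuation. The Hensel condition $v_L(f(\alpha+1)) > 2 v_L(f'(\alpha+1)) = 0$ is then met, and Newton iteration starting at $\alpha + 1$ converges to a root $\beta \in L$ of $f$ with $v_L(\beta - (\alpha+1)) = pe_K - (p-1)u$ exactly, since the first Newton correction strictly dominates all later ones.

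Since $L/K$ is Galois with $f$ the minimal polynomial of $\alpha$, there is a unique $\sigma \in \Gal(L/K)$ sending $\alpha$ to $\beta$; setting $\epsilon = \beta - (\alpha+1)$ gives $\sigma(\alpha) = \alpha + 1 + \epsilon$ with $v_K(\epsilon) = e_K - (1-p^{-1})u$. This equality establishes both (a) (a fortiori, its weaker stated inequality) and (b): for (b), the hypothesis $p \nmid u$ automatically supplies the totally ramified $C_p$ condition via Proposition~\ref{Cp extension}, so the same argument applies. The main technical point is the sharp ultrametric identity $v_L(f(\alpha+1)) = pe_K - (p-1)u$, which rests on having pinned down $v_L(\alpha) = -u$ precisely and on the distinctness of the $p-1$ summand valuations; once these are verified, the Hensel--Newton step is routine.
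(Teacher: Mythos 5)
Your argument is correct, and it is worth noting that the paper itself does not prove this lemma: it simply cites Lemma~3.5 of \cite{KS22}. The proof there runs in the opposite direction from yours. One first produces a generator $\sigma$ of $\Gal(L/K)$ normalized (via \cite{MW56}) so that $(\sigma-1)\alpha\equiv1\pmod{\M_L}$, sets $\epsilon=(\sigma-1)\alpha-1$, and then extracts the valuation of $\epsilon$ from the identity $\wp(\sigma\alpha)=\wp(\alpha)$, i.e.\ $(\alpha+1+\epsilon)^p-(\alpha+1+\epsilon)=\alpha^p-\alpha$, using essentially the same binomial estimate that you use. Your route --- manufacturing the conjugate as the Newton--Hensel root of $f$ near $\alpha+1$ --- buys a little more: it yields the sharp equality $v_K(\epsilon)=e_K-(1-p^{-1})u$ already in case (a), without assuming $p\nmid u$, since nothing in the computation of $v_L(f(\alpha+1))$ uses that hypothesis. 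The supporting steps ($v_L(\alpha)=-u$ from the case analysis on $\alpha^p=\alpha+a$; the distinctness of the valuations $pe_K-ku$; $v_L(f'(\alpha+1))=0$; the existence and uniqueness of $\sigma$ with $\sigma(\alpha)=\beta$, and $\beta\neq\alpha$ because $v_L(\beta-\alpha)=0$) are all correct.

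The one step you should not dismiss as routine is the Hensel--Newton step itself: $f(X)=X^p-X-a$ does not lie in $\OO_L[X]$ and $\alpha+1\notin\OO_L$, so the textbook statement of Hensel's lemma does not apply verbatim, and quadratic convergence of Newton iteration is not automatic for a polynomial with non-integral Taylor coefficients. What rescues the argument is that after translating, $g(Y)=f(\alpha+1+Y)=f(\alpha+1)+f'(\alpha+1)Y+\sum_{j=2}^{p}\binom{p}{j}(\alpha+1)^{p-j}Y^j$ \emph{does} lie in $\OO_L[Y]$: for $2\le j\le p-1$ the coefficient has valuation $pe_K-(p-j)u>pe_K-(p-1)u>0$ by the hypothesis $u<pe_K/(p-1)$, and the top coefficient is $1$. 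With $g\in\OO_L[Y]$, $v_L(g(0))=pe_K-(p-1)u>0$ and $g'(0)$ a unit, the standard iteration applies, all corrections after the first have valuation at least $2\bigl(pe_K-(p-1)u\bigr)$, and you get $v_L\bigl(\beta-(\alpha+1)\bigr)=pe_K-(p-1)u$ exactly, hence $v_K(\epsilon)=e_K-(1-p^{-1})u$ as claimed. Adding this one verification makes the proof complete.
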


\begin{proposition} \label{Pequal}
Let $L$ be a local field with residue characteristic $p$
and let $\alpha\in L$ satisfy $v_L(\alpha)=-u$, with
$0<u<e_L/(p-1)$.  Let $\beta\in L$ be such that
$\wp(\alpha)\equiv\wp(\beta)\pmod{\M_L}$.  Then there is
$k\in\Z$ such that $\beta\equiv\alpha+k\pmod{\M_L}$.
\end{proposition}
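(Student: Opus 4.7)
The plan is to set $\delta = \beta - \alpha$ and show (i) $v_L(\delta) \geq 0$ and (ii) the residue $\overline{\delta}$ lies in the prime subfield $\F_p$ of the residue field of $L$; taking $k \in \Z$ with $\overline{k} = \overline{\delta}$ then gives $\beta \equiv \alpha + k \pmod{\M_L}$. The starting point is the binomial expansion
\[
\wp(\beta) - \wp(\alpha) = (\alpha+\delta)^p - \alpha^p - \delta = \delta^p - \delta + \sum_{j=1}^{p-1}\binom{p}{j}\alpha^{p-j}\delta^j,
\]
which by hypothesis lies in $\M_L$, so has $v_L \geq 1$. I also use that $v_L\bigl(\binom{p}{j}\bigr) = e_L$ for $1 \leq j \leq p-1$.

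For (i), suppose for contradiction that $v_L(\delta) = -d$ with $d > 0$. Then $v_L(\delta^p) = -pd$, $v_L(-\delta) = -d$, and for $1 \leq j \leq p-1$
\[
v_L\bigl(\tbinom{p}{j}\alpha^{p-j}\delta^j\bigr) = e_L - (p-j)u - jd.
\]
Certainly $-pd < -d$. To compare $-pd$ against $e_L - (p-j)u - jd$, I would rearrange to the equivalent inequality $(p-j)(u - d) < e_L$ and check it case by case: if $d \geq u$ the left side is non-positive, while if $d < u$ then $(p-j)(u-d) \leq (p-1)u < e_L$ by the hypothesis $u < e_L/(p-1)$. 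Hence $\delta^p$ is the strictly dominant term, forcing $v_L(\wp(\beta)-\wp(\alpha)) = -pd < 0$, a contradiction.

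For (ii), once $v_L(\delta) \geq 0$ is known, every cross term has valuation at least $e_L - (p-1)u$, which is strictly positive by the same hypothesis on $u$. Reducing the displayed identity modulo $\M_L$ therefore yields $\overline{\delta}^p - \overline{\delta} = 0$ in the residue field, so $\overline{\delta} \in \F_p$, and the conclusion follows.

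The main obstacle is the case analysis in step (i): the bound $u < e_L/(p-1)$ is exactly what is needed to make $\delta^p$ strictly dominate every cross term $\binom{p}{j}\alpha^{p-j}\delta^j$, regardless of the sign of $u - d$, and this is the only place where the upper bound on $u$ is used in an essential way.
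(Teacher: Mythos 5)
Your proposal is correct and follows essentially the same route as the paper: set $\delta=\beta-\alpha$, expand $\wp(\beta)-\wp(\alpha)$, rule out $v_L(\delta)<0$ by showing $\delta^p$ dominates, then reduce modulo $\M_L$ to get $\overline{\delta}^p=\overline{\delta}$ and hence $\overline{\delta}\in\F_p$. The only difference is that you spell out the case analysis $(p-j)(u-d)<e_L$ that the paper leaves as an assertion.
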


\begin{proof}
Set $\delta=\beta-\alpha$.  Then
\begin{equation} \label{Pbeta}
\wp(\beta)-\wp(\alpha)=-\delta+\sum_{i=1}^p
\binom{p}{i}\alpha^{p-i}\delta^i.
\end{equation}
If $v_L(\delta)<0$ then
$v_L(\delta^p)<v_L(\binom{p}{i}\alpha^{p-i}\delta^i)$ for
$1\le i\le p-1$ and $v_L(\delta^p)<v_L(\delta)$.  This
implies $v_L(\wp(\beta)-\wp(\alpha))=v_L(\delta^p)<0$, a
contradiction.  Hence $v_L(\delta)\ge0$.  It now follows
from (\ref{Pbeta}) and the assumption on $v_L(\alpha)$
that $0\equiv-\delta+\delta^p\pmod{\M_L}$.  Thus
$\delta\equiv k\pmod{\M_L}$ for some $k\in\Z$.
\end{proof}

     We will make frequent use of the following result:

\begin{proposition} \label{chisum}
Let $a,b\in K$ satisfy $v_K(b)\ge v_K(a)>-pe_K/(p-1)$
and $v_K(p^pa^{p-1}b)>0$.  Then
\begin{enumerate}[(a)]
\item $\chi_{a+b}^K=\chi_a^K+\chi_b^K$.
\item For each root $\alpha$ of $X^p-X-a$ and each root
$\beta$ of $X^p-X-b$ there is a unique root $\gamma$ of
$X^p-X-(a+b)$ such that
$\gamma\equiv\alpha+\beta\pmod{p\alpha^{p-1}\beta}$.
Furthermore, we have $\gamma\in K(\alpha,\beta)$.
\end{enumerate}
\end{proposition}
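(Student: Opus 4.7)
The approach is to locate $\gamma$ in part (b) by applying Hensel's lemma to $F(X) = X^p - X - (a+b)$ with initial approximation $X_0 = \alpha + \beta$, and then to deduce (a) by tracking $(\sigma-1)\gamma$ modulo $\M_K$.

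For (b), I would first expand
\[
F(\alpha+\beta) = \wp(\alpha+\beta) - (a+b) = \sum_{i=1}^{p-1}\binom{p}{i}\alpha^{p-i}\beta^i.
\]
The hypothesis $v_K(a) > -pe_K/(p-1)$ forces $v_K(\alpha), v_K(\beta) > -e_K/(p-1)$, so $v_K(p(\alpha+\beta)^{p-1}) > 0$ and $F'(\alpha+\beta) = p(\alpha+\beta)^{p-1} - 1$ is a unit. Since $v_K(b) \geq v_K(a)$, one has $v_K(\beta) \geq v_K(\alpha)$ whenever both are negative, and $v_K(\beta) \geq 0$ otherwise; in every case the $i = 1$ summand $p\alpha^{p-1}\beta$ attains the smallest valuation. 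The hypotheses together give $v_K(p\alpha^{p-1}\beta) > 0$, hence $v_K(F(\alpha+\beta)) > 0$, and the strong form of Hensel's lemma produces a unique root $\gamma \in K(\alpha,\beta)$ with
\[
v_K(\gamma - (\alpha+\beta)) \geq v_K(F(\alpha+\beta)) \geq v_K(p\alpha^{p-1}\beta),
\]
which is the desired congruence.

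For (a), I would fix $\sigma \in \G_K$ and set $m = \chi_a^K(\sigma)$, $n = \chi_b^K(\sigma)$, so that $(\sigma-1)\alpha \equiv m$ and $(\sigma-1)\beta \equiv n \pmod{\M_K}$. Writing $\gamma = \alpha + \beta + p\alpha^{p-1}\beta\mu$ with $\mu$ integral and applying $\sigma$ preserves valuations, giving $\sigma\gamma \equiv \sigma(\alpha+\beta) \pmod{p\alpha^{p-1}\beta}$, which is stronger than $\pmod{\M_K}$. Combining, $(\sigma-1)\gamma \equiv \sigma(\alpha+\beta) - (\alpha+\beta) \equiv m + n \pmod{\M_K}$, so $\chi_{a+b}^K(\sigma) = m + n$, proving (a).

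The principal bookkeeping obstacle is checking $v_K(p\alpha^{p-1}\beta) > 0$ in every case. When $v_K(a), v_K(b) < 0$ one has $v_K(\alpha) = v_K(a)/p$ and $v_K(\beta) = v_K(b)/p$, and this bound collapses to $v_K(p^p a^{p-1}b) > 0$. If however $v_K(b) \geq 0$ and one selects a root $\beta$ of $X^p - X - b$ with $v_K(\beta) = 0$, that equivalence is lost, and one must instead appeal to $v_K(a) > -pe_K/(p-1)$, which yields $v_K(p) + (p-1)v_K(\alpha) > 0$ on its own and suffices because $v_K(\beta) \geq 0$.
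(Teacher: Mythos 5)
Your argument is correct, but it takes a genuinely different route from the paper's. The paper proves (a) first by citing Proposition~5 of \cite{MW56}, invokes the proof of that same proposition for the existence and uniqueness of a root $\gamma$ with $v_K(\alpha+\beta-\gamma)>0$, sharpens the congruence to modulus $p\alpha^{p-1}\beta$ via the same expansion of $\wp(\alpha+\beta)-\wp(\gamma)$ that you use, and only then deduces $\gamma\in K(\alpha,\beta)$ \emph{from} part (a). You reverse the logical order: Hensel's lemma over the complete field $K(\alpha,\beta)$ delivers $\gamma$, its uniqueness, and its field of definition in one stroke, and (a) then falls out by applying $\sigma$ to the congruence $\gamma\equiv\alpha+\beta\pmod{p\alpha^{p-1}\beta}$. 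This is self-contained and arguably cleaner, since the paper's deduction of $\gamma\in K(\alpha,\beta)$ from the character identity requires the extra observation that distinct roots of $X^p-X-(a+b)$ are not congruent modulo the maximal ideal. Two small caveats. First, the textbook form of Hensel's lemma wants an integral polynomial and integral initial approximation, whereas $a+b$ and $\alpha+\beta$ may have negative valuation; the patch is to substitute $X\mapsto X+\alpha+\beta$ and note that the resulting coefficients $\binom{p}{k}(\alpha+\beta)^{p-k}$ for $k\ge2$ have positive valuation because $v_K(\alpha+\beta)>-e_K/(p-1)$, while the linear coefficient $p(\alpha+\beta)^{p-1}-1$ is a unit, as you observe. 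Second, your claim that the $i=1$ summand is dominant rests on $v_K(\beta)\ge v_K(\alpha)$; in the degenerate case $v_K(a)\ge0$ one could choose roots with $v_K(\alpha)>v_K(\beta)=0$, and then the congruence modulo $p\alpha^{p-1}\beta$ can fail --- but the paper's proof makes the same implicit assumption, and the proposition is only ever applied with $v_K(a)<0$, so this is a shared blemish rather than a defect of your approach.
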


\begin{proof}
(a) This is proved in Proposition~5 of \cite{MW56}.
\\[\medskipamount]
(b) By the proof of Proposition~5 in \cite{MW56} there
is a unique root $\gamma$ of $X^p-X-(a+b)$ such that
$\delta:=\alpha+\beta-\gamma$ satisfies $v_K(\delta)>0$.
As in the proof of Proposition~\ref{Pequal} we get
\begin{alignat*}{2}
-\delta+\sum_{i=1}^p\binom{p}{i}\gamma^{p-i}\delta^i
&=\wp(\alpha+\beta)-\wp(\gamma) \\
&\equiv\wp(\alpha)+\wp(\beta)-(a+b)
&&\pmod{p\alpha^{p-1}\beta} \\
&\equiv0&&\pmod{p\alpha^{p-1}\beta}.
\end{alignat*}
Hence $\delta\equiv0\pmod{p\alpha^{p-1}\beta}$.  It
follows from (a) that $\gamma\in K(\alpha,\beta)$.
\end{proof}

\begin{corollary} \label{refine}
Let $K,a,b,\alpha,\beta,\gamma$ satisfy the conditions
of Proposition~\ref{chisum}.  Then for all
$\sigma\in\Gal(K(\alpha,\beta)/K)$ we have
\[(\sigma-1)(\gamma)\equiv(\sigma-1)(\alpha)
+(\sigma-1)(\beta)\pmod{p\alpha^{p-1}}.\]
\end{corollary}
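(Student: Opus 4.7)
Set $\delta := \alpha + \beta - \gamma$, so that $v_L(\delta) \geq v_L(p\alpha^{p-1}\beta)$ by Proposition \ref{chisum}(b), where $L := K(\alpha,\beta)$. Applying $\sigma$ to $\gamma = \alpha + \beta - \delta$ and subtracting yields
\[(\sigma-1)\gamma = (\sigma-1)\alpha + (\sigma-1)\beta - (\sigma-1)\delta,\]
so the corollary is equivalent to the estimate $v_L((\sigma-1)\delta) \geq v_L(p\alpha^{p-1})$. I split into cases on the sign of $v_L(\beta)$.

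If $v_L(\beta) \geq 0$, then $v_L(\delta) \geq v_L(p\alpha^{p-1}\beta) \geq v_L(p\alpha^{p-1})$. Since $\sigma$ preserves valuation, the triangle inequality gives $v_L((\sigma-1)\delta) \geq v_L(\delta) \geq v_L(p\alpha^{p-1})$ immediately.

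If $v_L(\beta) < 0$, the hypothesis $v_K(a) \leq v_K(b)$ forces $v_L(\alpha) \leq v_L(\beta) < 0$, so $v_L(p\alpha^{p-1}) \leq v_L(p)$. Here I use a refinement of Proposition \ref{Pequal}: for each $\theta \in \{\alpha,\beta,\gamma\}$, writing $\eta := (\sigma-1)\theta$, the identity $\wp(\theta + \eta) = \wp(\theta)$ rearranges to
\[\eta^p - \eta = -\sum_{i=1}^{p-1}\binom{p}{i}\theta^{p-i}\eta^i.\]
Reducing modulo $\M_L$ forces $\bar\eta \in \F_p$; letting $i_\theta \in \Z$ lift $\bar\eta$ and substituting $\eta = i_\theta + \xi$, a valuation bootstrap yields $v_L(\xi) \geq \min(v_L(p), v_L(p\theta^{p-1}))$. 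Since $v_L(\alpha) \leq v_L(\beta)$ and $v_L(\gamma) \geq v_L(\alpha)$ (the latter because $v_L(\delta) > v_L(\alpha)$ under our hypotheses), we have $v_L(p\theta^{p-1}) \geq v_L(p\alpha^{p-1})$ for each $\theta$, so $(\sigma-1)\theta \equiv i_\theta \pmod{p\alpha^{p-1}}$. The integers $i_\alpha, i_\beta, i_\gamma$ represent the values $\chi_a^K(\sigma), \chi_b^K(\sigma), \chi_{a+b}^K(\sigma) \in \F_p$, so by Proposition \ref{chisum}(a), $i_\gamma \equiv i_\alpha + i_\beta \pmod p$. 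Therefore
\[(\sigma-1)\delta = ((\sigma-1)\alpha - i_\alpha) + ((\sigma-1)\beta - i_\beta) - ((\sigma-1)\gamma - i_\gamma) + (i_\alpha + i_\beta - i_\gamma),\]
and every summand has valuation at least $v_L(p\alpha^{p-1})$, using $v_L(p) \geq v_L(p\alpha^{p-1})$ for the final integer term.

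The main technical challenge is the bootstrap step giving $(\sigma-1)\theta \equiv i_\theta \pmod{p\theta^{p-1}}$, which is strictly stronger than the mod-$\M_L$ approximation in Proposition \ref{Pequal}. One must carefully iterate the Artin-Schreier equation, and in the degenerate subcase $v_L(\gamma) > v_L(\alpha)$ (possible when leading terms cancel in $\alpha + \beta$) settle for the weaker bound $v_L(\xi) \geq v_L(p)$, which still suffices since $v_L(p) \geq v_L(p\alpha^{p-1})$.
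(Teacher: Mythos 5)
Your proof is correct and follows essentially the same route as the paper's: the congruences $(\sigma-1)\theta\equiv i_\theta\pmod{p\theta^{p-1}}$ that you re-derive by bootstrapping the Artin--Schreier equation are exactly the content of Lemma~\ref{keatings lemma}, which the paper simply cites, and both arguments then combine the additivity $\chi_{a+b}^K=\chi_a^K+\chi_b^K$ from Proposition~\ref{chisum}(a) with the valuation comparisons $v(\alpha)\le v(\beta)$ and $v(\alpha)\le v(\gamma)$. The repackaging via $\delta=\alpha+\beta-\gamma$ and the separate case $v_L(\beta)\ge0$ are only cosmetic differences.
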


\begin{proof}
By Lemma~\ref{keatings lemma} there are $k,\ell,m\in\Z$
such that
\begin{alignat*}{2}
(\sigma-1)(\alpha)&\equiv k&&\pmod{p\alpha^{p-1}} \\
(\sigma-1)(\beta)&\equiv\ell&&\pmod{p\beta^{p-1}} \\
(\sigma-1)(\gamma)&\equiv m&&\pmod{p\gamma^{p-1}}.  
\end{alignat*}
Since $\chi_{a+b}^K(\sigma)=\chi_a^K(\sigma)
+\chi_b^K(\sigma)$ we have $m\equiv k+\ell\pmod{p}$.
Since $v_K(\alpha)\le v_K(\beta)$ and
$v_K(\alpha)\le v_K(\gamma)$, the corollary follows from
this.
\end{proof}

\begin{corollary} \label{congruent}
Let $a,a'\in K$ satisfy $v_K(a)>-pe_K/(p-1)$ and
$a\equiv a'\pmod{\M_K}$.  Then
\begin{enumerate}[(a)]
\item $\chi_a^K=\chi_{a'}^K$.
\item For each root $\alpha$ of $X^p-X-a$ there is a
root $\alpha'$ of $X^p-X-a'$ such that
$v_K(\alpha-\alpha')>0$.
\end{enumerate}
\end{corollary}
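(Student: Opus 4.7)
My plan is to reduce both parts to a single application of Proposition~\ref{chisum} with the decomposition $a' = a + b$, where $b := a' - a$ satisfies $v_K(b) \geq 1$ by hypothesis. A key preliminary observation is that $\chi_b^K$ is trivial: because $v_K(b) \geq 1$ and $\wp'(0) = -1$ is a unit in $\OO_K$, Hensel's lemma applied to $\wp(X) - b$ with initial approximation $X = 0$ produces $\beta \in \OO_K$ with $\wp(\beta) = b$ and $v_K(\beta) \geq 1$. Hence $b \in \wp(K)$ and $\chi_b^K = 0$.

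For part~(a), Proposition~\ref{chisum}(a) then yields $\chi_{a'}^K = \chi_a^K + \chi_b^K = \chi_a^K$, provided the hypothesis $v_K(p^p a^{p-1} b) > 0$ holds. This reduces to $pe_K + (p-1) v_K(a) + v_K(b) > 0$, which is immediate from $(p-1) v_K(a) > -pe_K$ and $v_K(b) \geq 1$. For part~(b), Proposition~\ref{chisum}(b) applied with $\beta$ as above produces, for the given root $\alpha$ of $X^p - X - a$, a root $\alpha'$ of $X^p - X - a'$ satisfying $\alpha' \equiv \alpha + \beta \pmod{p \alpha^{p-1} \beta}$. Since both $v_K(\beta) \geq 1$ and $v_K(p \alpha^{p-1} \beta) > 0$, this congruence gives $v_K(\alpha' - \alpha) > 0$ as required.

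\textbf{Main obstacle.} The only real friction is that Proposition~\ref{chisum} requires the element of smaller valuation to appear as its ``$a$'' argument; if $v_K(a) > v_K(b)$, I would simply swap the roles of $a$ and $b$ in that proposition. Part~(a) is symmetric in the two arguments so nothing changes, and for part~(b) one gets instead $\alpha' \equiv \beta + \alpha \pmod{p \beta^{p-1} \alpha}$, with the same conclusion after noting that in the swap case $v_K(a) > v_K(b) \geq 1$ forces $v_K(\alpha) \geq 0$ and hence $v_K(p \beta^{p-1} \alpha) > 0$. Apart from this bit of bookkeeping, the proof is a routine verification of the inequalities in Proposition~\ref{chisum}.
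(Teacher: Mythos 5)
Your proof is correct and follows essentially the same route as the paper's: both write $a'=a+b$ with $v_K(b)>0$, observe that $\chi_b^K$ is trivial, and then invoke Proposition~\ref{chisum}. The paper's version is a two-line sketch, whereas you fill in the details it leaves implicit (the Hensel's lemma argument for $b\in\wp(K)$, the verification of the valuation hypotheses of Proposition~\ref{chisum}, and the bookkeeping when $v_K(a)>v_K(b)$), all of which check out.
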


\begin{proof}
We have $a'=a+b$ with $v_K(b)>0$.  Since $\chi_b$ is the
trivial character, the corollary follows.
\end{proof}

\begin{proposition} \label{equalbreaks}
Let $u<e_K$ be a positive integer.  Choose
$a_1,\dots,a_r\in\M_K^{-u}$ such that
\[\Sbar=\{a_i+\M_K^{-u+1}:1\leq i\leq r\}\]
is an $\F_p$-linearly independent subset of
$\M_K^{-u}/\M_K^{-u+1}$.  For $1\le i\le r$ let
$\alpha_i\in K^{sep}$ be a root of $X^p-X-a_i$.  Then
$L=K(\alpha_1,\dots,\alpha_r)$ is an elementary abelian
$p$-extension of $K$ of rank $r$, with a single upper
and lower ramification number $u$ of multiplicity $r$.
\end{proposition}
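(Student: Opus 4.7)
The plan is to show three things in turn: (i) $[L:K]=p^r$ with $\Gal(L/K)\cong\F_p^r$; (ii) every $C_p$-subextension of $L/K$ has upper ramification number $u$; (iii) these together pin down the full ramification filtration of $L/K$.

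For (i), I will analyze the Artin-Schreier characters $\chi_{a_i}^K\colon\G_K\to\F_p$. The bound $u<e_K<pe_K/(p-1)$ places each $v_K(a_i)=-u$ in the valid range for Proposition~\ref{Cp extension}, and also gives $v_K(p^p a^{p-1} b)\ge pe_K-pu>0$ whenever $v_K(a),v_K(b)\ge-u$, so Proposition~\ref{chisum}(a) iterates to $\sum_i c_i \chi_{a_i}^K = \chi_{\sum_i c_i a_i}^K$ for all $(c_i)\in\F_p^r$. The $\F_p$-linear independence of $\Sbar$ forces $v_K(\sum_i c_i a_i)=-u$ for every nonzero $(c_i)$, and the valuation computation from Proposition~\ref{Cp extension} then rules out $\sum_i c_i a_i\in\wp(K)$. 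Hence the $\chi_{a_i}^K$ are $\F_p$-linearly independent, their common kernel has fixed field $L=K(\alpha_1,\dots,\alpha_r)$, and $\Gal(L/K)\cong\F_p^r$.

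For (ii), each $C_p$-subextension $M/K$ of $L/K$ is $L^{\ker\chi}$ for some nontrivial character $\chi$ of $\Gal(L/K)$, which pulls back to $\chi_a^K$ with $a=\sum_i c_i a_i$ and $(c_i)\neq 0$. Proposition~\ref{chisum}(b) produces a root $\gamma\in L$ of $X^p-X-a$, and Proposition~\ref{Cp extension} identifies $M=K(\gamma)$ as a totally ramified $C_p$-extension with upper ramification number $-v_K(a)=u$. For (iii), set $G=\Gal(L/K)$; the break of $x\mapsto G^x\ker\chi/\ker\chi$ is $u$ for every nontrivial character $\chi$, so $G^{u+\epsilon}\subseteq\bigcap_{\chi\neq 0}\ker\chi=\{1\}$ while $G^u$ is contained in no proper subgroup of $G$, forcing $G^u=G$. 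The upper filtration therefore jumps from $G$ straight to $\{1\}$ at $u$, so $u_1=\cdots=u_r=u$, and the relation $b_{i+1}-b_i=p^i(u_{i+1}-u_i)$ then yields $b_1=\cdots=b_r=u$.

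The main obstacle is confirming $\sum_i c_i a_i\notin\wp(K)$ for nontrivial $(c_i)$ in step (i). When $p\nmid u$ this follows immediately from $v_K(\wp(b))=pv_K(b)$ for $v_K(b)<0$, whose values do not include $-u$; the case $p\mid u$ would require a more delicate argument, for instance combining the residue-field independence of the $\bar{a}_i$ with a valuation analysis of $\wp$ on elements of $K$ of valuation $-u/p$.
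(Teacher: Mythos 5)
Your proof is correct and takes essentially the same route as the paper's: additivity of the Artin--Schreier characters via Proposition~\ref{chisum}(a) to produce $p^r$ distinct characters of $\Gal(L/K)$, the observation that linear independence of $\Sbar$ forces $v_K(\sum_i c_ia_i)=-u$ for nonzero $(c_i)$ so that every $C_p$-subextension has upper ramification number $u$, and the deduction that the upper (hence lower) filtration has a single jump at $u$ of multiplicity $r$.

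Concerning the obstacle you flag at the end: you are right that concluding $\sum_ic_ia_i\notin\wp(K)$ from $v_K(\sum_ic_ia_i)=-u$ requires $p\nmid u$, and the paper's own proof relies on exactly the same step without comment. However, no more delicate argument will rescue the case $p\mid u$, because the statement is actually false there. For example, take $r=1$, $u=p<e_K$, and $a_1=\pi_K^{-p}$; then $\Sbar$ is linearly independent, yet $a_1=\wp(\pi_K^{-1})+\pi_K^{-1}$, so Proposition~\ref{chisum}(a) gives $\chi_{a_1}^K=\chi_{\pi_K^{-1}}^K$, and $K(\alpha_1)/K$ is a ramified $C_p$-extension with ramification number $1$ rather than $p$. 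Thus the proposition carries an implicit hypothesis $p\nmid u$. This is harmless for the paper, since the result is only ever applied to reduced Artin--Schreier constants, whose defining condition (ii) guarantees $p\nmid u$; your argument, like the paper's, is complete in that case.
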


\begin{proof}
Clearly $L/K$ is an elementary abelian $p$-extension of
degree $p^s$ for some $s\le r$.  Let
$n_1,\dots,n_r\in\Z$ and set $b=n_1a_1+\dots+n_ra_r$.
Then $\chi_b^K=n_1\chi_{a_1}^K+\dots+n_r\chi_{a_r}^K$ by
Proposition~\ref{chisum}(a) and the bound on $u$.
Therefore $\chi_b^K$ induces a character from
$\Gal(L/K)$ to $\F_p$.  If $n_1',\dots,n_r'$ are
integers such that $b'=n_1'a_1+\dots+n_r'a_r$ satisfies
$\chi_{b'}^K=\chi_b^K$ then $\chi_{b'-b}^K$ is trivial.
Hence $b'\equiv b\pmod{\M_K^{-u}}$, so by the
independence of $\Sbar$ we get $n_i'\equiv n_i\pmod{p}$
for $1\le i\le r$.  It follows that there are $p^r$
distinct characters from $\Gal(L/K)$ to $\F_p$, so we
must have $[L:K]=p^r$.  If $\chi_b^K$ is nontrivial then
$p\nmid n_i$ for at least one $i$, so $v_K(b)=-u$ by the
independence of $\Sbar$.  Hence the $C_p$-subextension
of $L/K$ corresponding to $\chi_b^K$ has upper
ramification number $u$.  Therefore the only upper
ramification number of $L/K$ is $u$, which has
multiplicity $r$.  It follows that the only lower
ramification number of $L/K$ is $u$, again with
multiplicity $r$.
\end{proof}

     Motivated by these results we make the following
definition:

\begin{definition}
Let $K$ be a local field with residue characteristic
$p$.  We say $a_1,\dots,a_n\in K$ are reduced
Artin-Schreier constants if all of the following hold:
\begin{enumerate}[(i)]
\item $-pe_K/(p-1)<v_K(a_n)\leq\dots\leq
v_K(a_1)<0$.
\item $p\nmid v_K(a_i)$ for each $1\leq i\leq n$.
\item If $v_K(a_i)=v_K(a_{i+1})=\dots=v_K(a_j)=-u$
then $\{a_h+\M_K^{-u+1}:i\leq h\leq j\}$ is an
$\F_p$-linearly independent subset of
$\M_K^{-u}/\M_K^{-u+1}$.
\item $v_K\left(p^p a_n^{p-1}a_{n-1}\right)>0$.
\end{enumerate}
\end{definition}

\begin{proposition} \label{ele ab}
Let $a_1,\dots,a_n\in K$ be reduced Artin-Schreier
constants.  For $1\leq i\leq n$ set $u_i=-v_K(a_i)$ and
let $\alpha_i\in K^{sep}$ be a root of $X^p-X-a_i$.  Set
$L=K(\alpha_1,\dots,\alpha_n)$.  Then $L/K$ is a totally
ramified $C_p^n$-extension with upper ramification
numbers $u_1\le\dots\le u_n$.
\end{proposition}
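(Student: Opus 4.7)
The plan is to reduce the proposition to the $\F_p$-linear independence of the characters $\chi_{a_1}^K,\ldots,\chi_{a_n}^K$ in $\mathrm{Hom}(\G_K,\F_p)$ and then extract the ramification data from that. The key preparatory observation is that condition (iv), combined with $u_n\ge u_{n-1}$, gives $pu_{n-1}\le(p-1)u_n+u_{n-1}<pe_K$, so $u_i<e_K$ for every $i\le n-1$. This lets Proposition~\ref{chisum}(a) be iterated freely on pairs of $a_i$'s with $i,j<n$; for any pair that involves $a_n$ the missing hypothesis $v_K(p^p a_n^{p-1}b)>0$ is exactly what condition (iv) supplies whenever $v_K(b)\ge-u_{n-1}$.

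The central technical identity I would establish is
\[\chi_c^K=\sum_{i=1}^n c_i\chi_{a_i}^K\quad\text{for }c=\sum_{i=1}^n c_i a_i,\]
valid for integer lifts $c_i\in\{0,1,\ldots,p-1\}$ subject to the additional restriction $c_n\in\{0,1\}$. The proof is by iterated application of Proposition~\ref{chisum}(a), peeling off summands one at a time in order of increasing valuation (beginning with $a_n$ when $c_n=1$); the hypothesis check for each pair reduces to $u_i<e_K$ for pairs among $a_1,\ldots,a_{n-1}$ and to condition (iv) for pairs involving $a_n$. The restriction $c_n\in\{0,1\}$ is essential: expanding $c_n a_n$ as $a_n+\cdots+a_n$ and iterating Proposition~\ref{chisum}(a) on those copies would demand $u_n<e_K$, a bound the hypotheses do not provide.

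Linear independence follows at once. Given a putative relation $\sum c_i\chi_{a_i}^K=0$ with $(c_1,\ldots,c_n)\in\F_p^n\setminus\{0\}$, rescale the relation by $c_n^{-1}$ (if $c_n\ne0$) so that $c_n\in\{0,1\}$. The identity then gives $\chi_c^K=0$, hence $c\in\wp(K)$. On the other hand, if $U=\max\{u_i:c_i\ne 0\}$, condition (iii) (applied to the residue of $c$ modulo $\M_K^{-U+1}$, where the terms with $u_i<U$ vanish and the remaining $\F_p$-combination of those $a_i$ with $u_i=U$ is nontrivial) forces $v_K(c)=-U$; since $p\nmid U$ by (ii), this contradicts $c\in\wp(K)$.

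The remaining structural conclusions follow formally. The compositum $L=K(\alpha_1,\ldots,\alpha_n)$ is Galois and abelian of exponent dividing $p$ (each $K(\alpha_i)/K$ is a $C_p$-extension by Proposition~\ref{Cp extension}), and linear independence forces $|\Gal(L/K)|=p^n$, so $\Gal(L/K)\cong C_p^n$. Any nontrivial character $\chi$ of $\Gal(L/K)$ can, after rescaling to $c_n\in\{0,1\}$, be written as $\chi_c^K|_{\Gal(L/K)}$ with $v_K(c)=-U<0$ and $p\nmid U$; Proposition~\ref{Cp extension} then identifies its associated $C_p$-subextension as totally ramified with upper ramification number $U>0$, whence $L/K$ has no nontrivial unramified subextension and is itself totally ramified. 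Finally, parametrizing the characters of $G=\Gal(L/K)$ by tuples $(c_1,\ldots,c_n)\in\F_p^n$ and using the quotient compatibility $(G/H)^x=G^xH/H$ recalled in Section~2 to see that $\chi|_{G^x}=0$ iff $\max\{u_i:c_i\ne0\}<x$, abelian duality yields $|G^x|=p^{|\{i\,:\,u_i\ge x\}|}$, a step function that jumps at each distinct $u_i$ with the correct multiplicity. The upper ramification numbers of $L/K$ are therefore exactly $u_1\le\cdots\le u_n$. The main obstacle, handled jointly by the scaling trick and condition (iv), is that $u_n$ is bounded only by $u_n<pe_K/(p-1)$, never by $u_n<e_K$.
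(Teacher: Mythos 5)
Your proof is correct, but it takes a genuinely different route from the paper's. The paper partitions $a_1,\dots,a_n$ into blocks of constant valuation, applies Proposition~\ref{equalbreaks} to each block to obtain an elementary abelian subextension $L_j/K$ with a single upper ramification number $v_j$ of multiplicity $m_j$, and then concludes by observing that the compositum $L_1\cdots L_t$ has degree $p^n$ (hence equals $L$) and carries the union of the blocks' ramification data. You instead work with the whole character group at once: you prove $\F_p$-linear independence of $\chi_{a_1}^K,\dots,\chi_{a_n}^K$ directly from Proposition~\ref{chisum}(a) together with conditions (ii)--(iv), and then read off the entire upper ramification filtration by counting the characters killed by $G^x$. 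Two features of your argument are worth highlighting. First, your observation that (iv) forces $u_i<e_K$ only for $i\le n-1$, and the resulting normalization $c_n\in\{0,1\}$, addresses a real issue: $u_n$ can exceed $e_K$ (only a singleton top block can do this, since (iv) forces $u_n<e_K$ whenever $u_{n-1}=u_n$), and in that situation Proposition~\ref{equalbreaks} does not literally apply to the top block, so the paper is implicitly falling back on Proposition~\ref{Cp extension} there; your rescaling trick handles this uniformly. Second, your duality computation $|G^x|=p^{|\{i:u_i\ge x\}|}$ makes explicit the step the paper leaves implicit, namely why the compositum has exactly the multiset union of the constituents' upper ramification numbers. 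The cost is a longer argument; the benefit is that everything reduces cleanly to Propositions~\ref{Cp extension} and \ref{chisum} plus the quotient compatibility of the upper numbering.
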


\begin{proof}
Let $v_1<\dots<v_t$ be the distinct elements of the
multiset $\{u_i:1\le i\le n\}$ and let $m_j$ be the
multiplicity of $v_j$.  It follows from
Proposition~\ref{equalbreaks} that for $1\le j\le t$,
$L/K$ has a subextension $L_j/K$ which is elementary
abelian of rank $m_j$ and whose only upper ramification
number is $v_j$, with multiplicity $m_j$.  It follows
that $L_1L_2\dots L_t/K$ has upper ramification numbers
$v_1,\dots,v_t$ with multiplicities $m_1,\dots,m_t$.
Since $m_1+\dots+m_t=n$ we get $L=L_1L_2\dots L_t$.
\end{proof}

     It follows from Lemma~\ref{keatings lemma} that for
each $1\le i\le n$ there is
$\gamma_i\in\Gal(K(\alpha_i)/K)$ such that
$\gamma_i(\alpha_i)=\alpha_i+1+\delta_i$, where
$v_K(\delta_i)=v_K(p\alpha_i^{p-1})>0$.  We may extend
$\gamma_i$ to an automorphism of $L$ by setting
\begin{equation} \label{can gen}
\gamma_i(\alpha_j)=\begin{cases}
\alpha_j & \mbox{ if }  j\neq i, \\
\alpha_i+1+\delta_i & \mbox{ if }  j=i.
\end{cases}
\end{equation}

     Define
\[D(X,Y)=\frac{X^p+Y^p-(X+Y)^p}{p}=
-\sum_{i=1}^{p-1}p^{-1}\binom{p}{i}X^iY^{p-i}.\]
Then $D(X,Y)\in\Z[X,Y]$ and
\[S_1(X_0,X_1,Y_0,Y_1)=X_1+Y_1+D(X_0,Y_0)\]
is the second Witt addition polynomial.

\begin{proposition} \label{Cp2}
Let $a_1,a_2\in K$ and set $u_i=-v_K(a_i)$ for $i=1,2$.
Assume that $p\nmid u_1$, $(p-p^{-1})u_1<e_K$, and
$(1-p^{-1}+p^{-2})u_1+(1-p^{-1})u_2<e_K$.  Let
$\alpha_1,\alpha_2\in K^{sep}$ satisfy
$\alpha_1^p-\alpha_1=a_1$ and
$\alpha_2^p-\alpha_2=D(\alpha_1,a_1)+a_2$.  Set
$E=K(\alpha_1,\alpha_2)$.  Then
\begin{enumerate}[(a)]
\item $E/K$ is a $C_{p^2}$-extension.
\item If $u_2>pu_1$ and $p\nmid u_2$ then the upper
ramification numbers of $E/K$ are $u_1,u_2$.
\item If $u_2>(p+1-p^{-1})u_1$ and $p\nmid u_2$ then
there is a generator $\theta$ for $\Gal(E/K)$ such that
$\theta(\alpha_1)\equiv\alpha_1+1\pmod{p\alpha_1^{p-1}}$
and $\theta(\alpha_2)\equiv\alpha_2+D(1,\alpha_1)
\pmod{\M_E}$.
\end{enumerate}
\end{proposition}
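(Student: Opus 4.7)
The plan is to handle the three parts sequentially. For part (a), I would first apply Proposition~\ref{Cp extension} to $K_1 := K(\alpha_1)$: the hypothesis $(p - p^{-1})u_1 < e_K$ gives $u_1 < pe_K/(p-1)$, which together with $p \nmid u_1$ yields that $K_1/K$ is totally ramified $C_p$ with ramification number $u_1$. By Lemma~\ref{keatings lemma} I may fix a generator $\theta_0 \in \Gal(K_1/K)$ with $\theta_0(\alpha_1) = \alpha_1 + 1 + \epsilon_1$, $v_K(\epsilon_1) \ge e_K - (1-p^{-1})u_1$, and lift it to $\theta \in \Gal(K^{sep}/K)$. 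To show $\theta$ preserves $E$, I seek $\gamma \in E$ with $\theta(\alpha_2) = \alpha_2 + \gamma$; using $\wp(A+B) = \wp(A) + \wp(B) - pD(A, B)$, this requires
\[\wp(\gamma) - pD(\alpha_2, \gamma) = D(\theta(\alpha_1), a_1) - D(\alpha_1, a_1).\]
The key polynomial identity
\[D(X + 1, Y) - D(X, Y) = D(X+Y, 1) - D(X, 1),\]
obtained by expanding $(X+1)^p = X^p + 1 - pD(X, 1)$ and $(X+1+Y)^p = (X+Y)^p + 1 - pD(X+Y, 1)$ in the definition of $D$, combined with $\alpha_1^p = \alpha_1 + a_1$ and the Witt--Frobenius congruence $D(X^p, 1) \equiv D(X, 1)^p \pmod p$ (a consequence of Frobenius being a ring homomorphism on $\F_p$-polynomials), yields
\[D(\alpha_1 + 1, a_1) - D(\alpha_1, a_1) = \wp(D(\alpha_1, 1)) + p g(\alpha_1)\]
for some $g \in \Z[X]$. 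Setting $\gamma = D(1, \alpha_1) + \eta$, I reduce to solving an Artin--Schreier equation for $\eta$ whose right-hand side (comprising $p g(\alpha_1)$, an $\epsilon_1$-Taylor term, and $pD(\alpha_2, \cdot)$ corrections) has positive $v_E$-valuation under the hypotheses of (a). Since $\wp : \M_E \to \M_E$ is bijective by a Hensel-type argument, this produces $\eta \in \M_E$, so $\theta$ preserves $E$ and $E/K$ is Galois. To show $\theta$ has order $p^2$ rather than $p$, I iterate: the dominant part of $\theta^p(\alpha_2) - \alpha_2$ is the telescoping sum
\[\sum_{i=0}^{p-1} D(\alpha_1 + i, 1) = \frac{\alpha_1^p - (\alpha_1 + p)^p}{p} + 1 \equiv 1 \pmod{\M_E},\]
the congruence following from $v_E(p\alpha_1^{p-1}) = p^2 e_K - p(p-1)u_1 > 0$. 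Hence $\theta^p(\alpha_2) \not\equiv \alpha_2 \pmod{\M_E}$, forcing $\theta^p \ne \id$, so $\Gal(E/K) \cong C_{p^2}$.

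For part (b), the first upper ramification number of $E/K$ is $u_1$, inherited from $K_1/K$. For the second, I analyze $E/K_1$ as a $C_p$-extension defined by $\wp(\alpha_2) = a_2' := D(\alpha_1, a_1) + a_2$. Under $u_2 > pu_1$, the $a_2$ term dominates since $v_{K_1}(a_2) = -pu_2 < -(p^2 - p + 1)u_1 = v_{K_1}(D(\alpha_1, a_1))$, giving $v_{K_1}(a_2') = -pu_2$. Since this valuation is divisible by $p$, I iteratively subtract elements of $\wp(K_1)$ to cancel successive $p$-divisible leading terms, using the perfect residue field of $K_1$ to extract the required $p$-th roots. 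Tracking the iteration carefully, the reduced form of $a_2'$ modulo $\wp(K_1)$ has valuation $-(pu_2 - (p-1)u_1)$, which is coprime to $p$ by $p \nmid u_1$ and $p \nmid u_2$. Proposition~\ref{Cp extension} applied over $K_1$ then gives this as the ramification number of $E/K_1$; Herbrand's theorem translates it to upper ramification $u_1 + p^{-1}(pu_2 - pu_1) = u_2$ for $E/K$.

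For part (c), the congruence $\theta(\alpha_1) \equiv \alpha_1 + 1 \pmod{p\alpha_1^{p-1}}$ is immediate from Lemma~\ref{keatings lemma}, since $v_K(\epsilon_1) = e_K - (1-p^{-1})u_1 = v_K(p\alpha_1^{p-1})$ by the equality case (applicable as $p \nmid u_1$). The congruence $\theta(\alpha_2) \equiv \alpha_2 + D(1, \alpha_1) \pmod{\M_E}$ is exactly $\eta \in \M_E$, which follows from the iterative construction of (a); the stronger hypothesis $u_2 > (p+1-p^{-1})u_1$ provides additional slack in the valuation estimates, in particular for the cross-term $pD(\alpha_2, D(1, \alpha_1))$ whose leading valuation in $v_E$ is $-p(p-1)(u_1 + u_2) + p^2 e_K$, ensuring the solution for $\eta$ lies cleanly in $\M_E$. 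The principal obstacle I anticipate lies in part (a): carefully verifying the existence of $\eta \in \M_E$ through the iterative construction, and demonstrating $\theta^p \ne \id$ by tracking the Witt-vector main terms against the accumulated mixed-characteristic corrections from $\epsilon_1$ and the $pg(\alpha_1)$ remainder.
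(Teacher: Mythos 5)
The paper does not prove this proposition in situ: part (a) is quoted from the remark in Section~3 of \cite{VZ95} or Theorem~2.1 of \cite{KS22}, and parts (b) and (c) from Propositions~3.4 and 3.6 of \cite{KS22}, together with the observation that the hypothesis $a_2=a_1\mu^p$ imposed there is never used.  Your write-up is therefore a self-contained reconstruction rather than a genuinely different method: the identity $D(X+1,Y)-D(X,Y)=D(X+Y,1)-D(X,1)$ combined with $\alpha_1+a_1=\alpha_1^p$ and the Frobenius congruence $D(X^p,1)\equiv D(X,1)^p\pmod p$, the telescoping sum $\sum_{i=0}^{p-1}D(\alpha_1+i,1)=1+p^{-1}(\alpha_1^p-(\alpha_1+p)^p)$ forcing $\theta^p\neq\id$ (which simultaneously shows $\alpha_2\notin K_1$, so $[E:K]=p^2$), and the reduction of $D(\alpha_1,a_1)+a_2$ modulo $\wp(K_1)$ to locate the break of $E/K_1$ are exactly the standard Witt-vector computations underlying the cited proofs.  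Your valuation bookkeeping is consistent with the hypotheses: $(p-p^{-1})u_1<e_K$ is precisely what makes the $\epsilon_1$-Taylor correction to $D(\theta(\alpha_1),a_1)$ integral, and the cross terms $pD(\alpha_2,\cdot)$ are controlled by $(1-p^{-1}+p^{-2})u_1+(1-p^{-1})u_2<e_K$.  Two spots need a bit more than you wrote.  In (c), your construction only exhibits $\alpha_2+D(1,\alpha_1)+\eta$ as \emph{some} root of $X^p-X-(D(\theta(\alpha_1),a_1)+a_2)$, whereas $\theta(\alpha_2)$ is another root; by Proposition~\ref{Pequal} they differ by an integer modulo $\M_E$, so a priori $\theta(\alpha_2)\equiv\alpha_2+D(1,\alpha_1)+k\pmod{\M_E}$ with $k\in\Z$, and you must replace $\theta$ by $\theta^{1+pj}$ for a suitable $j$ (using $\theta^p(\alpha_2)\equiv\alpha_2+1$) to kill $k$ --- this is why the statement promises only \emph{some} generator.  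In (b), each subtraction of $\wp(w)$ in characteristic $0$ introduces a cross term $pD(\alpha_2,w)$, of valuation $pe_K-pu_2$ in $v_{K_1}$, which must be checked to exceed $-(pu_2-(p-1)u_1)$ (it does, since $e_K>(1-p^{-1})u_1$); and the asserted value $-(pu_2-(p-1)u_1)$ of the reduced constant arises from the term $s\,a_1^{s-1}\alpha_1$ in the expansion of $w^p$ with $w$ a unit times $\alpha_1^s\pi_K^t$, where $p\nmid s$ because $p\nmid u_2$ --- this is the computation that ``tracking the iteration carefully'' has to deliver.  Neither point is a flaw in the approach; both are routine to repair.
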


\begin{proof}
For (a) see the remark in Section~3 of \cite{VZ95}, or
Theorem~2.1 in \cite{KS22}.  For (b) see Proposition~3.4
in \cite{KS22}, and for (c) see Proposition~3.6 in
\cite{KS22}.  Propositions~3.4 and 3.6 in \cite{KS22}
are proved under the hypothesis that $a_2=a_1\mu^p$ for
some $\mu\in K^{\times}$.  However, this assumption is
not actually used in the proofs.
\end{proof}

\section{Generalized Heisenberg extensions}
\label{hi de he}

There are two families of extraspecial $p$-groups: The
generalized Heisenberg groups, which have exponent $p$,
and the generalized metacyclic groups, which have
exponent $p^2$.  In this section we define the
generalized Heisenberg group $H(n)$ of order $p^{2n+1}$.
We then show how to construct totally ramified
$H(n)$-extensions of local fields with residue
characteristic $p$.  In Section~\ref{hi met section} we
will define the generalized metacyclic groups $M(n)$ and
construct totally ramified $M(n)$-extensions.  In
Section~\ref{build} we will use the results of this
section to construct generalized Heisenberg extensions
which have a Galois scaffold.

\begin{definition} \label{heis def}
Let $p$ be an odd prime.  For $n\ge1$ we define a group
$H(n)$ of order $p^{2n+1}$ generated by
$g_1,\ldots,g_{2n},g_{2n+1}$, with $|g_i|=p$ for
$1\le i\le2n+1$.  All these generators commute with each
other, except for $g_i$ and $g_{n+i}$, which satisfy
$[g_i,g_{n+i}]=g_{2n+1}$ for $1\le i\le n$.  Thus $H(1)$
is the Heisenberg $p$-group and $H(n)$ is an
extraspecial $p$-group with exponent $p$.
\end{definition}

\begin{proposition} \label{hi he}
Let $K_0$ be a local field with residue characteristic
$p>2$.  Let $a_1,\dots,a_{2n},a_{2n+1}$ be elements of
$K_0$ such that $a_1,\dots,a_{2n}$ are reduced
Artin-Schreier constants.  Set $u_i=-v_0(a_i)$ for
$1\le i\le2n+1$ and assume that
\begin{align}
u_n+(1-p^{-1})u_{2n}&<e_0, \label{hineq1} \\
p^{-1}u_n+p^{-2}u_{2n}+(1-p^{-1})u_{2n+1}&<e_0.
\label{hineq3}
\end{align}
For $1\le i\le2n$ let $\alpha_i$ satisfy
$\alpha_i^p-\alpha_i=a_i$ and define $K_1,\dots,K_{2n}$
recursively by $K_i=K_{i-1}(\alpha_i)$.  Set
\[B=a_1\alpha_{n+1}+a_2\alpha_{n+2}+\dots
+a_n\alpha_{2n},\]
let $\alpha_{2n+1}$ satisfy
$\alpha_{2n+1}^p-\alpha_{2n+1}=B+a_{2n+1}$, and define
$K_{2n+1}=K_{2n}(\alpha_{2n+1})$.  Then
\begin{enumerate}[(a)]
\item $K_{2n+1}/K_0$ is a totally ramified
$H(n)$-extension.  In addition, for $1\le i\le2n+1$
there are $\sigma_i\in\Gal(K_{2n+1}/K_{i-1})$ such that
$\sigma_i|_{K_i}$ generates $\Gal(K_i/K_{i-1})$, with
the following properties:
\begin{alignat}{3}
\sigma_i(\alpha_j)&=\alpha_j&&&&
\text{ for $1\le i<j\le2n$}, \label{hshift1} \\
\sigma_i(\alpha_i)&\equiv\alpha_i+1
&&\pmod{p\alpha_i^{p-1}}&&\text{ for $1\le i\le2n+1$},
\label{hshift2} \\
\sigma_i(\alpha_{2n+1})&=\alpha_{2n+1}
&&&&\text{ for $1\le i\le n$}, \label{hshift3} \\
\sigma_{n+i}(\alpha_{2n+1})&\equiv
\alpha_{2n+1}+\alpha_i&&\pmod{\M_{2n+1}}\;
&&\text{ for $1\le i\le n$}, \label{hshift4}.
\end{alignat}
\item If $u_{2n+1}\le u_n+u_{2n}$ then there is
$v\le u_n+u_{2n}$ such that the upper ramification
numbers of $K_{2n+1}/K_0$ are $u_1,\dots,u_{2n},v$ (but
not necessarily in that order).
\item Suppose $u_{2n+1}>u_n+u_{2n}$ and
$p\nmid u_{2n+1}$.  Then the upper ramification numbers
of $K_{2n+1}/K_0$ are $u_1\le\dots\le u_{2n}<u_{2n+1}$,
and $v_0(\alpha_i)=-p^{-1}u_i$ for $1\le i\le2n+1$.
\end{enumerate}
\end{proposition}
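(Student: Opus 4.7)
The plan is to prove (a) by building group-theoretic structure on top of the understood tower $K_{2n}/K_0$, then to read off (b) and (c) from valuation estimates on $B+a_{2n+1}$.

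Part (a) has three subgoals. First, by Proposition~\ref{ele ab}, $K_{2n}/K_0$ is totally ramified $C_p^{2n}$ with upper ramification numbers $u_1,\dots,u_{2n}$, and Lemma~\ref{keatings lemma} together with (\ref{can gen}) furnishes $\sigma_1,\dots,\sigma_{2n}\in\Gal(K_{2n}/K_0)$ satisfying (\ref{hshift1}) and the first $2n$ cases of (\ref{hshift2}). Second, I show $K_{2n+1}/K_{2n}$ is a proper $C_p$-extension: each summand $a_j\alpha_{n+j}$ of $B$ has non-integer $v_0$-valuation $-u_j-u_{n+j}/p$ so $B\notin\wp(K_0)$, and (\ref{hineq1})--(\ref{hineq3}) keep $v_{2n}(B+a_{2n+1})$ inside the Artin-Schreier range $(-pe_{2n}/(p-1),0)$. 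Third, I extend each $\sigma_i$ to $K_{2n+1}$: for $1\le i\le n$, $\sigma_i$ fixes $B$ (which involves only $\alpha_{n+1},\dots,\alpha_{2n}$), so setting $\sigma_i(\alpha_{2n+1})=\alpha_{2n+1}$ gives (\ref{hshift3}). For $i=n+k$ with $1\le k\le n$, write $\eta=\sigma_{n+k}(\alpha_{n+k})-\alpha_{n+k}\equiv 1\pmod{p\alpha_{n+k}^{p-1}}$, so $\sigma_{n+k}(B)-B=a_k\eta$; setting $\sigma_{n+k}(\alpha_{2n+1})=\alpha_{2n+1}+\delta$ and expanding the Artin-Schreier identity modulo $\M_{2n+1}$ yields $\wp(\delta)\equiv a_k\pmod{\M_{2n+1}}$, so Proposition~\ref{Pequal} forces $\delta\equiv\alpha_k\pmod{\M_{2n+1}}$, giving (\ref{hshift4}).

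With the $\sigma_i$ in hand and $\sigma_{2n+1}$ produced by applying Lemma~\ref{keatings lemma} to $K_{2n+1}/K_{2n}$ (giving (\ref{hshift2}) for $i=2n+1$), the group structure reduces to commutator computations. For any pair $(i,j)$ not of the form $(k,n+k)$, both automorphisms either fix $\alpha_{2n+1}$ modulo $\M_{2n+1}$ or translate it by some $\alpha_\ell$ the other fixes, so $[\sigma_i,\sigma_j]$ acts trivially on every generator modulo $\M_{2n+1}$ and is therefore the identity. For the special pair, $\sigma_i\sigma_{n+i}(\alpha_{2n+1})\equiv\alpha_{2n+1}+\alpha_i+1$ and $\sigma_{n+i}\sigma_i(\alpha_{2n+1})\equiv\alpha_{2n+1}+\alpha_i\pmod{\M_{2n+1}}$, so $[\sigma_i,\sigma_{n+i}]$ is a nontrivial power of $\sigma_{2n+1}$. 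After rescaling if necessary, the relations match Definition~\ref{heis def}, so $\Gal(K_{2n+1}/K_0)\cong H(n)$; total ramification then follows from the ramification computation of (b) and (c), which produces $2n+1$ positive upper ramification numbers.

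For (b) and (c) I would compare $-v_0(B)=u_n+u_{2n}/p$ (the maximum, attained at $j=n$) with $-v_0(a_{2n+1})=u_{2n+1}$. In case (b), $|v_0(B+a_{2n+1})|\le u_n+u_{2n}$ immediately yields a new upper ramification number $v\le u_n+u_{2n}$. In case (c), $u_{2n+1}>u_n+u_{2n}>u_n+u_{2n}/p$ makes $a_{2n+1}$ strictly dominant, and Proposition~\ref{Pequal} (or the refinement in Corollary~\ref{refine}) lets me absorb $B$ into the Artin-Schreier model $X^p-X-a_{2n+1}$; Proposition~\ref{Cp extension} then gives $v_0(\alpha_{2n+1})=-u_{2n+1}/p$ and top upper ramification number $u_{2n+1}$, while $v_0(\alpha_i)=-u_i/p$ for $i\le 2n$ is immediate from the reduced Artin-Schreier hypothesis. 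The main obstacle will be the careful absorption argument in (c): one must show that the non-integer perturbation $B$ (with valuation $-u_n-u_{2n}/p$) neither lowers $v_0(\alpha_{2n+1})$ nor alters the top ramification number, so the delicate Artin-Schreier machinery of Section~3 has to be marshalled precisely to reduce the top layer to the unperturbed model $X^p-X-a_{2n+1}$.
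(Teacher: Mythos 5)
There are genuine gaps. The most serious is your argument that $K_{2n+1}\ne K_{2n}$. You claim that because each summand $a_j\alpha_{n+j}$ has non-integral $v_0$-valuation, $B\notin\wp(K_0)$; but the relevant question is whether $B+a_{2n+1}\in\wp(K_{2n})$, and over $K_{2n}$ the valuation $v_{2n}(a_j\alpha_{n+j})=-p^{2n}u_j-p^{2n-1}u_{n+j}$ is an integer divisible by a high power of $p$, so no valuation criterion applies. The paper's proof handles this with a different idea entirely: assuming $\alpha_{2n+1}\in K_{2n}$, it computes $\gamma_i\gamma_{n+i}(\alpha_{2n+1})$ and $\gamma_{n+i}\gamma_i(\alpha_{2n+1})$ modulo $\M_{2n}$ and finds they differ by $1$, contradicting the commutativity of $\Gal(K_{2n}/K_0)$. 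Relatedly, your extension of $\sigma_{n+k}$ to $K_{2n+1}$ presupposes that $K_{2n+1}$ contains a root of $X^p-X-\gamma_{n+k}(B+a_{2n+1})$, i.e.\ that $K_{2n+1}/K_0$ is normal; this is exactly the content of the character computation $\chi_{\gamma_{n+i}(B+a_{2n+1})}^{K_{2n}}=\chi_{B+a_{2n+1}}^{K_{2n}}$ (which uses $a_i=\wp(\alpha_i)$), and you never establish it.

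Parts (b) and (c) also do not go through as sketched. For (b), a bound on $-v_0(B+a_{2n+1})$ does not ``immediately'' bound the new upper ramification number of $K_{2n+1}/K_0$: the ramification number of $K_{2n+1}/K_{2n}$ is governed by $v_{2n}$, which is divisible by $p$ here, and pushing the naive bound $p^{2n}u_n+p^{2n-1}u_{2n}$ through the Herbrand function of $K_{2n}/K_0$ overshoots $u_n+u_{2n}$. The paper instead embeds $K_{2n+1}$ into the compositum $L(\beta)$ of explicit $H(1)$-extensions $L_{(i)}$ (whose upper ramification numbers $u_i,u_{n+i},u_i+u_{n+i}$ are computed directly) and $K_0(\beta)$, and bounds $G^x$ there. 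For (c), the proposed ``absorption'' of $B$ via Proposition~\ref{Pequal} or Corollary~\ref{refine} cannot work: those results require the discrepancy to be small ($\M_L$-integral or satisfying the valuation hypothesis of Proposition~\ref{chisum}), whereas $B$ has large negative valuation and in fact generates a nontrivial $H(n)$-extension $\Kt_{2n+1}/K_0$ carrying the commutator structure — it is not a perturbation that can be removed. The paper's route is to form $\Kt_{2n+1}(\beta)/K_{2n}$, a $C_p^2$-extension containing $K_{2n+1}$, and apply Proposition~\ref{BigBreak} to identify which $C_p$-subextension carries the top break $u_{2n+1}$.
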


\begin{proof}
(a) It follows from Proposition~\ref{ele ab} that
$K_{2n}/K_0$ is a totally ramified $C_p^{2n}$-extension
with upper ramification numbers $u_1,\dots,u_{2n}$.  For
$1\le i\le 2n$ let $\gamma_i$ be the element of
$\Gal(K_{2n}/K_0)$ defined by (\ref{can gen}).  Let
$1\le i\le n$.  Then $\gamma_i(B+a_{2n+1})=B+a_{2n+1}$,
and by Lemma~\ref{keatings lemma} we have
\begin{alignat}{2} \nonumber
\gamma_{n+i}(B+a_{2n+1})-(B+a_{2n+1})
&=a_i(\gamma_{n+i}(\alpha_{n+i})-\alpha_{n+i}) \\
&\equiv a_i&&\pmod{pa_i\alpha_{n+i}^{p-1}}.
\label{tauB}
\end{alignat}
Hence by (\ref{hineq1}) we get
\begin{equation} \label{gammaB}
\gamma_{n+i}(B+a_{2n+1})\equiv B+a_{2n+1}+a_i
\pmod{\M_{2n}}.
\end{equation}

     Suppose $\alpha_{2n+1}\in K_{2n}$, so that
$K_{2n+1}=K_{2n}$.  Then for $1\le i\le n$,
\[\wp(\gamma_i(\alpha_{2n+1}))=
\gamma_i(\wp(\alpha_{2n+1}))=\gamma_i(B+a_{2n+1})
=B+a_{2n+1}=\wp(\alpha_{2n+1}).\]
Furthermore, by (\ref{gammaB}) and (\ref{hineq3}) we get
\begin{alignat*}{2}
\wp(\gamma_{n+i}(\alpha_{2n+1}))&=\gamma_{n+i}(B+a_{2n+1}) \\
&\equiv B+a_{2n+1}+a_i&&\pmod{\M_{2n}} \\
&\equiv\wp(\alpha_{2n+1})+\wp(\alpha_i)&&\pmod{\M_{2n}} \\
&\equiv\wp(\alpha_{2n+1}+\alpha_i)&&\pmod{\M_{2n}}.
\end{alignat*}
Hence by Proposition~\ref{Pequal} there are
$k,\ell\in\Z$ such that
\begin{alignat*}{2}
\gamma_i(\alpha_{2n+1})&\equiv \alpha_{2n+1}+k
&&\pmod{\M_{2n}} \\
\gamma_{n+i}(\alpha_{2n+1})&\equiv
\alpha_{2n+1}+\alpha_i+\ell&&\pmod{\M_{2n}}.
\end{alignat*}
It follows that
\begin{alignat*}{2}
\gamma_i\gamma_{n+i}(\alpha_{2n+1})&\equiv
\gamma_i(\alpha_{2n+1}+\alpha_i+\ell)&&\pmod{\M_{2n}} \\
&\equiv \alpha_{2n+1}+\alpha_i+1+k+\ell&&\pmod{\M_{2n}}
\\[2mm]
\gamma_{n+i}\gamma_i(\alpha_{2n+1})&\equiv
\gamma_{n+i}(\alpha_{2n+1}+k)&&\pmod{\M_{2n}} \\
&\equiv \alpha_{2n+1}+\alpha_i+k+\ell&&\pmod{\M_{2n}}.
\end{alignat*}
Since $\gamma_i\gamma_{n+i}=\gamma_{n+i}\gamma_i$ this
is a contradiction.  It follows that
$\alpha_{2n+1}\not\in K_{2n}$.

     By (\ref{hineq1}) and (\ref{hineq3}) we have
$v_0(B+a_{2n+1})>-pe_0/(p-1)$.  Hence by
Proposition~\ref{Cp extension} $K_{2n+1}/K_{2n}$ is a
$C_p$-extension, and we can choose
$\sigma_{2n+1}\in\Gal(K_{2n+1}/K_{2n})$ so that
\begin{equation} \label{x2n1}
\sigma_{2n+1}(\alpha_{2n+1})-\alpha_{2n+1}
\equiv1\pmod{\M_{2n+1}}.
\end{equation}
Let $1\le i\le n$.  By (\ref{hineq1}), (\ref{hineq3}),
and Proposition~\ref{chisum}(b),
$X^p-X-(B+a_{2n+1}+a_i)$ has a root $\delta$ such that
$v_0(\alpha_{2n+1}+\alpha_i-\delta)>0$.  It follows from
(\ref{gammaB}) that there is $a_i'\in K_{2n}$ such that
$\gamma_{n+i}(B+a_{2n+1})=B+a_{2n+1}+a_i'$ and
$a_i'\equiv a_i\pmod{\M_{2n}}$.  Hence by
Corollary~\ref{congruent}(b) we see that
$X^p-X-\gamma_{n+i}(B+a_{2n+1})$ has a root
$\alpha_{2n+1}'$ such that
$v_0(\alpha_{2n+1}+\alpha_i-\alpha_{2n+1}')>0$.  By
Corollary~\ref{congruent}(a) and
Proposition~\ref{chisum}(a) we get
\[\chi_{\gamma_{n+i}(B+a_{2n+1})}^{K_{2n}}
=\chi_{B+a_{2n+1}+a_i'}^{K_{2n}}
=\chi_{B+a_{2n+1}+a_i}^{K_{2n}}
=\chi_{B+a_{2n+1}}^{K_{2n}}+\chi_{a_i}^{K_{2n}}.\]
Since $a_i=\wp(\alpha_i)$ it follows that
$\chi_{\gamma_{n+i}(B+a_{2n+1})}^{K_{2n}}
=\chi_{B+a_{2n+1}}^{K_{2n}}$, and hence that
$\alpha_{2n+1}'\in K_{2n+1}$.  Therefore we may extend
$\gamma_{n+i}$ to $\sigma_{n+i}\in\Aut(K_{2n+1})$ by
setting $\sigma_{n+i}(\alpha_{2n+1})=\alpha_{2n+1}'$.
For $1\le i\le n$ we have $\gamma_i(B)=B$, so we may
extend $\gamma_i$ to $\sigma_i\in\Aut(K_{2n+1})$ by
setting $\sigma_i(\alpha_{2n+1})=\alpha_{2n+1}$.  Hence
$K_{2n+1}/K_0$ is a Galois extension.  

     Statements (\ref{hshift1}), (\ref{hshift3}), and
(\ref{hshift4}) follow directly from the definitions of
$\sigma_i$ and $\sigma_{n+i}$, while (\ref{hshift2})
follows from the definitions and
Lemma~\ref{keatings lemma}.  Since $K_{2n}/K_0$ is an
elementary abelian $p$-extension we have
$[\sigma_i,\sigma_j]\in\Gal(K_{2n+1}/K_{2n})$ and
$\sigma_i^p\in\Gal(K_{2n+1}/K_{2n})$ for
$1\le i,j\le2n+1$.  By considering how these elements
act on $\alpha_{2n+1}$ we easily find that
$\sigma_i^p=1$ for $1\le i\le2n+1$, and
$[\sigma_i,\sigma_j]=1$ for
$1\le i\le j\le2n+1$ unless $1\le i\le n$ and $j=n+i$.
For instance, if $1\le i<j\le n$ then
\begin{alignat*}{2}
[\sigma_{n+i},\sigma_{n+j}](\alpha_{2n+1})
&=\sigma_{n+i}\sigma_{n+j}\sigma_{n+i}^{-1}
\sigma_{n+j}^{-1}(\alpha_{2n+1}) \\
&\equiv\sigma_{n+i}\sigma_{n+j}\sigma_{n+i}^{-1}
(\alpha_{2n+1}-\alpha_j)&&\pmod{\M_{2n+1}} \\
&\equiv\sigma_{n+i}\sigma_{n+j}
(\alpha_{2n+1}-\alpha_i-\alpha_j)&&\pmod{\M_{2n+1}} \\
&\equiv\sigma_{n+i}(\alpha_{2n+1}-\alpha_i)
&&\pmod{\M_{2n+1}} \\
&\equiv \alpha_{2n+1}&&\pmod{\M_{2n+1}}.
\end{alignat*}
Hence $[\sigma_{n+i},\sigma_{n+j}](\alpha_{2n+1})
=\alpha_{2n+1}$, so $[\sigma_{n+i},\sigma_{n+j}]=1$.
On the other hand, using (\ref{x2n1}) we get
\begin{alignat*}{2}
[\sigma_i,\sigma_{n+i}](\alpha_{2n+1})
&=\sigma_i\sigma_{n+i}\sigma_i^{-1}
\sigma_{n+i}^{-1}(\alpha_{2n+1}) \\
&\equiv\sigma_i\sigma_{n+i}\sigma_i^{-1}
(\alpha_{2n+1}-\alpha_i)&&\pmod{\M_{2n+1}} \\
&\equiv\sigma_i\sigma_{n+i}(\alpha_{2n+1}-\alpha_i+1)
&&\pmod{\M_{2n+1}} \\
&\equiv\sigma_i(\alpha_{2n+1}+1)&&\pmod{\M_{2n+1}} \\
&\equiv \alpha_{2n+1}+1&&\pmod{\M_{2n+1}} \\
&\equiv\sigma_{2n+1}(\alpha_{2n+1})&&\pmod{\M_{2n+1}}.
\end{alignat*}
Therefore $[\sigma_i,\sigma_{n+i}]=\sigma_{2n+1}$ for
$1\le i\le n$.  We conclude that
$\Gal(K_{2n+1}/K_0)\cong H(n)$. \\[\medskipamount]
(b) For $1\le i\le n$ let $L_{(i)}$ be the extension of
$K_0(\alpha_i,\alpha_{n+i})$ generated by the roots of
$X^p-X-a_i\alpha_{n+i}$.  It follows from (a) that
$L_{(i)}/K_0$ is an $H(1)$-extension.  Let $M_{(i)}$ be
the extension of $K_0(\alpha_{n+i})$ generated by the
roots of $X^p-X-a_i\alpha_{n+i}$.  Then
$M_{(i)}/K_0(\alpha_{n+i})$ is a $C_p$-extension with
ramification number
$-v_1(a_i\alpha_{n+i})=pu_i+u_{n+i}$.  Since
$K_0(\alpha_{n+i})/K_0$ is a $C_p$-extension with
ramification number $u_{n+i}$, $M_{(i)}/K_0$ is an
extension of degree $p^2$ with lower ramification
numbers $u_{n+i},pu_i+u_{n+i}$.  Hence the upper
ramification numbers of $M_{(i)}/K_0$ are
$u_{n+i},u_i+u_{n+i}$.  Since
$K_0(\alpha_i,\alpha_{n+i})/K_0$ has upper ramification
numbers $u_i,u_{n+i}$, and
$K_0(\alpha_i,\alpha_{n+i})\subset L_{(i)}$, we deduce
that the upper ramification numbers of $L_{(i)}/K_0$ are
$u_i,u_{n+i},u_i+u_{n+i}$.

     Set $L=L_{(1)}L_{(2)}\dots L_{(n)}$.  Then
$K_{2n}\subset L$.  Let $\beta\in K_0^{sep}$ be a root of
$X^p-X-a_{2n+1}$.  Then $L(\beta)/K_0$ is a compositum
of Galois $p$-extensions, so $L(\beta)/K_0$ is a Galois
$p$-extension.  Set $G=\Gal(L(\beta)/K_0)$ and
$N_{(i)}=\Gal(L(\beta)/L_{(i)})$ for $1\le i\le n$.
Then $N_{(i)}\trianglelefteq G$ and
$\Gal(L_{(i)}/K_0)\cong G/N_{(i)}$.  For
$x>u_i+u_{n+i}$ we have $G^xN_{(i)}/N_{(i)}
=(G/N_{(i)})^x=\{1\}$, and hence $G^x\le N_{(i)}$.  Set
$H=\Gal(L(\beta)/K_0(\beta))$; then $H\trianglelefteq G$
and $\Gal(K_0(\beta)/K_0)\cong G/H$.  For
$x>u_n+u_{2n}\ge u_{2n+1}$ we get $G^xH/H=(G/H)^x=\{1\}$,
and hence $G^x\le H$.  Since
$N_{(1)}\cap\dots\cap N_{(n)}\cap H=\{1\}$, it follows
that $G^x=\{1\}$ for all $x>u_n+u_{2n}$.  Therefore
the upper ramification numbers of $L(\beta)/K_0$ are all
$\le u_n+u_{2n}$.  By (\ref{hineq1}), (\ref{hineq3}),
and Proposition~\ref{chisum}(a) we have
\[\chi_{B+a_{2n+1}}^{K_{2n}}=
\chi_{a_1\alpha_{n+1}}^{K_{2n}}+
\dots+\chi_{a_n\alpha_{2n}}^{K_{2n}}
+\chi_{a_{2n+1}}^{K_{2n}}.\]
Hence $K_{2n+1}\subset L(\beta)$, so the upper
ramification numbers of $K_{2n+1}/K_0$ are bounded above
by $u_n+u_{2n}$.  Since the subextension $K_{2n}/K_0$
of $K_{2n+1}/K_0$ has upper ramification numbers
$u_1,\dots,u_{2n}$, we conclude that there is
$v\le u_n+u_{2n}$ such that the upper ramification
numbers of $K_{2n+1}/K_0$ are $u_1,\dots,u_{2n},v$.
\\[\medskipamount]
(c) Let $\Kt_{2n+1}$ be the extension of $K_{2n}$
generated by the roots of $X^p-X-B$.  Then by (a) and
(b) $\Kt_{2n+1}/K_0$ is an $H(n)$-extension whose upper
ramification numbers are $u_1,\dots,u_{2n},v$ for some
$v\le u_n+u_{2n}$.  Once again let $\beta\in K_0^{sep}$
be a root of $X^p-X-a_{2n+1}$.  Since $p\nmid u_{2n+1}$
we see that $u_{2n+1}$ is an upper ramification number
of $K_0(\beta)/K_0$, and hence also of
$\Kt_{2n+1}(\beta)/K_0$.  Since $u_{2n+1}>u_n+u_{2n}$,
$u_{2n+1}$ is not an upper ramification number of
$\Kt_{2n+1}/K_0$.  Therefore
$\Kt_{2n+1}(\beta)\not=\Kt_{2n+1}$ and
$\Kt_{2n+1}(\beta)/K_{2n}$ is a $C_p^2$-extension.  By
(\ref{hineq3}) and Proposition~\ref{chisum}(a) we have
$\chi_{B+a_{2n+1}}^{K_{2n}}=\chi_B^{K_{2n}}
+\chi_{a_{2n+1}}^{K_{2n}}$.  Hence
$K_{2n+1}\subset\Kt_{2n+1}(\beta)$ and
$K_{2n+1}\not=\Kt_{2n+1}$.  The upper ramification
numbers of the elementary abelian $p$-extension
$K_{2n}(\beta)/K_0$ are $u_1,\dots,u_{2n},u_{2n+1}$.
Since $u_{2n+1}>u_n+u_{2n}\ge v$, the upper ramification
numbers of $\Kt_{2n+1}(\beta)/K_0$ are
$u_1,\dots,u_{2n},u_{2n+1},v$.  Using
Proposition~\ref{BigBreak} we deduce that the upper
ramification numbers of $K_{2n+1}/K_0$ are
$u_1,\dots,u_{2n},u_{2n+1}$.  We clearly have
$v_0(\alpha_i)=p^{-1}v_0(a_i)=-p^{-1}u_i$ for
$1\le i\le2n$.  Since $u_{2n+1}>u_n+u_{2n}$ we also have
\[v_0(\alpha_{2n+1})=p^{-1}v_0(B+a_{2n+1})
=p^{-1}v_0(a_{2n+1})=-p^{-1}u_{2n+1}. \qedhere \]
\end{proof}

\section{Generalized metacyclic extensions}
\label{hi met section}

In this section we define the generalized metacyclic
group $M(n)$ of order $p^{2n+1}$.  We then show how to
construct totally ramified $M(n)$-extensions of local
fields with residue characteristic $p$.  In
Section~\ref{build} we will use these results to
construct generalized metacyclic extensions which
possess a Galois scaffold.

\begin{definition}
Let $p$ be an odd prime.  For $n\ge1$ we define a group
$M(n)$ of order $p^{2n+1}$ generated by
$h_1,\ldots,h_{2n},h_{2n+1}$, with $h_1^p=h_{2n+1}$ and
$|h_i|=p$ for $2\le i\le2n+1$.  All these generators
commute with each other, except for $h_i$ and $h_{n+i}$,
which satisfy $[h_i,h_{n+i}]=h_{2n+1}$ for
$1\le i\le n$.  Thus $M(1)$ is the metacyclic group of
order $p^3$ and $M(n)$ is an extraspecial $p$-group with
exponent $p^2$.
\end{definition}

\begin{proposition} \label{extra special p ext}
Let $K_0$ be a local field with residue characteristic
$p>2$ and let $a_1,a_2,\dots,a_{2n},a_{2n+1}\in K_0$.
For $1\le i\le2n+1$ set $u_i=-v_0(a_i)$.  Assume that
$a_1,a_2,\dots,a_{2n}$ are reduced Artin-Schreier
constants and $p\nmid u_{2n+1}$.  Assume further that
\begin{align}
u_n+(1-p^{-1})u_{2n}&<e_0 \label{mineq1} \\
p^{-1}u_n+p^{-2}u_{2n}+(1-p^{-1})u_{2n+1}&<e_0
\label{mineq3} \\
(1-p^{-1}+p^{-2})u_1+(1-p^{-1})u_{2n+1}&<e_0
\label{mineq2} \\
u_n+u_{2n}&<u_{2n+1} \label{mineq4} \\
(p+1-p^{-1})u_1&<u_{2n+1}. \label{mineq5}
\end{align}
For $1\le i\le2n$ let $\alpha_i$ satisfy
$\alpha_i^p-\alpha_i=a_i$.  Set
\begin{align*}
B&=a_1\alpha_{n+1}+a_2\alpha_{n+2}+\dots+a_n\alpha_{2n}
\\
C&=D(\alpha_1,a_1)=-\sum_{i=1}^{p-1}
p^{-1}\binom{p}{i}\alpha_1^ia_1^{p-i}
\end{align*}
and let $\alphah_{2n+1}\in K_0^{sep}$ satisfy
$\alphah_{2n+1}^p-\alphah_{2n+1}=B+C+a_{2n+1}$.  Define
$K_1,\dots,K_{2n}$ recursively by
$K_i=K_{i-1}(\alpha_i)$ for $1\leq i\leq 2n$, and set
$\Kh_{2n+1}=K_{2n}(\alphah_{2n+1})$.  Then
\begin{enumerate}[(a)]
\item $\Kh_{2n+1}/K_0$ is a totally ramified extension
with upper ramification numbers
$u_1\le\dots\le u_{2n}\le u_{2n+1}$.  Furthermore, we
have $v_0(\alpha_i)=-p^{-1}u_i$ for $1\le i\le2n+1$.
\item $\Kh_{2n+1}/K_0$ is a Galois extension with Galois
group $M(n)$.
\item For $1\le i\le2n+1$ there are
$\sigmah_i\in\Gal(\Kh_{2n+1}/K_{i-1})$ such that
$\sigmah_i|_{K_i}$ generates $\Gal(K_i/K_{i-1})$ for
$1\le i\le2n$ and $\sigmah_{2n+1}$ generates
$\Gal(\Kh_{2n+1}/K_{2n})$, with the following
properties:
\begin{alignat}{3}
\sigmah_i(\alpha_j)&=\alpha_j&&&&
\text{ for $1\le i<j\le2n$}, \label{mshift1} \\
\sigmah_i(\alpha_i)&\equiv\alpha_i+1
&&\pmod{p\alpha_i^{p-1}}&&\text{ for $1\le i\le2n$},
\label{mshift2} \\
\sigmah_1(\alphah_{2n+1})&\equiv\alphah_{2n+1}
&&\pmod{\alpha_1^{p-1}},\label{mshift6} \\
\sigmah_i(\alphah_{2n+1})&=\alphah_{2n+1}
&&&&\text{ for $2\le i\le n$}, \label{mshift3} \\
\sigmah_{n+i}(\alphah_{2n+1})&\equiv
\alphah_{2n+1}&&\pmod{\alpha_i}\;
&&\text{ for $1\le i\le n$}, \label{mshift4} \\
\sigmah_{2n+1}(\alphah_{2n+1})&\equiv
\alphah_{2n+1}+1&&\pmod{p\alphah_{2n+1}^{p-1}}.
\label{mshift5}
\end{alignat}
\end{enumerate}
\end{proposition}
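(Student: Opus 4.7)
The strategy mirrors Proposition~\ref{hi he}, with the Witt addition term $C=D(\alpha_1,a_1)$ playing exactly the role it does in Proposition~\ref{Cp2}: it is engineered to force $\sigmah_1$ to have order $p^2$ rather than $p$, so that $\sigmah_1^p=\sigmah_{2n+1}$ and $\Gal(\Kh_{2n+1}/K_0)\cong M(n)$ rather than $H(n)$.

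For part (a), Proposition~\ref{ele ab} gives that $K_{2n}/K_0$ is a totally ramified $C_p^{2n}$-extension with upper ramification numbers $u_1,\dots,u_{2n}$ and $v_0(\alpha_i)=-p^{-1}u_i$. The $i=1$ term of $C=-\sum p^{-1}\binom{p}{i}\alpha_1^i a_1^{p-i}$ dominates, giving $v_0(C)=-(p-1+p^{-1})u_1$; together with $v_0(B)\ge -(u_n+p^{-1}u_{2n})$, hypotheses (\ref{mineq4}) and (\ref{mineq5}) yield $v_0(B+C+a_{2n+1})=v_0(a_{2n+1})=-u_{2n+1}$. Since $p\nmid u_{2n+1}$, Proposition~\ref{Cp extension} makes $\Kh_{2n+1}/K_{2n}$ a totally ramified $C_p$-extension with ramification number $u_{2n+1}$ and $v_0(\alphah_{2n+1})=-p^{-1}u_{2n+1}$. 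To identify the full list of upper ramification numbers of $\Kh_{2n+1}/K_0$, I would mimic Proposition~\ref{hi he}(c): introduce auxiliary Artin-Schreier roots $\beta$ of $X^p-X-a_{2n+1}$ and $\tilde\gamma$ of $X^p-X-(B+C)$, observe by Proposition~\ref{chisum}(a) that $\Kh_{2n+1}\subset K_{2n}(\tilde\gamma,\beta)$, and combine this containment with Proposition~\ref{BigBreak} and the monotonicity of upper ramification numbers under subextensions to read off $u_1,\dots,u_{2n},u_{2n+1}$.

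For parts (b) and (c) I would work simultaneously. Conditions (\ref{mineq1})--(\ref{mineq3}) provide the bounds that Proposition~\ref{chisum} requires to decompose $\chi^{K_{2n}}_{B+C+a_{2n+1}}=\chi^{K_{2n}}_B+\chi^{K_{2n}}_C+\chi^{K_{2n}}_{a_{2n+1}}$, and to do the same for the $\gamma_i$-images; extending each $\gamma_i$ of (\ref{can gen}) to $\sigmah_i\in\Gal(\Kh_{2n+1}/K_{i-1})$ is then routine. For $i\ne 1$, $\gamma_i$ fixes $C$ and the argument is identical to that of Proposition~\ref{hi he}(a), producing (\ref{mshift3}) and (\ref{mshift4}). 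The new case is $i=1$: a direct expansion of the identity underlying the Witt construction gives
\[D(\alpha_1+1,a_1)-D(\alpha_1,a_1)\equiv\wp(D(1,\alpha_1))\pmod{\M_{2n}},\]
so $\sigmah_1(\alphah_{2n+1})\equiv\alphah_{2n+1}+D(1,\alpha_1)$ to high order; since $v_0(D(1,\alpha_1))=-(1-p^{-1})u_1=v_0(\alpha_1^{p-1})$, this is exactly (\ref{mshift6}). Finally, Lemma~\ref{keatings lemma} applied to $\wp(\alphah_{2n+1})=B+C+a_{2n+1}$ yields a generator $\sigmah_{2n+1}$ of $\Gal(\Kh_{2n+1}/K_{2n})$ satisfying (\ref{mshift5}).

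The main obstacle is verifying that the $\sigmah_i$ satisfy the defining relations of $M(n)$, in particular the distinguishing relation $\sigmah_1^p=\sigmah_{2n+1}$. Iterating (\ref{mshift6}) gives
\[\sigmah_1^k(\alphah_{2n+1})\equiv\alphah_{2n+1}+\sum_{j=0}^{k-1}D(1,\alpha_1+j)\pmod{\text{error}},\]
and at $k=p$ the polynomial identity $\sum_{j=0}^{p-1}D(1,X+j)=1-p^{-1}((X+p)^p-X^p)\equiv 1\pmod p$, an immediate consequence of $(X+p)^p\equiv X^p\pmod{p^2}$, forces $\sigmah_1^p(\alphah_{2n+1})\equiv\alphah_{2n+1}+1\pmod{\M_{2n+1}}$. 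Since $\sigmah_1^p|_{K_{2n}}=\gamma_1^p=\id$ and a nontrivial element of $\Gal(\Kh_{2n+1}/K_{2n})\cong C_p$ is determined by its leading-order shift on $\alphah_{2n+1}$, this matches $\sigmah_{2n+1}$ and so $\sigmah_1^p=\sigmah_{2n+1}$. The remaining relations ($\sigmah_i^p=1$ for $2\le i\le 2n+1$; $[\sigmah_i,\sigmah_{n+i}]=\sigmah_{2n+1}$ for $1\le i\le n$; all other commutators trivial) follow from explicit shift computations on $\alphah_{2n+1}$ of exactly the same flavor as in Proposition~\ref{hi he}(a), with a little extra bookkeeping whenever $\sigmah_1$ is involved.
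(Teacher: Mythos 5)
Your route through part (b) is genuinely different from the paper's. The paper never verifies the relation $\sigmah_1^p=\sigmah_{2n+1}$ by hand: it realizes $\Kh_{2n+1}$ inside the compositum $K_{2n+1}E$, where $K_{2n+1}/K_0$ is the $H(n)$-extension of Proposition~\ref{hi he} (generated over $K_{2n}$ by a root of $X^p-X-B$) and $E/K_0$ is the $C_{p^2}$-extension of Proposition~\ref{Cp2} attached to $C+a_{2n+1}$; it then computes $\Gal(K_{2n+1}E/K_0)$ as a fiber product of $H(n)$ and $C_{p^2}$ over $C_p$ and reads off $M(n)$ as the quotient by $\Gal(K_{2n+1}E/\Kh_{2n+1})$. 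The element of order $p^2$ is thus inherited from $E/K_0$, whose cyclicity was already established in Proposition~\ref{Cp2}(a), and $\sigmah_1$ is obtained by restricting a lift $\rho$ of $(\sigma_1,\theta)$, with (\ref{mshift6}) extracted via Corollary~\ref{refine}. Your alternative --- iterating the shift $D(1,\alpha_1)$ and using $\sum_{j=0}^{p-1}D(1,X+j)=1-p^{-1}\bigl((X+p)^p-X^p\bigr)\equiv1\pmod{p}$ --- is correct (note that modulo $p$ one in fact gets $\equiv1\pmod{p\alpha_1^{p-1}}$ after evaluation, and $v_0(p\alpha_1^{p-1})>0$ by (\ref{mineq1})), and it is robust against replacing $\sigmah_1$ by $\sigmah_1 z$ with $z\in\Gal(\Kh_{2n+1}/K_{2n})$, since the integer offset $k$ contributes $pk\in\M$ after $p$ iterations. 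What the paper's route buys is that all the delicate error estimates are localized in Proposition~\ref{Cp2} and Corollary~\ref{refine}; your route requires controlling the accumulated error $\sum_j\bigl(D(1,\gamma_1^j(\alpha_1))-D(1,\alpha_1+j)\bigr)$, which needs roughly $e_0>(2-3p^{-1})u_1$ --- this does follow from (\ref{mineq2}) and (\ref{mineq5}), but you should say so rather than defer it to ``bookkeeping.''

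There is one step that fails as stated. You invoke Proposition~\ref{Cp extension} to conclude that $\Kh_{2n+1}/K_{2n}$ is a nontrivial totally ramified $C_p$-extension ``since $p\nmid u_{2n+1}$.'' But the base field of that Artin--Schreier extension is $K_{2n}$, and $v_{2n}(B+C+a_{2n+1})=-p^{2n}u_{2n+1}$ is divisible by $p$ (for $n\ge1$), so the hypothesis $p\nmid v_K(a)$ of Proposition~\ref{Cp extension} is violated; the proposition gives you neither $[\Kh_{2n+1}:K_{2n}]=p$ nor total ramification, and indeed the possibility $\alphah_{2n+1}\in K_{2n}$ must be excluded by other means (compare the commutator argument needed in the proof of Proposition~\ref{hi he}(a)). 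This is repairable with tools you already deploy: the decomposition $\chi^{K_{2n}}_{B+C+a_{2n+1}}=\chi^{K_{2n}}_{B+C}+\chi^{K_{2n}}_{a_{2n+1}}$ from Proposition~\ref{chisum}(a) embeds $\Kh_{2n+1}$ in $K_{2n}(\tilde\gamma,\beta)$, and since $u_{2n+1}$ exceeds every upper ramification number of $K_{2n}(\tilde\gamma)/K_0$ (here (\ref{mineq4}) and (\ref{mineq5}) are needed to bound the break coming from $\chi_{B+C}$, which can be as large as $pu_1$ because of the $C$ term), the character $\chi^{K_{2n}}_{B+C+a_{2n+1}}$ is nontrivial and distinct from $\chi^{K_{2n}}_{B+C}$, whence $[\Kh_{2n+1}:K_{2n}]=p$ and Proposition~\ref{BigBreak} applies as you intend. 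With that repair, and the error estimates made explicit, your argument goes through.
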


\begin{proof}
(a) Let $K_{2n+1}$ be the extension of $K_{2n}$
generated by the roots of $X^p-X-B$.  Thanks to
assumptions (\ref{mineq1}) and (\ref{mineq3}) we can
apply Proposition~\ref{hi he}, which says that
$K_{2n+1}/K_0$ is an $H(n)$-extension whose upper
ramification numbers are $u_1,\dots,u_{2n},v$ for some
$v\le u_n+u_{2n}$.  Let $E$ be the extension of $K_1$
generated by the roots of $X^p-X-(C+a_{2n+1})$.  Then by
(\ref{mineq2}), (\ref{mineq5}), and
Proposition~\ref{Cp2}, $E/K_0$ is a $C_{p^2}$-extension
with upper ramification numbers $u_1,u_{2n+1}$.
Furthermore, $K_{2n+1}\cap E=K_1$ and $K_{2n+1}E$ is a
$C_p^2$-extension of $K_{2n}$.  Using (\ref{mineq5}),
(\ref{mineq4}), (\ref{mineq3}), and
Proposition~\ref{chisum}(a) we get
$\chi_{B+C+a_{2n+1}}^{K_{2n}}
=\chi_{B}^{K_{2n}}+\chi_{C+a_{2n+1}}^{K_{2n}}$, and
hence $\Kh_{2n+1}\subset K_{2n+1}E$.  Therefore by
(\ref{mineq4}) and Proposition~\ref{BigBreak} the upper
ramification numbers of $\Kh_{2n+1}/K_0$ are
$u_1\le\dots\le u_{2n}\le u_{2n+1}$.  We clearly have
$v_0(\alpha_i)=p^{-1}v_0(a_i)=-p^{-1}u_i$ for
$1\le i\le2n$.  It follows from (\ref{mineq4}) and
(\ref{mineq5}) that
$v_0(B+C+a_{2n+1})=v_0(a_{2n+1})=-u_{2n+1}$.  Therefore
$v_0(\alpha_{2n+1})=-p^{-1}u_{2n+1}$.
\\[\medskipamount]
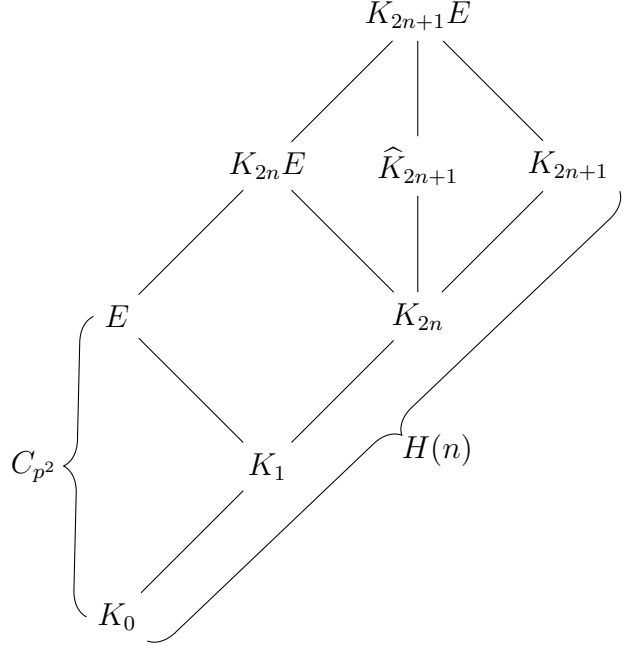
\begin{figure}
\begin{center}
\vspace{-6mm}
\begin{tikzpicture}[node distance = 2cm, auto]
\node (K0) {$K_0$};
\node (K1) [right of=K0, above of=K0] {$K_1$};
\node (E) [left of=K1, above of=K1] {$E$};
\node (K2n) [right of=K1, above of=K1] {$K_{2n}$};
\node (K2n1) [right of=K2n, above of=K2n] {$K_{2n+1}$};
\node (K2n1h) [above of=K2n] {$\Kh_{2n+1}$};
\node (K2nE) [above of=E, right of=E] {$K_{2n}E$};
\node (K2n1E) [above of=K2nE, right of=K2nE] {$K_{2n+1}E$};
\draw[-] (K1) to (E);
\draw[-] (K1) to (K2n);
\draw[-] (K0) to (K1);
\draw[-] (K2n) to (K2nE);
\draw[-] (E) to (K2nE);
\draw[-] (K2n) to (K2n1);
\draw[-] (K2n) to (K2n1h);
\draw[-] (K2nE) to (K2n1E);
\draw[-] (K2n1) to (K2n1E);
\draw[-] (K2n1h) to (K2n1E);
\draw [decorate,decoration={brace,amplitude=10pt}]
(K0.west) -- (E.west) node [midway,xshift=-3mm,yshift=0mm]
{$C_{p^2}$};
\draw [decorate,decoration={brace,amplitude=10pt}]
(K2n1.south east) -- (K0.south east)
node [midway,xshift=1mm,yshift=-1mm] {$H(n)$};
\end{tikzpicture}
\end{center}
\caption{Field diagram for
Proposition~\ref{extra special p ext}}
\end{figure}
(b) Recall the generators $g_1,\dots,g_{2n+1}$ for
$H(n)$ given in Definition~\ref{heis def}, and write
$C_{p^2}=\langle k\rangle$.  Then
\begin{align*}
\Gal(K_{2n+1}E/K_0)&\cong\{(\alpha,\beta)\in
\Gal(K_{2n+1}/K_0)\times\Gal(E/K_0):
\alpha|_{K_1}=\beta|_{K_1}\} \\
&\cong\{(g_1^{r_1}g_2^{r_2}\dots g_{2n+1}^{r_{2n+1}},k^s)
\in H(n)\times C_{p^2}:r_1\equiv s\pmod{p}\}.
\end{align*}
Let $N=\Gal(K_{2n+1}E/\Kh_{2n+1})$.  Then the
isomorphism above maps $N$ to
$\langle(g_{2n+1}^v,k^{-p})\rangle$ for some
$1\le v\le p-1$.  Let $1\le w\le p-1$ be such that
$vw\equiv1\pmod{p}$.  Then there is an onto homomorphism
$\phi:\Gal(K_{2n+1}E/K_0)\to M(n)$ defined by
$\phi(g_1,k)=h_1^v$, $\phi(g_{n+1},1)=h_{n+1}^w$, and
$\phi(g_i,1)=h_i$ for $2\le i\le n$ and
$n+2\le i\le2n+1$.  Since $\ker(\phi)=N$ this gives an
isomorphism $\Gal(\Kh_{2n+1}/K_0)\cong M(n)$.
\\[\medskipamount]
(c) For $1\le i\le 2n$ let $\gamma_i$ be the element of
$\Gal(K_{2n}/K_0)$ defined by (\ref{can gen}).  Then for
$2\le i\le n$ we have
$\gamma_i(B+C+a_{2n+1})=B+C+a_{2n+1}$.  Hence we can
extend $\gamma_i$ to $\sigmah_i\in\Aut(\Kh_{2n+1})$ by
setting $\sigmah_i(\alphah_{2n+1})=\alphah_{2n+1}$.
Let $1\le i\le n$ and let $\alphah_{2n+1}'$ be a root of
$X^p-X-\gamma_{n+i}(B+C+a_{2n+1})$.  We extend
$\gamma_{n+i}$ to $\sigmah_{n+i}\in\Aut(\Kh_{2n+1})$ by
setting $\sigmah_{n+i}(\alphah_{2n+1})=\alphah_{2n+1}'$.
By (\ref{tauB}) and (\ref{mineq1}) we have
\[\gamma_{n+i}(B+C+a_{2n+1})\equiv B+C+a_{2n+1}
\pmod{a_i}.\]
Hence by (\ref{mineq3}) and Proposition~\ref{chisum}(b)
we get
$\alphah_{2n+1}'\equiv\alphah_{2n+1}\pmod{\alpha_i}$.
It follows from Lemma~\ref{keatings lemma} that there is
a generator $\sigmah_{2n+1}$ for
$\Gal(\Kh_{2n+1}/K_{2n})$ such that
\[\sigmah_{2n+1}(\alphah_{2n+1})\equiv\alphah_{2n+1}+1
\pmod{p\alphah_{2n+1}^{p-1}}.\]
Note that statements (\ref{mshift1}), (\ref{mshift2}),
(\ref{mshift3}), (\ref{mshift4}), and (\ref{mshift5})
follow directly from these constructions.

     In order to extend $\gamma_1$ to an automorphism of
$\Kh_{2n+1}$ we let $\eta\in E$ be a root of
$X^p-X-(C+a_{2n+1})$ and let $\theta$ be a generator of
$\Gal(E/K_0)$ such that $\theta(\alpha_1)\equiv
\alpha_1+1\pmod{p\alpha_1^{p-1}}$ and $\theta(\eta)
\equiv\eta+D(1,\alpha_1)\pmod{\M_E}$.  Such $\theta$
exists by (\ref{mineq2}), (\ref{mineq5}), and
Proposition~\ref{Cp2}(c).  Let
$\sigma_1,\sigma_{2n+1}\in\Gal(K_{2n+1}/K_0)$ be defined
as in Proposition~\ref{hi he}.  Then
$\theta|_{K_1}=\sigma_1|_{K_1}$, so there is
$\rho\in\Gal(K_{2n+1}E/K_0)$ such that $\rho|_E=\theta$
and $\rho|_{K_{2n+1}}=\sigma_1$.  Define
$\sigmah_1=\rho|_{\Kh_{2n+1}}$.  Then
$\sigmah_1|_{K_{2n}}=\sigma_1|_{K_{2n}}=\gamma_1$.  By
(\ref{mineq4}), (\ref{mineq3}), and
Proposition~\ref{chisum}(b) there is a root
$\alpha_{2n+1}$ of $X^p-X-B$ such that
$\alphah_{2n+1}\equiv\alpha_{2n+1}+\eta
\pmod{p\eta^{p-1}\alpha_{2n+1}}$.  By
Corollary~\ref{refine} and (\ref{hshift3}) we get
\begin{alignat*}{2}
(\rho-1)\alphah_{2n+1}&\equiv(\rho-1)\alpha_{2n+1}
+(\rho-1)\eta&&\pmod{p\eta^{p-1}} \\
(\sigmah_1-1)\alphah_{2n+1}&\equiv
(\sigma_1-1)\alpha_{2n+1}+(\theta-1)\eta
&&\pmod{p\eta^{p-1}} \\
&\equiv D(1,\alpha_1)&&\pmod{\M_{\Kh_{2n+1}}} \\
&\equiv0&&\pmod{\alpha_1^{p-1}}.
\end{alignat*}
This proves (\ref{mshift6}).
\end{proof}

     Henceforth we will denote the $M(n)$-extensions
constructed using Proposition~\ref{extra special p ext}
by $K_{2n+1}/K_0$, rather than $\Kh_{2n+1}/K_0$.  In
addition, we will denote the generator of $K_{2n+1}$
over $K_{2n}$ by $\alpha_{2n+1}$ rather than
$\alphah_{2n+1}$, and we will denote the generators of
$\Gal(K_{2n+1}/K_0)$ by $\sigma_i$ rather than
$\sigmah_i$.

\section{Valuations of determinants}

To construct Galois scaffolds for $H(n)$- and
$M(n)$-extensions we use an approach which is similar to
that used in \cite{EK22} and \cite{EKgen}, with the
significant difference that our fields have
characteristic 0 rather than characteristic $p$.  For a
local field $F$ with residue characteristic $p$ define
$\phi:F\to F$ by $\phi(x)=x^p$; then $\phi$ can also be
applied to matrices and vectors over $F$ by acting on
the entries.  If $\ch(F)=0$ then we don't necessarily
have $\phi(x+y)=\phi(x)+\phi(y)$.  However, if
$w,x,y,z\in F$ satisfy $v_F(x)\le v_F(y)$ and
$x+y\equiv w \pmod{z}$ then we do have
$\phi(x)+\phi(y)\equiv\phi(w)\pmod{(px^p,z^p)}$.
Therefore we get the following:

\begin{lemma} \label{phidet}
Let $A=(a_{ij})\in M_k(F)$ and let $\tau$ be a
permutation of $\{1,2,\dots,k\}$ chosen so that
$\gamma=\prod_{i=1}^ka_{i,\tau(i)}$ has minimum
valuation among the $k!$ terms in the Leibniz expansion
of $\det(A)$.  Then
\begin{enumerate}[(a)]
\item
$\det(\phi(A))\equiv\phi(\det(A))\pmod{p\gamma^p}$.
\item If $d,\mu\in F$ satisfy
$\det(A)\equiv d\pmod{\mu}$ then
\[\det(\phi(A))\equiv\phi(d)\pmod{(p\gamma^p,\mu^p)}.\]
\end{enumerate}
\end{lemma}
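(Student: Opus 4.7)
The approach is to unfold the Leibniz expansion and iterate the one-step congruence recalled just before the lemma: if $v_F(x)\le v_F(y)$ and $x+y\equiv w\pmod{z}$, then $\phi(x)+\phi(y)\equiv\phi(w)\pmod{(px^p,z^p)}$.

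For part (a), I would write $\det(A)=\sum_{\sigma}\mathrm{sgn}(\sigma)\,t_\sigma$ with $t_\sigma=\prod_{i}a_{i,\sigma(i)}$, and observe that because $\phi$ is multiplicative and $p$ is odd,
\[
\det(\phi(A))=\sum_\sigma\mathrm{sgn}(\sigma)\phi(t_\sigma)=\sum_\sigma\phi\bigl(\mathrm{sgn}(\sigma)t_\sigma\bigr).
\]
Enumerate the $k!$ summands as $x_1,\dots,x_N$ with $x_1=\mathrm{sgn}(\tau)\gamma$ placed first, so that $v_F(x_1)=v_F(\gamma)\le v_F(x_j)$ for all $j$. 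Set $S_j=x_1+\dots+x_j$. I would prove by induction on $j$ that
\[
\phi(S_j)\equiv\phi(x_1)+\dots+\phi(x_j)\pmod{p\gamma^p}.
\]
For the inductive step, apply the one-step fact to the decomposition $S_{j-1}+x_j=S_j$, placing whichever of $S_{j-1},x_j$ has smaller valuation into the $x$-slot. In either alternative the resulting modulus ($pS_{j-1}^p$ or $px_j^p$) lies in $p\gamma^p\OO_F$, since both $v_F(S_{j-1})$ and $v_F(x_j)$ are at least $v_F(\gamma)$. Setting $j=N$ proves (a).

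For part (b), write $\epsilon=\det(A)-d$, so $v_F(\epsilon)\ge v_F(\mu)$. Apply the one-step fact to $d+\epsilon=\det(A)$ with $z=\mu$, placing whichever of $d,\epsilon$ has smaller valuation in the $x$-slot. Since $\phi(\epsilon)=\epsilon^p\in\mu^p\OO_F$, rearranging gives $\phi(\det(A))\equiv\phi(d)$ modulo $(pd^p,\mu^p)$, or modulo $(p\epsilon^p,\mu^p)$ if $v_F(\epsilon)<v_F(d)$. A short case split shows that this ideal sits inside $(p\gamma^p,\mu^p)$: if $v_F(d)\ge v_F(\gamma)$ then $pd^p\in p\gamma^p\OO_F$; otherwise $v_F(d)<v_F(\gamma)\le v_F(\det(A))$ forces leading-term cancellation, giving $v_F(\epsilon)=v_F(d)\ge v_F(\mu)$, so both $pd^p$ and $p\epsilon^p$ lie in $\mu^p\OO_F$. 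Combined with (a), this yields $\det(\phi(A))\equiv\phi(d)\pmod{(p\gamma^p,\mu^p)}$.

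The main obstacle is the bookkeeping in the inductive step of (a): one must verify that the modulus of the one-step congruence never slips below $p\gamma^p$ as the running partial sum $S_{j-1}$ changes valuation. This is the reason for carefully choosing the ordering of $(x,y)$ in the one-step fact at each stage according to the current valuations; once this is in place, everything else is formal.
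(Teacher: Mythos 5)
Your proposal is correct and is essentially the argument the paper intends: the paper gives no written proof of Lemma 6.1, deriving it directly from the one-step congruence $\phi(x)+\phi(y)\equiv\phi(w)\pmod{(px^p,z^p)}$ stated just above it, and your induction over the Leibniz expansion (plus the case analysis in (b)) is precisely the bookkeeping being left implicit. The only cosmetic remark is that in (b), when the modulus produced is $p\epsilon^p$, you can dispense with the case split entirely since $v_F(\epsilon)\ge v_F(\mu)$ already puts $p\epsilon^p$ in $\mu^p\OO_F$.
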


     In the next section we will need to know the
valuations of the determinants of certain square
matrices with entries in $K_0$.  The following lemma
will allow us to compute these valuations:

\begin{lemma} \label{tval}
Let $F$ be a local field with residue characteristic
$p$, let $\beta_1,\dots,\beta_k\in F^{\times}$, and set
$r_i=-v_F(\beta_i)$.  Assume that
$r_1\le r_1\le\dots\le r_k$, and that for every
$1\le i<j\le k$ such that $r_i=r_j$, the images of
$\beta_i,\dots,\beta_j$ in $\M_F^{-r_i}/\M_F^{-r_i+1}$
are linearly independent over $\F_p$.  Set
\[M=\begin{bmatrix}
\beta_1&\beta_1^p&\cdots&\beta_1^{p^{k-1}} \\
\beta_2&\beta_2^p&\cdots&\beta_2^{p^{k-1}} \\
\vdots&\vdots&&\vdots \\
\beta_k&\beta_k^p&\cdots&\beta_k^{p^{k-1}}
\end{bmatrix}.\]
Then $v_F(\det(M))$ is equal to the minimum of the
valuations of the $k!$ terms in the Leibniz expansion of
$\det(M)$.  More precisely,
\begin{align*}
v_F(\det(M))
&=v_F(\beta_1\beta_2^p\dots\beta_k^{p^{k-1}}) \\
&=-(r_1+pr_2+\dots+p^{k-1}r_k).
\end{align*}
\end{lemma}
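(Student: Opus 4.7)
The plan is to compute $v_F(\det(M))$ by first identifying the minimum valuation among the $k!$ Leibniz terms via the rearrangement inequality, then showing that the terms attaining this minimum do not cancel by reducing modulo $\M_F$ and invoking the classical Moore determinant identity in the residue field. The Leibniz term indexed by a permutation $\tau$ of $\{1,\dots,k\}$ is $\operatorname{sgn}(\tau)\prod_{i=1}^k\beta_i^{p^{\tau(i)-1}}$, of valuation $-\sum_ip^{\tau(i)-1}r_i$. Applied to the non-decreasing sequences $(r_1,\dots,r_k)$ and $(p^0,\dots,p^{k-1})$, the rearrangement inequality yields $\sum_ip^{\tau(i)-1}r_i\le\sum_ip^{i-1}r_i$, with equality iff $\tau$ preserves each maximal block of indices on which $r_i$ is constant. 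Setting $v^*=-(r_1+pr_2+\cdots+p^{k-1}r_k)$, every Leibniz term has valuation $\ge v^*$, and those of valuation exactly $v^*$ are precisely the block-preserving ones.

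Next I reduce modulo $\M_F$. Write $\beta_i=c_i\pi_F^{-r_i}$ with $c_i\in\OO_F^{\times}$, and let $\bar c_i$ denote the image of $c_i$ in the residue field $\OO_F/\M_F$. Since $p\in\M_F$, raising to a $p$-th power commutes with reduction mod $\M_F$; hence $(\pi_F^{r_i}\beta_i)^{p^{j-1}}\equiv\bar c_i^{p^{j-1}}\pmod{\M_F}$. Multiplying $\det(M)$ by $\pi_F^{-v^*}$, the non-block-preserving Leibniz terms have strictly positive valuation and vanish mod $\M_F$, leaving
\[
\pi_F^{-v^*}\det(M)\equiv\sum_{\tau\text{ block-preserving}}\operatorname{sgn}(\tau)\prod_{i=1}^k\bar c_i^{p^{\tau(i)-1}}\pmod{\M_F}.
\]

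The right-hand side factors over the maximal blocks $B_1,\dots,B_s$: a block-preserving $\tau$ decomposes as a product of permutations of the individual blocks, so the sum equals $\prod_\ell\det\bigl([\bar c_i^{p^{j-1}}]_{i,j\in B_\ell}\bigr)$. For the block $B_\ell=\{i_\ell+1,\dots,i_\ell+m_\ell\}$, setting $\bar d_\mu=\bar c_{i_\ell+\mu}^{p^{i_\ell}}$ rewrites this factor as the Moore determinant $\det([\bar d_\mu^{p^{\nu-1}}]_{1\le\mu,\nu\le m_\ell})$. By the classical identity for Moore determinants in characteristic $p$, this determinant is nonzero iff $\bar d_1,\dots,\bar d_{m_\ell}$ are $\F_p$-linearly independent in the residue field; since the $p^{i_\ell}$-th power map is an injective $\F_p$-linear endomorphism of any field of characteristic $p$, this is equivalent to $\F_p$-linear independence of $\bar c_{i_\ell+1},\dots,\bar c_{i_\ell+m_\ell}$, which is exactly the hypothesis translated across the reduction map. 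The main technical point is the reduction modulo $\M_F$: in characteristic zero, $p$-th powers do not distribute over sums, but they do modulo $\M_F$, and it is this that licences the passage from $M$ to a Moore matrix in the residue field.
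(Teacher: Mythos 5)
Your proof is correct and follows essentially the same route as the paper's: isolate the block-diagonal (block-preserving) contributions as the unique minimal-valuation terms, then show they do not cancel by factoring each block into a Moore determinant that is a unit precisely because of the $\F_p$-linear independence hypothesis. The only cosmetic differences are that you argue via the Leibniz expansion and the rearrangement inequality (making explicit the step the paper justifies with ``since $s_1<\dots<s_\ell$'') and pass to the residue field after normalizing by $\pi_F^{-v^*}$, whereas the paper works with the block-diagonal matrix $M'$ and unit-valued Moore determinants in $\OO_F$.
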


\begin{proof}
Let $s_1<\dots<s_{\ell}$ be the distinct elements of
$\{r_1,\dots,r_k\}$ and set
\[m_j=|\{1\le i\le k:r_i=s_j\}|.\]
For $1\le j\le\ell$ set $c_j=m_1+\dots+m_{j-1}$.  Then
$r_{c_j+1}=\dots=r_{c_j+{m_j}}=s_j$.  Set
\begin{align*}
M_j&=\begin{bmatrix}
\beta_{c_j+1}^{p^{c_j}}&\beta_{c_j+1}^{p^{c_j+1}}&\cdots
&\beta_{c_j+1}^{p^{c_j+m_j-1}} \\
\beta_{c_j+2}^{p^{c_j}}&\beta_{c_j+2}^{p^{c_j+1}}&\cdots
&\beta_{c_j+2}^{p^{c_j+m_j-1}} \\
\vdots&\vdots&&\vdots \\
\beta_{c_j+m_j}^{p^{c_j}}&\beta_{c_j+m_j}^{p^{c_j+1}}&\cdots
&\beta_{c_j+m_j}^{p^{c_j+m_j-1}}
\end{bmatrix} \\[3mm]
M'&=\begin{bmatrix}
M_1&0&\cdots&0 \\
0&M_2&\cdots&0 \\
\vdots&\vdots&\ddots&\vdots \\
0&0&\cdots&M_{\ell}
\end{bmatrix}.
\end{align*}
Thus $M'$ is obtained from $M$ by replacing all entries
outside the diagonal blocks with 0.  Since
$s_1<\dots<s_{\ell}$ we get
\begin{alignat}{2} \nonumber
\det(M)&\equiv\det(M')&&
\pmod{\beta_1\beta_2^p\dots\beta_k^{p^{k-1}}\M_F} \\
&\equiv\det(M_1)\det(M_2)\dots\det(M_{\ell})&&
\pmod{\beta_1\beta_2^p\dots\beta_k^{p^{k-1}}\M_F}.
\label{detprod}
\end{alignat}
We have
\begin{equation} \label{mooredet}
\det(M_j)=\beta_{c_j+1}^{p^{c_j}+p^{c_j+1}+\dots+p^{c_j+m_j-1}}
\begin{vmatrix}1&1&\cdots&1 \\
\mu_2&\mu_2^p&\cdots&\mu_2^{p^{m_j-1}} \\
\vdots&\vdots&&\vdots \\
\mu_{m_j}&\mu_{m_j}^p&\cdots&\mu_{m_j}^{p^{m_j-1}}
\end{vmatrix},
\end{equation}
where $\mu_h=\beta_{c_j+h}^{p^{c_j}}\beta_{c_j+1}^{-p^{c_j}}\in
\OO_F^{\times}$.  Since the images of
$\beta_{c_j+1},\beta_{c_j+2},\dots,\beta_{c_j+m_j}$ in
$\M_F^{-s_j}/\M_F^{-s_j+1}$ are linearly independent
over $\F_p$, the images of $1,\mu_2,\dots,\mu_{m_j}$ in
$\OO_F/\M_F$ are also linearly independent over $\F_p$.
It follows from the theory of the Moore determinant
\cite[Lemma 1.3.3]{Go96} that the determinant on
the right side of (\ref{mooredet}) is a unit.  Hence
\begin{align*}
v_F(\det(M_j))
&=(p^{c_j}+p^{c_j+1}+\dots+p^{c_j+m_j-1})v_F(\beta_{c_j+1}) \\
&=-(p^{c_j}+p^{c_j+1}+\dots+p^{c_j+m_j-1})s_j \\
&=-(p^{c_j}r_{c_j+1}+p^{c_j+1}r_{c_j+2}+\dots+p^{c_j+m_j-1}r_{c_j+m_j}).
\end{align*}
Combining this formula with (\ref{detprod}) gives the
desired result.
\end{proof}

\section{Field extensions with a generator} \label{gen}

In Proposition~\ref{hi he} we constructed a tower of
Galois extensions
\[K_0\subset K_1\subset\dots\subset K_{2n}\subset
K_{2n+1}\]
such that $\Gal(K_i/K_0)\cong C_p^i$ for $1\le i\le2n$
and $\Gal(K_{2n+1}/K_0)\cong H(n)$.  In
Proposition~\ref{extra special p ext} we constructed a
similar tower, but with $\Gal(K_{2n+1}/K_0)\cong M(n)$.
In this section we specialize these constructions to
produce $H(n)$-extensions and $M(n)$-extensions
$K_{2n+1}/K_0$ together with $Y\in K_{2n+1}$ such that
$v_{2n+1}(Y)\equiv-b_1\pmod{p^{2n+1}}$, where $b_1$ is
the smallest lower ramification number of
$K_{2n+1}/K_0$.  Since $p\nmid b_1$ it follows that
$K_{2n+1}=K_0(Y)$.  In the next section we will use the
generator $Y$ to construct Galois scaffolds for these
extensions.

     The extensions constructed in
Propositions~\ref{hi he} and \ref{extra special p ext}
depend on the choice of elements $a_1,\dots,a_{2n+1}$ in
$K_0$ satisfying certain conditions on the valuations
$v_0(a_i)=-u_i$.  In this section we retain these
hypotheses; in particular, in the cases where
$K_{2n+1}/K_0$ is an $H(n)$-extension we assume that the
condition $u_{2n+1}>u_n+u_{2n}$ from
Propositions~\ref{hi he}(c) holds.  We impose further
constraints on the $a_i$ by requiring that there are
$c,\omega_i\in K_0$ such that $a_i=c\omega_i^{p^{2n}}$
for $1\le i\le2n+1$.  Set $r=-v_0(c)$ and
$m_i=-v_0(\omega_i)$ for $1\le i\le2n+1$.  Then
$u_i=-v_0(a_i)=r+p^{2n}m_i$.

     For $1\le i\le2n$ let $\alpha_i\in K^{sep}$ satisfy
$\alpha_i^p-\alpha_i=a_i$.  Recall from
Propositions~\ref{hi he} and \ref{extra special p ext}
that we defined
\begin{align*}
B&=a_1\alpha_{n+1}+a_2\alpha_{n+2}+\dots+a_n\alpha_{2n}
\\
C&=-\sum_{i=1}^{p-1}p^{-1}\binom{p}{i}\alpha_1^ia_1^{p-i}.
\end{align*}
Set $E=B$ in the case where $K_{2n+1}/K_0$ is an
$H(n)$-extension, and $E=B+C$ in the case where
$K_{2n+1}/K_0$ is an $M(n)$-extension.  Let
$\alpha_{2n+1}\in K^{sep}$ satisfy
\[\alpha_{2n+1}^p-\alpha_{2n+1}=E+a_{2n+1}.\]
We define the following elements of $K_{2n+1}^{2n+1}$:
\[\Ev=\begin{bmatrix}0\\\vdots\\0\\E\end{bmatrix}\;\;\;
\omegav=\begin{bmatrix}\omega_1\\\vdots\\\omega_{2n}
\\\omega_{2n+1}\end{bmatrix}\;\;\;\alphav=\begin{bmatrix}
\alpha_1\\\vdots\\\alpha_{2n}\\\alpha_{2n+1}\end{bmatrix}.\]
Then we can write our system of Artin-Schreier equations
as $\phi(\alphav)-\alphav=c\phi^{2n}(\omegav)+\Ev$.  Now
define
\begin{equation} \label{Y}
Y=\det([\alphav,\omegav,\phi(\omegav),\dots,
\phi^{2n-1}(\omegav)]).
\end{equation}
Then $Y\in K_{2n+1}$.  By expanding the right side of
(\ref{Y}) in cofactors along the first column we get
\begin{equation} \label{Yexp}
Y=t_1\alpha_1+t_2\alpha_2+\dots+t_{2n+1}\alpha_{2n+1}
\end{equation}
with $t_i\in K_0$.

     Since $a_1,\dots,a_{2n}$ are reduced Artin-Schreier
constants, it follows from Lemma~\ref{tval} that
\begin{align*}
\hspace{-1mm}v_0(t_i)
&=v_0(\omega_1\omega_2^p\dots\omega_{i-1}^{p^{i-2}}
\omega_{i+1}^{p^{i-1}}\omega_{i+2}^{p^i}\dots
\omega_{2n+1}^{p^{2n-1}}) \\
&=-(m_1+pm_2+\dots+p^{i-2}m_{i-1}+p^{i-1}m_{i+1}
+p^im_{i+2}+\dots+p^{2n-1}m_{2n+1}).
\end{align*}
Hence for $1\le i\le2n$ we have
\begin{align*}
v_0(t_{i+1})-v_0(t_i)&=p^{i-1}m_{i+1}-p^{i-1}m_i \\
v_{2n+1}(t_{i+1})-v_{2n+1}(t_i)
&=p^{2n+i}(m_{i+1}-m_i) \\
&=p^i(u_{i+1}-u_i) \\
&=b_{i+1}-b_i,
\end{align*}
where $b_1,\dots,b_{2n+1}$ are the lower ramification
numbers of $K_{2n+1}/K_0$.  For $1\le i,j\le2n+1$ we
get
\begin{equation} \label{bdiff}
v_{2n+1}(t_j)-v_{2n+1}(t_i)=b_j-b_i.
\end{equation}
By Propositions~\ref{hi he}(c) and
\ref{extra special p ext}(c) we have
$v_0(\alpha_i)=-p^{-1}u_i$ for $1\le i\le2n+1$.  It
follows that for $1\le j\le2n$ and $i\ge0$ we have
\begin{align} \nonumber
v_0(t_{j+1}^{p^i}\alpha_{j+1})-v_0(t_j^{p^i}\alpha_j)
&=p^{i-2n-1}(b_{j+1}-b_j)-p^{-1}(u_{j+1}-u_j) \\
&=(p^{i+j-2n-1}-p^{-1})(u_{j+1}-u_j). \label{talphadiff}
\end{align}

\begin{proposition} \label{induct}
Let $K_{2n+1}/K_0$ be an extension constructed as in
Proposition~\ref{hi he} or
Proposition~\ref{extra special p ext} using
$a_1,a_2,\dots,a_{2n+1}\in K_0$.  
Assume that there are $c,\omega_i\in K_0$ such that
$a_i=c\omega_i^{p^{2n}}$ for $1\le i\le2n+1$.  Set
$u_i=-v_0(a_i)$, and assume $u_{2n}<e_0$ and
\[p^{2n}u_n+p^{2n-1}u_{2n}<b_{2n+1}.\]
If $K_{2n+1}/K_0$ is an $H(n)$-extension assume
(\ref{hineq1}), (\ref{hineq3}), and
$u_{2n+1}>u_n+u_{2n}$, while if $K_{2n+1}/K_0$ is an
$M(n)$-extension assume (\ref{mineq1})--(\ref{mineq5})
and
\[p^{2n+1}(1-p^{-1}+p^{-2})u_1<b_{2n+1}.\]
Define $Y$ as in (\ref{Y}).  Then for $0\le i\le2n$ we
have the following congruences modulo
$pt_{2n+1-i}^{p^i}\alpha_{2n+1-i}^p$:
\begin{equation} \label{phiiY}
\phi^i(Y)\equiv
\det([\alphav+\Ev+\phi(\Ev)+\dots+\phi^{i-1}(\Ev),
\phi^i(\omegav),\dots,\phi^{2n-1+i}(\omegav)]).
\end{equation}
\end{proposition}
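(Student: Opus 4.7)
The plan is to induct on $i$. The base case $i=0$ is immediate from (\ref{Y}): the sum $\sum_{j=0}^{-1}\phi^j(\Ev)$ is empty, and the right-hand side of (\ref{phiiY}) collapses to the definition of $Y$, which already lies in $K_{2n+1}$.

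For the inductive step from $i$ to $i+1$ with $0\le i\le 2n-1$, write $M_i$ for the matrix appearing on the right-hand side of (\ref{phiiY}). I plan to pass from step $i$ to step $i+1$ in three moves. First, apply $\phi$ entrywise to the inductive congruence $\phi^i(Y)\equiv\det(M_i)\pmod{pt_{2n+1-i}^{p^i}\alpha_{2n+1-i}^p}$, using the near-additivity of $\phi$ recorded at the start of Section~5, which turns a congruence modulo $z$ into one modulo a power of $z$ or $pz$. Second, push $\phi$ inside the determinant by invoking Lemma~\ref{phidet}(b), with the dominant Leibniz term $\gamma$ in $\det(M_i)$ identified via Lemma~\ref{tval} applied to the $2n\times 2n$ block in $M_i$ consisting of $\phi^k(\omega_j)$'s. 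Third, rewrite $\phi(M_i)$ using the Artin-Schreier relation $\phi(\alphav)-\alphav=c\phi^{2n}(\omegav)+\Ev$ and multilinearity of $\det$.

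Explicitly, in the third move the first column of $\phi(M_i)$ is $\phi(\alphav+\sum_{j=0}^{i-1}\phi^j(\Ev))$, which by entrywise near-additivity is congruent to $\phi(\alphav)+\sum_{j=1}^i\phi^j(\Ev)=\alphav+c\phi^{2n}(\omegav)+\sum_{j=0}^i\phi^j(\Ev)$, while the remaining columns become $\phi^{i+1}(\omegav),\ldots,\phi^{2n+i}(\omegav)$. Splitting the first column by linearity of $\det$ produces $\det(M_{i+1})$ plus a determinant whose first column is $c\phi^{2n}(\omegav)$; the restriction $i+1\le 2n\le 2n+i$ (which holds exactly because $0\le i\le 2n-1$) guarantees that $\phi^{2n}(\omegav)$ already appears among the other columns, so this second determinant vanishes.

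The hard part will be the bookkeeping: each of the three moves introduces an error modulus, and I must check that the minimum of these still dominates the target modulus $pt_{2n-i}^{p^{i+1}}\alpha_{2n-i}^p$ at step $i+1$. This reduces to numerical inequalities on the valuations of $t_j$, $\alpha_j$, $c$ and $E$ relative to $e_0$, which I will handle using (\ref{bdiff}), (\ref{talphadiff}), and the base valuation $v_0(\alpha_i)=-p^{-1}u_i$ furnished by Propositions~\ref{hi he}(c) and \ref{extra special p ext}(c), together with the standing hypotheses $u_{2n}<e_0$ and $p^{2n}u_n+p^{2n-1}u_{2n}<b_{2n+1}$, supplemented in the metacyclic case by $p^{2n+1}(1-p^{-1}+p^{-2})u_1<b_{2n+1}$. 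These quantitative constraints are tailored precisely to make each error from the induction absorb into the modulus claimed at the next step.
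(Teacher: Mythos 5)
Your proposal follows essentially the same route as the paper's proof: induction on $i$ with base case (\ref{Y}), identification of the dominant Leibniz term via Lemma~\ref{tval} and (\ref{talphadiff}), application of Lemma~\ref{phidet} to commute $\phi$ past the determinant, substitution of the Artin--Schreier relation in the first column, and the observation that the column $c\phi^{2n}(\omegav)$ kills the extra determinant because $i+1\le 2n\le 2n+i$. The deferred ``bookkeeping'' is exactly what the paper carries out --- in particular the comparison showing $v_0(t_{2n+1}^{p^i}E^{p^{i-1}})>v_0(t_{2n-i}^{p^i}\alpha_{2n-i})$ (which is where the extra $M(n)$-hypothesis enters) and the estimate $v_0((pt_{2n+1-i}^{p^i}\alpha_{2n+1-i}^p)^p)>v_0(p(t_{2n-i}^{p^i}\alpha_{2n-i})^p)$ from $u_{2n}<e_0$ --- and you have named the correct ingredients for it.
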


\begin{proof}
If $K_{2n+1}/K_0$ is an $H(n)$-extension then
$v_0(E)\ge-u_n-p^{-1}u_{2n}$, while if $K_{2n+1}/K_0$ is
an $M(n)$-extension then
\[v_0(E)\ge\min\{-u_n-p^{-1}u_{2n},-(p-1+p^{-1})u_1\}.\]
In either case it follows from the hypotheses that
\begin{equation} \label{b2n1}
b_{2n+1}>-p^{2n}v_0(E).
\end{equation}

     We use induction on $i$.  The case $i=0$ follows
from (\ref{Y}).  Let $0\le i\le 2n-1$ and assume that
the claim holds for $i$.  It follows from the hypotheses
that $u_1\le\dots\le u_{2n+1}$, and hence that
$-v_0(\omega_1)\le\dots\le-v_0(\omega_{2n+1})$.
Therefore by Lemma~\ref{tval} the minimum valuation of
the $(2n+1)!$ terms in the Leibniz expansion of the
determinant (\ref{phiiY}) is equal either to
$v_0(t_j^{p^i}\alpha_j)$ for some $1\le j\le2n+1$, or to
$v_0(t_{2n+1}^{p^i}E^{p^{i-1}})$.  By (\ref{talphadiff})
we get
\begin{align*}
v_0(t_{j+1}^{p^i}\alpha_{j+1})-v_0(t_j^{p^i}\alpha_j)
&\le0 \text{ for }1\le j\le2n-i \\
v_0(t_{j+1}^{p^i}\alpha_{j+1})-v_0(t_j^{p^i}\alpha_j)
&\ge0\text{ for }2n-i\le j\le2n.
\end{align*}
Therefore the minimum of $v_0(t_j^{p^i}\alpha_j)$ is
achieved when $j=2n-i$.  By (\ref{bdiff}), (\ref{b2n1}).
and Corollary~\ref{upper vs lower}(b) we get
\begin{align*}
v_0(t_{2n+1}^{p^i}E^{p^{i-1}})
-v_0(t_{2n-i}^{p^i}\alpha_{2n-i})
&=p^{i-2n-1}(b_{2n+1}-b_{2n-i})+p^{i-1}v_0(E)
+p^{-1}u_{2n-i} \\
&>-p^{i-2n-1}b_{2n-i}+p^{-1}u_{2n-i}\ge0.
\end{align*}
Hence the minimum valuation of the $(2n+1)!$ terms in
the Leibniz expansion of (\ref{phiiY}) is equal to
$v_0(t_{2n-i}^{p^i}\alpha_{2n-i})$.  For
$1\le i\le2n-1$ it follows from (\ref{bdiff}) and the
assumption $u_{2n}<e_0$ that
\begin{align*}
v_0((pt_{2n+1-i}^{p^i}\alpha_{2n+1-i}^p)^p)
-v_0(p(t_{2n-i}^{p^i}\alpha_{2n-i})^p)\hspace{-2cm} \\
&=(p-1)e_0+p^{i-2n}(b_{2n+1-i}-b_{2n-i})-pu_{2n+1-i}
+u_{2n-i} \\
&=(p-1)e_0+(u_{2n+1-i}-u_{2n-i})-pu_{2n+1-i}+u_{2n-i} \\
&=(p-1)(e_0-u_{2n+1-i})>0.
\end{align*}
Hence by equation (\ref{Y}) and Lemma~\ref{phidet}(a)
(for the case $i=0$), and Lemma~\ref{phidet}(b) (for the
cases $1\le i\le2n-1$) we get the following congruences
modulo $pt_{2n-i}^{p^{i+1}}\alpha_{2n-i}^p$:
\begin{align*}
\phi^{i+1}(Y)&\equiv
\det([\phi(\alphav)+\phi(\Ev)+\phi^2(\Ev)+\dots
+\phi^i(\Ev),\phi^{i+1}(\omegav),\dots,
\phi^{2n+i}(\omegav)]) \\
&\equiv\det([\alphav+c\phi^{2n}(\omega)+\Ev+\phi(\Ev)+
\dots+\phi^i(\Ev),
\phi^{i+1}(\omegav),\dots,\phi^{2n+i}(\omegav)]) \\
&\equiv\det([\alphav+\Ev+\phi(\Ev)+\dots+\phi^i(\Ev),
\phi^{i+1}(\omegav),\dots,\phi^{2n+i}(\omegav)]).
\end{align*}
Note that the last congruence holds because
$i+1\le2n\le2n+i$.
\end{proof}

\begin{corollary} \label{Yval}
Let $K_{2n+1}/K_0$ be an extension which satisfies the
hypotheses of Proposition~\ref{induct}, and define $Y$
as in (\ref{Y}).  Then $v_{2n+1}(Y)=-b_1+v_{2n+1}(t_1)$
and $K_{2n+1}=K_0(Y)$.
\end{corollary}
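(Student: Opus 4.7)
The plan is to apply Proposition~\ref{induct} at $i=2n$, which yields the congruence $Y^{p^{2n}} \equiv D \pmod{pt_1^{p^{2n}}\alpha_1^p}$ where $D$ denotes the right-hand side of~(\ref{phiiY}). A quick check shows that $v_{2n+1}(pt_1^{p^{2n}}\alpha_1^p)$ strictly exceeds the target value $p^{2n}(v_{2n+1}(t_1)-b_1)$: this reduces to $(p-1)u_1<pe_0$, which follows from $u_1\le u_{2n}<e_0$. Hence it suffices to compute $v_{2n+1}(D)$ and then divide by $p^{2n}$.

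The crucial structural input is that $a_j = c\omega_j^{p^{2n}}$ gives $\phi^{2n+k}(\omegav) = c^{-p^k}\phi^k(\aa)$ with $\aa := (a_1,\ldots,a_{2n+1})^T = \phi(\alphav)-\alphav-\Ev$. Factoring out powers of $c^{-1}$ produces
\[D = c^{-\sigma}\det\bigl([\alphav + \Rv,\, \aa,\, \phi(\aa),\, \ldots,\, \phi^{2n-1}(\aa)]\bigr),\]
with $\sigma = 1+p+\cdots+p^{2n-1}$ and $\Rv=\sum_{j=0}^{2n-1}\phi^j(\Ev)$. Using the approximate additivity of $\phi$ in characteristic $0$ (i.e., $\phi(x+y)\equiv\phi(x)+\phi(y)\pmod{p}$) together with $\aa = \phi(\alphav)-\alphav-\Ev$, I would perform the column operations of adding column~1 into column~2, the new column~2 into column~3, and so on, obtaining modulo controlled error
\[D \equiv c^{-\sigma}\det\Bigl(\bigl[\phi^k(\alphav) + \textstyle\sum_{j=k}^{2n-1}\phi^j(\Ev)\bigr]_{k=0}^{2n}\Bigr).\]
Each $\phi^j(\Ev)$ lies along the last standard basis vector, so by multilinearity all contributions with two or more $\phi^j(\Ev)$ terms vanish. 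What remains is the Moore-type determinant $\det([\phi^k(\alphav)]_{k=0}^{2n})$ plus single-correction terms of shape $(\sum_j E^{p^j})\cdot M_k$ for certain $(2n)\times(2n)$ minors $M_k$.

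The valuation of the Moore-type determinant is handed to us by Lemma~\ref{tval}, applied with $\beta_j=\alpha_j$ and $r_j = p^{2n}u_j$. The required $\F_p$-linear independence when some $u_j$'s coincide transfers from the reduced Artin-Schreier hypothesis on the $a_j$ via $p$-th powers and characteristic-$0$ approximations. A direct numerical check using $u_i = r + p^{2n}m_i$ and the formula for $v_0(t_1)$ derived at the start of Section~\ref{gen} yields $v_{2n+1}(c^{-\sigma}\det([\phi^k(\alphav)])) = p^{2n}(v_{2n+1}(t_1) - b_1)$. The single-correction terms, together with the characteristic-$0$ error terms from the column operations, are shown to have strictly larger $v_{2n+1}$ using the hypothesis $p^{2n}u_n + p^{2n-1}u_{2n} < b_{2n+1}$ and, in the $M(n)$ case, the analogous bound involving $u_1$. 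Taking $p^{2n}$-th roots of valuations then yields $v_{2n+1}(Y) = -b_1 + v_{2n+1}(t_1)$.

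The final assertion follows at once: since $t_1\in K_0$, we have $v_{2n+1}(t_1)\in p^{2n+1}\Z$, so $v_{2n+1}(Y)\equiv -b_1 \pmod{p^{2n+1}}$; since $p\nmid b_1 = u_1$ by the reduced Artin-Schreier hypothesis, $v_{2n+1}(Y)$ is coprime to $p$, forcing $[K_{2n+1}:K_0(Y)]=1$ and hence $K_0(Y) = K_{2n+1}$. The main obstacle I anticipate is the careful bookkeeping of error terms arising from the failure of $\phi$ to be additive in characteristic $0$: each column operation contributes an error of order $p$ times a product of matrix entries, and each such error, along with the modular error $pt_1^{p^{2n}}\alpha_1^p$ from Proposition~\ref{induct}, must be checked to have strictly greater $v_{2n+1}$ than the main Moore-term computed above.
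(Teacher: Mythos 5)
Your proposal is correct in outline and shares the paper's skeleton: apply Proposition~\ref{induct} at $i=2n$, check that the modulus $pt_1^{p^{2n}}\alpha_1^p$ exceeds the target valuation (your reduction to $(p-1)u_1<pe_0$ is exactly the paper's use of $u_1\le u_{2n}<e_0$), compute the valuation of the resulting determinant, and conclude from $t_1\in K_0$ and $p\nmid b_1$. Where you genuinely diverge is the middle step. The paper reads the determinant's valuation directly off the Leibniz-term analysis already performed in the proof of Proposition~\ref{induct}: cofactor expansion along the first column gives terms $t_j^{p^{2n}}\alpha_j$ whose valuations are controlled by Lemma~\ref{tval} applied to the $\omega$-matrix, the differences (\ref{talphadiff}) at $i=2n$ show the minimum occurs at $j=1$, and the single inequality $v_0(E^{p^{2n-1}}t_{2n+1}^{p^{2n}})>v_0(t_1^{p^{2n}}\alpha_1)$, coming from (\ref{bdiff}) and (\ref{b2n1}), disposes of the $E$-contributions. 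You instead substitute $\phi^{2n}(\omegav)=c^{-1}(\phi(\alphav)-\alphav-\Ev)$ and column-reduce to a Moore determinant in the $\alpha_j$'s, then apply Lemma~\ref{tval} with $\beta_j=\alpha_j$; your numerical identity $v_{2n+1}(c^{-\sigma}\det([\phi^k(\alphav)]))=p^{2n}(v_{2n+1}(t_1)-b_1)$ does check out against the formulas for $v_0(t_1)$ in Section~\ref{gen}. The two points you flag as needing work are real but surmountable: the $\F_p$-independence transfers from the $a_j$ to the $\alpha_j$ (a relation $\sum n_h\alpha_h\in\M_{2n+1}^{-p^{2n}u+1}$ forces, after raising to the $p$th power and using $\alpha_h^p=\alpha_h+a_h$, a relation $\sum n_ha_h\in\M_0^{-u+1}$, contradicting the reduced Artin--Schreier hypothesis), and the non-additivity errors of $\phi$ accumulated over $2n$ column operations can each be bounded below the main term using the same hypotheses. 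What your route costs is exactly this extra error bookkeeping, which the paper avoids by staying in the $\omega$-coordinates, where Lemma~\ref{phidet} is invoked once per inductive step inside Proposition~\ref{induct} and the corollary then needs only two valuation comparisons; what it buys is a more self-contained identification of the leading term, at the price of redoing work the proposition has already packaged for you.
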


\begin{proof}
The case $i=2n$ of Proposition~\ref{induct} gives
\[Y^{p^{2n}}\equiv
\det([\alphav+\Ev+\phi(\Ev)+\dots+\phi^{2n-1}(\Ev),
\phi^{2n}(\omegav),\dots,\phi^{4n-1}(\omegav)])
\pmod{pt_1^{p^{2n}}\alpha_1^p}.\]
It follows from (\ref{bdiff}) and (\ref{b2n1}) that
$v_0(E^{p^{2n-1}}t_{2n+1}^{p^{2n}})
>v_0(t_1^{p^{2n}}\alpha_1)$.  Hence by
(\ref{talphadiff}) we get
\[v_0(\det([\alphav+\Ev+\dots+\phi^{2n-1}(\Ev),
\phi^{2n}(\omegav),\dots,\phi^{4n-1}(\omegav)]))
=v_0(t_1^{p^{2n}}\alpha_1).\]
Since $u_1\le u_{2n}<e_0$ we have
$v_0(t_1^{p^{2n}}\alpha_1)<v_0(pt_1^{p^{2n}}\alpha_1^p)$.
Therefore it follows from the above that
\begin{align*}
v_0(Y^{p^{2n}})&=v_0(t_1^{2n}\alpha_1)
=-p^{-1}u_1+p^{2n}v_0(t_1) \\
v_{2n+1}(Y)&=-b_1+v_{2n+1}(t_1).
\end{align*}
Since $t_1\in K_0$ we get
$v_{2n+1}(Y)\equiv-b_1\pmod{p^{2n+1}}$.  It follows that
$p\nmid v_{2n+1}(Y)$, and hence that $K_{2n+1}=K_0(Y)$.
\end{proof}

\section{Galois scaffolds}

A Galois scaffold is a structure that can be attached to
certain totally ramified Galois $p$-extensions.  A
Galois scaffold for $L/K$ can be used to answer
questions about the Galois module structure of $\OO_L$
and its ideals which are in general difficult to
address.  In \cite{BE18}, Byott and Elder gave
sufficient conditions for a totally ramified Galois
$p$-extension $L/K$ to admit a Galois scaffold.  In this
section we define Galois scaffolds and state the
Byott-Elder criterion for the existence of a Galois
scaffold.  In Section~\ref{appex} we will use Galois
scaffolds to get information about Galois module theory.

     For $n\geq 1$ set
$\SS_{p^n}=\{0,1,\dots,p^n-1\}$.  Write the
base-$p$ expansion of $s\in\SS_{p^n}$ as
$s=\sum_{i=0}^{n-1}s_{(i)}p^{i}$, with $0\le s_{(i)}<p$.
Let $K$ be a local field with residue characteristic
$p$, let $L/K$ be a totally ramified Galois extension
of degree $p^n$, and set $G=\Gal(L/K)$.  Assume that the
lower ramification numbers $b_1\leq \hdots \leq b_n$ of
$L/K$ are relatively prime to $p$.  Associated to the
extension $L/K$ there is a function $\bb:\SS_{p^n}\to\Z$
defined by
\[\bb(s)=\sum_{i=1}^n s_{(n-i)}p^{n-i}b_i.\]

     Let $r:\Z\to \SS_{p^n}$ be the residue function,
defined by $r(a)\in\SS_{p^n}$ and
$r(a)\equiv a\pmod{p^n}$. The assumption $p\nmid b_i$ for
$1\leq i\leq n$ implies that
$r\circ(-\bb):\SS_{p^n}\to\SS_{p^n}$ is a bijection.
Let $\aa:\SS_{p^n}\to\SS_{p^n}$ be the inverse of
$r\circ(-\bb)$.  We may extend $\aa$ to a function from
$\Z$ to $\SS_{p^n}$ by setting $\aa(t)=\aa(r(t))$ for
$t\in\Z$.

\begin{definition}
Let $L/K$, $b_1,\dots,b_n$, $\bb$ and $\aa$ be as above,
and let $\mathfrak{c}\geq1$.  Then a Galois scaffold for
$L/K$ with precision $\mathfrak{c}$ consists of:
\begin{enumerate}[(i)]
\item Elements $\lambda_t\in L$ for each $t\in\Z$ such
that $v_L(\lambda_t)=t$ and
$\lambda_{t}\lambda_s^{-1}\in K $ whenever
$t\equiv s\pmod{p^n}$.
\item Elements $\Psi_1,\dots,\Psi_n$ in the augmentation
ideal $(\sigma-1:\sigma\in G)$ of $K[G]$ such that for
each $1\leq i\leq n$ and $t\in\Z$,
\begin{alignat*}{3}
\Psi_i\lambda_t&\equiv
u_{i,t}\lambda_{t+p^{n-i}b_i}
&&\pmod{\lambda_{t+p^{n-i}b_i}\M_L^{\mathfrak{c}}}
&&\quad\text{ if }\aa(t)_{(n-i)}\geq 1 \\
\Psi_i\lambda_t&\equiv
0&&\pmod{\lambda_{t+p^{n-i}b_i}\M_L^{\mathfrak{c}}}
&&\quad\text{ if }\aa(t)_{(n-i)}=0
\end{alignat*}
for some $u_{i,t}\in\OO_K^\times$.
\end{enumerate}
\end{definition}

     We now describe the sufficient conditions for the
existence of a Galois scaffold given in \cite{BE18}.
Let $K_n/K_0$ be a totally ramified Galois extension of
degree $p^n$.  Set $G=\Gal(K_n/K_0)$ and let
\[\{1\}=G^{(n)}\le G^{(n-1)}\le\dots\le G^{(1)}\le
G^{(0)}=G\]
be a refinement of the ramification filtration of $G$ by
normal subgroups of $G$ such that $|G^{(i)}|=p^{n-i}$.
For $1\le i\le n$ let $K_i$ be the fixed field of
$G^{(i)}$ and choose
$\sigma_i\in G^{(i-1)}\smallsetminus G^{(i)}$.  Then
$\sigma_i|_{K_i}$ is a generator for
$\Gal(K_i/K_{i-1})\cong G^{(i-1)}/G^{(i)}\cong C_p$.

     Let $b_1\le b_2\le\dots\le b_n$ and
$u_1\le u_2\le\dots\le u_n$ be the lower and upper
ramification numbers of $K_n/K_0$, counted with
multiplicities.  Using Proposition~3 of
\cite[IV\,\S1]{cl} we find that the lower ramification
numbers of $K_i/K_0$ are $b_1\le\dots\le b_i$ and the
upper ramification numbers are $u_1\le\dots\le u_i$.
Assume that the lower ramification numbers satisfy
$p\nmid b_1$ and $b_i\equiv b_1\pmod{p^n}$ for
$1\le i\le n$.  Let $X_j\in K_j$ satisfy
$v_j(X_j)=-b_j$.  Then for $1\le i\le j$ we have
$v_j((\sigma_i-1)X_j)=b_i-b_j$.  It follows that
$(\sigma_i-1)X_j=\mu_{ij}+\epsilon_{ij}$ for some
$\mu_{ij}\in K_0$ and $\epsilon_{ij}\in K_j$ such that
$v_j(\mu_{ij})=b_i-b_j$ and
$v_j(\epsilon_{ij})>b_i-b_j$.

\begin{theorem} \label{scafcond}
Let $K_n/K_0$, $u_i$, $b_i$, $\sigma_i$, $X_i$,
$\mu_{ij}$, and $\epsilon_{ij}$ be as above.  Set
\[\cc=\min_{1\le i\le j\le n}
\{v_n(\epsilon_{ij})-v_n(\mu_{ij})-p^{n-1}u_i+p^{n-j}b_i\}.\]
If $\cc\ge1$ then $K_n/K_o$ admits a Galois scaffold
with precision $\cc$.
\end{theorem}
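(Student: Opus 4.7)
The plan is to build the scaffold data $(\lambda_t,\Psi_i)$ explicitly out of the $X_j$ and the generators $\sigma_i$ already in hand, and then verify the two defining congruences by bookkeeping on valuations.

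For condition (i), I would define, for each $t\in\Z$,
\[
\lambda_t \;=\; \pi_0^{k(t)}\prod_{j=1}^{n} X_j^{\aa(t)_{(n-j)}}, \qquad k(t)=(t+\bb(\aa(t)))/p^n.
\]
Since $v_n(X_j)=-p^{n-j}b_j$ and $v_n(\pi_0)=p^n$, we get $v_n(\lambda_t)=p^nk(t)-\bb(\aa(t))=t$, and $k(t)\in\Z$ by the very definition of $\aa$. If $t\equiv s\pmod{p^n}$ then $\aa(t)=\aa(s)$, so the $X_j$-factors cancel and $\lambda_t\lambda_s^{-1}=\pi_0^{k(t)-k(s)}\in K_0\subseteq K$.

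For condition (ii), I would first try the naive $\Psi_i=\sigma_i-1$ and identify the obstruction. Expanding via the derivation law, with $a_j=\aa(t)_{(n-j)}$, gives
\[
(\sigma_i-1)\lambda_t
=\pi_0^{k(t)}\prod_{j<i}X_j^{a_j}\left[\prod_{j\ge i}(X_j+\mu_{ij}+\epsilon_{ij})^{a_j}-\prod_{j\ge i}X_j^{a_j}\right].
\]
Each single substitution $X_j\mapsto\mu_{ij}$ (for $j\ge i$) raises the valuation of $\lambda_t$ by $p^{n-j}b_i$, and substitutions involving $\epsilon_{ij}$ or two $X$'s raise it by strictly more. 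The "diagonal" $j=i$ substitution delivers exactly the desired shift $p^{n-i}b_i$ with unit leading coefficient $a_i\mu_{ii}$ when $a_i\ge1$, and produces no contribution when $a_i=0$. The main obstacle is the \emph{off-diagonal} single substitutions $j>i$ with $a_j\ge1$: they produce terms of valuation $v_n(\lambda_t)+p^{n-j}b_i<v_n(\lambda_t)+p^{n-i}b_i$, strictly lower than the target, and so must be cancelled.

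To eliminate them, I would iteratively refine $\Psi_i$ by adjoining multiples of $(\sigma_j-1)$ for $j$ running from $n$ down to $i+1$. Since the offending coefficient $\mu_{ij}$ and the diagonal coefficient $\mu_{jj}$ both lie in $K_0$, the ratio $\mu_{ij}/\mu_{jj}\in K_0$ furnishes the correct scalar to subtract $(\mu_{ij}/\mu_{jj})(\sigma_j-1)$, applied through the appropriate lower-level prefix, so as to kill the $j$th off-diagonal leading term; the augmentation ideal is preserved under such combinations. Once all off-diagonal terms have been cancelled, the remaining error in $\Psi_i\lambda_t$ comes either from an $\epsilon_{ij}$-substitution or from higher-order $\mu$-products, and the main technical task is to check that each surviving error has valuation at least $v_n(\lambda_t)+p^{n-i}b_i+\cc$. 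The three ingredients in the formula for $\cc$ encode exactly this bookkeeping: $v_n(\epsilon_{ij})-v_n(\mu_{ij})$ is the improvement of $\epsilon_{ij}$ over its leading part, while $-p^{n-1}u_i+p^{n-j}b_i$ accounts for the valuation loss incurred by the $(\sigma_j-1)$-prefix used in the cancellation (which, through the tower, shifts by $p^{n-1}u_i$ in the worst case and recovers $p^{n-j}b_i$ from the $\mu_{ij}$ factor). The hypothesis $\cc\ge 1$ then gives precisely the scaffold congruence with precision $\cc$. The main obstacle is the refinement of $\Psi_i$: bookkeeping the cancellations so that successive corrections do not re-introduce lower-order terms at previously-treated levels, which requires working downward in $j$ and tracking the action of each correction through the tower.
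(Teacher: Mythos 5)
First, a point of comparison: the paper does not actually prove Theorem~\ref{scafcond} --- its ``proof'' is a citation to Theorem~2.10 of \cite{BE18} --- so your proposal has to be measured against that external argument rather than anything internal to the paper. Your construction of the $\lambda_t$ is correct and is the standard one: the congruence $\bb(\aa(t))\equiv -t\pmod{p^n}$ makes $k(t)$ an integer, $v_n(X_j)=-p^{n-j}b_j$ gives $v_n(\lambda_t)=t$, and condition (i) follows. You also correctly identify the first obstruction to taking $\Psi_i=\sigma_i-1$, namely the single off-diagonal substitutions $X_j\mapsto\mu_{ij}$ with $j>i$, which contribute at valuation $t+p^{n-j}b_i<t+p^{n-i}b_i$, and the triangular cancellation by $K_0$-multiples of the $\sigma_j-1$ (using $\mu_{ij},\mu_{jj}\in K_0$) is the right first-order idea.

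The genuine gap is the assertion that substitutions involving ``two $X$'s'' raise the valuation by strictly more than is needed. A term of $(\sigma_i-1)\lambda_t$ in which $m_j$ of the factors $X_j$ are replaced by $\mu_{ij}$ has valuation $t+\sum_{j\ge i}m_jp^{n-j}b_i$. Taking $m_j=2$ for a single $j>i$ (which occurs whenever $\aa(t)_{(n-j)}\ge2$, so for every $p>2$), or $m_j=m_k=1$ for distinct $j,k>i$, gives a shift of at most $2p^{n-i-1}b_i<p^{n-i}b_i$: these terms lie \emph{strictly below} the target valuation $t+p^{n-i}b_i$ and cannot be absorbed into $\lambda_{t+p^{n-i}b_i}\M_L^{\cc}$. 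They are also not cancelled by your linear corrections: the matching term of $(\mu_{ij}/\mu_{jj})(\sigma_j-1)\lambda_t$ sits at valuation $t+p^{n-j}(b_i+b_j)$, which equals $t+2p^{n-j}b_i$ only when $b_i=b_j$. Eliminating these higher-order terms forces $\Psi_i$ to contain genuine products $\prod_{j\ge i}(\sigma_j-1)^{m_j}$ (equivalently, one builds $\Psi_n,\dots,\Psi_1$ recursively, correcting $\sigma_i-1$ by polynomials in the already-constructed $\Psi_j$ for $j>i$), and verifying that every surviving error --- including those generated by the corrections themselves and by the $\epsilon_{ij}$ --- lands in $\lambda_{t+p^{n-i}b_i}\M_L^{\cc}$ is precisely where the term $-p^{n-1}u_i+p^{n-j}b_i$ in the formula for $\cc$ is earned. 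As written, your argument only controls the contributions that are linear in the substitutions, so the scaffold congruences are not established for general $t$.
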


\begin{proof}
This is proved as Theorem~2.10 in \cite{BE18}.
\end{proof}

\section{Extensions with a Galois scaffold} \label{build}

In Section~\ref{gen} we constructed totally ramified
$H(n)$-extensions and $M(n)$-extensions $K_{2n+1}/K_0$
and elements $Y\in K_{2n+1}$ such that
$K_{2n+1}=K_0(Y)$.  In this section we specialize these
constructions to produce $H(n)$-extensions and
$M(n)$-extensions which satisfy the conditions of
Theorem~\ref{scafcond}, and therefore have Galois
scaffolds.

     To apply Theorem~\ref{scafcond} we need to find
elements $X_j\in K_j$ which have the properties
specified in the theorem.  Since $K_{2n}/K_0$ is an
elementary abelian $p$-extension we can use known
results to get $X_j$ for $1\le j\le2n$:

\begin{proposition} \label{elemX}
Let $K_{2n+1}/K_0$ be an extension constructed using
Proposition~\ref{induct}.  Then for
$1\le j\le2n$ there are $X_j\in K_j$ such that
$v_j(X_j)=-b_j$ with the following property: For
$1\le i\le j$ we have
$(\sigma_i-1)X_j=\mu_{ij}+\epsilon_{ij}$ with
$\mu_{ij}\in K_0$, $\epsilon_{ij}\in K_j$,
$v_j(\mu_{ij})=b_i-b_j$, and
\[v_{2n+1}(\epsilon_{ij})-v_{2n+1}(\mu_{ij})
=p^{2n+1}e_0-(p-1)p^{2n}u_i.\]
Furthermore, we have $\mu_{jj}=1$.
\end{proposition}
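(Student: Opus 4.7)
The strategy is to construct $X_j\in K_j$ as a normalized Moore-type determinant built from $\alpha_1,\ldots,\alpha_j$ and $\omega_1,\ldots,\omega_j$, in the spirit of the element $Y$ from Section~\ref{gen} but truncated to the elementary-abelian subextension $K_j/K_0$. Concretely, letting $\alphav_j=(\alpha_1,\ldots,\alpha_j)^T$ and $\omegav_j=(\omega_1,\ldots,\omega_j)^T$, I would set
\[
\widetilde{X}_j \;=\; \det\bigl[\alphav_j,\ \phi^{2n-j+1}(\omegav_j),\ \phi^{2n-j+2}(\omegav_j),\ \ldots,\ \phi^{2n-1}(\omegav_j)\bigr],
\]
and let $C_{i,1}^{(j)}\in K_0$ be the $(i,1)$-cofactor of this matrix. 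Define $X_j:=\widetilde{X}_j/C_{j,1}^{(j)}$; the normalization is chosen so that $\mu_{jj}=1$ falls out automatically.

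Next I would compute the action of $\sigma_i$ for $1\le i\le j$. Since $\omega_k\in K_0$ and (by (\ref{hshift1}) or (\ref{mshift1})) $\sigma_i$ fixes $\alpha_k$ for every $k\ne i$, only the entry $\alpha_i$ in position $(i,1)$ is affected. By Lemma~\ref{keatings lemma}(b) we have $(\sigma_i-1)\alpha_i=1+\delta_i$ with $v_0(\delta_i)=e_0-(1-p^{-1})u_i$, so multilinearity of the determinant in row $i$ gives $(\sigma_i-1)\widetilde{X}_j=(1+\delta_i)\,C_{i,1}^{(j)}$. Dividing by $C_{j,1}^{(j)}\in K_0$ and setting $\mu_{ij}:=C_{i,1}^{(j)}/C_{j,1}^{(j)}\in K_0$ and $\epsilon_{ij}:=\delta_i\,\mu_{ij}\in K_j$ realizes the required splitting $(\sigma_i-1)X_j=\mu_{ij}+\epsilon_{ij}$ with $\mu_{jj}=1$, and immediately yields
\[
v_{2n+1}(\epsilon_{ij})-v_{2n+1}(\mu_{ij})=v_{2n+1}(\delta_i)=p^{2n+1}e_0-(p-1)p^{2n}u_i.
\]

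The cofactors $C_{i,1}^{(j)}$ are $(j-1)\times(j-1)$ Moore-type determinants in the $\omega_k$ with $k\ne i$. The $\F_p$-linear independence of the reduced Artin--Schreier constants $a_k=c\omega_k^{p^{2n}}$ transfers to the $\omega_k$'s themselves, because Frobenius is an injective $\F_p$-linear map on the perfect residue field. Hence Lemma~\ref{tval} computes the valuations of all these cofactors exactly, and a telescoping calculation using $b_{\ell+1}-b_\ell=p^\ell(u_{\ell+1}-u_\ell)=p^{\ell+2n}(m_{\ell+1}-m_\ell)$ delivers $v_j(\mu_{ij})=b_i-b_j$ on the nose.

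The main obstacle is verifying $v_j(X_j)=-b_j$, equivalently $v_j(\widetilde{X}_j)=-b_j+v_j(C_{j,1}^{(j)})$. A direct cofactor expansion of $\widetilde{X}_j$ along the first column will not suffice, because $\alpha_k\sim c^{1/p}\omega_k^{p^{2n-1}}$ to leading order makes the first column essentially proportional to $\phi^{2n-1}(\omegav_j)$, producing a near-repeated column and causing the leading terms across different rows to cancel. To resolve this I would follow the Frobenius technique of Proposition~\ref{induct}: apply $\phi^{2n}$ to $\widetilde{X}_j$ and use Lemma~\ref{phidet} to rewrite the result as a determinant whose first column becomes $\alphav_j+\sum_{\ell=0}^{2n-1}c^{p^\ell}\phi^{2n+\ell}(\omegav_j)$ modulo controlled $p\gamma^p$-type errors. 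Repeated-column annihilations eliminate all but a dominant surviving Moore determinant, whose valuation is given exactly by Lemma~\ref{tval}; pulling back through $\phi^{2n}$ then produces the required value of $v_j(\widetilde{X}_j)$ and hence the identity $v_j(X_j)=-b_j$.
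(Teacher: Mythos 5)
The paper's own ``proof'' of this proposition is a one-line citation to Lemma~3.8 of \cite{BE18}, so your write-up is a reconstruction of that argument, and it is entirely in the spirit of the determinant machinery of Section~\ref{gen} applied at level $j\le 2n$.  The parts you carry out in detail are correct: the exact identity $(\sigma_i-1)\widetilde{X}_j=(1+\delta_i)C^{(j)}_{i,1}$ (only the $(i,1)$ entry moves, since $\sigma_i$ fixes $K_{i-1}$ and, by (\ref{hshift1})/(\ref{mshift1}), the $\alpha_k$ with $k>i$); the resulting $\epsilon_{ij}=\delta_i\mu_{ij}$ with $v_{2n+1}(\delta_i)=p^{2n+1}(e_0-(1-p^{-1})u_i)=p^{2n+1}e_0-(p-1)p^{2n}u_i$ from Lemma~\ref{keatings lemma}; the transfer of $\F_p$-independence from the $a_k$ to the $\omega_k$; and the telescoping $v_j(C^{(j)}_{i+1,1})-v_j(C^{(j)}_{i,1})=p^{2n+i}(m_{i+1}-m_i)=b_{i+1}-b_i$, which is exactly why your Frobenius window $\phi^{2n-j+1},\dots,\phi^{2n-1}$ is the right normalization to get $v_j(\mu_{ij})=b_i-b_j$.

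The one genuine gap is the step you yourself flag, $v_j(X_j)=-b_j$, and the sketch you give for it misdescribes the iteration.  Applying $\phi$ to $\widetilde{X}_j$ adds $c\phi^{2n}(\omegav_j)$ to the first column, but this is annihilated by an existing column only during the first $j-1$ iterations (your window has width $j-1$, not $2n$); after $i\ge j$ iterations the accumulated first column is $\alphav_j+\sum_{\ell=0}^{i-j}c^{p^\ell}\phi^{2n+\ell}(\omegav_j)$, not the sum over $0\le\ell\le 2n-1$ that you wrote.  Moreover the surviving term $c^{p^{i-j}}\phi^{2n+i-j}(\omegav_j)$ has \emph{smaller} valuation than $\alphav_j$, so the dominant Leibniz term of $\phi^{j}(\widetilde{X}_j)$ is $c$ times $\phi^{2n}$ of the pure Moore determinant $\det[\omegav_j,\phi(\omegav_j),\dots,\phi^{j-1}(\omegav_j)]$, with no $\alpha$'s in it at all.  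Running Lemma~\ref{tval} on that term gives $v_j(\widetilde{X}_j)=-r-p^{2n}(m_1+pm_2+\dots+p^{j-1}m_j)$, which does equal $-b_j+v_j(C^{(j)}_{j,1})$, so the statement is true and your strategy closes; but you still owe the error control for Lemma~\ref{phidet} at each of the $j$ intermediate steps (identify the minimal Leibniz term, which sits in row $k=j-i$ at step $i$, and check that $p\gamma^p$ beats the next dominant term using $u_{2n}<e_0$), exactly as is done in the proof of Proposition~\ref{induct}.
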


\begin{proof}
See the proof of Lemma~3.8 in \cite{BE18}.
\end{proof}

     It follows from Propositions~\ref{elemX} and
\ref{decreasing} that
\begin{align} \nonumber
\min_{1\le i\le j\le2n}\{v_{2n+1}(\epsilon_{ij})
-v_{2n+1}(\mu_{ij})-p^{2n}u_i+p^{2n+1-j}b_i\}\hspace{-2cm}
\\
&=\min_{1\le i\le j\le2n}\{p^{2n+1}e_0-p^{2n+1}u_i
+p^{2n+1-j}b_i\} \nonumber \\
&=\min_{1\le i\le2n}\{p^{2n+1}e_0-p^{2n+1}u_i+pb_i\}
\nonumber \\
&=p^{2n+1}e_0-p^{2n+1}u_{2n}+pb_{2n}.
\label{elemc}
\end{align}
Suppose $p^{2n+1}e_0-p^{2n+1}u_{2n}+pb_{2n}>0$.  In
order to apply Theorem~\ref{scafcond} to get a Galois
scaffold for $K_{2n+1}/K_0$ we need to choose an
appropriate $X_{2n+1}\in K_{2n+1}$.  The remainder of
this section is devoted to showing that
$X_{2n+1}=t_{2n+1}^{-1}Y$ satisfies the conditions of
Theorem~\ref{scafcond} and can be used to construct
$H(n)$-extensions and $M(n)$-extensions with Galois
scaffolds under suitable assumptions on ramification
data.

\begin{theorem} \label{Hnscaf}
Let $K_0$ be a local field with residue characteristic
$p>2$.  Let $c,\omega_1,\dots,\omega_{2n+1}$ be elements
of $K_0$ and set $a_i=c\omega_i^{p^{2n}}$ for
$1\le i\le2n+1$.  For $1\le i\le2n$ let $\alpha_i$ satisfy
$\alpha_i^p-\alpha_i=a_i$ and define $K_1,\dots,K_{2n}$
by $K_i=K_{i-1}(\alpha_i)$.  Set
\[B=a_1\alpha_{n+1}+a_2\alpha_{n+2}+\dots
+a_n\alpha_{2n},\]
let $\alpha_{2n+1}$ satisfy
$\alpha_{2n+1}^p-\alpha_{2n+1}=B+a_{2n+1}$, and define
$K_{2n+1}=K_{2n}(\alpha_{2n+1})$.  Set $u_i=-v_0(a_i)$
for $1\le i\le2n+1$ and define $b_1,\dots,b_{2n+1}$
recursively by $b_1=u_1$ and
$b_{i+1}-b_i=p^i(u_{i+1}-u_i)$ for $1\le i\le2n$.
Assume that $a_1,\dots,a_{2n}$ are reduced
Artin-Schreier constants, and that
\begin{align}
u_n+(1-p^{-1})u_{2n}&<e_0 \label{Hineq1} \\
p^{-1}u_n+p^{-2}u_{2n}+(1-p^{-1})u_{2n+1}&<e_0
\label{Hineq2} \\
u_{2n}&<e_0 \label{Hineq3} \\
u_{2n+1}-p^{-2n-1}b_{2n+1}&<e_0 \label{Hineq5} \\
p^{2n}u_n+p^{2n}u_{2n}&<b_{2n+1}. \label{Hineq4}
\end{align}
Then $K_{2n+1}/K_0$ is an $H(n)$-extension with upper
ramification numbers $u_1\le\dots\le u_{2n+1}$ and lower
ramification numbers $b_1\le\dots\le b_{2n+1}$.
Furthermore, $K_{2n+1}/K_0$ has a Galois scaffold with
precision
\[\cc=\min\{b_{2n+1}-p^{2n}u_n-p^{2n}u_{2n},
p^{2n+1}e_0-p^{2n+1}u_{2n+1}+b_{2n+1}\}.\]
\end{theorem}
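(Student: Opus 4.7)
The plan is to verify the ramification-theoretic claims via Propositions~\ref{hi he} and \ref{induct}, construct suitable elements $X_j$, and then apply Theorem~\ref{scafcond}. I would first invoke Proposition~\ref{hi he}(a) using (\ref{Hineq1}) and (\ref{Hineq2}) to obtain the $H(n)$-structure together with generators $\sigma_i$ satisfying (\ref{hshift1})--(\ref{hshift4}). Since Corollary~\ref{upper vs lower}(b) gives $b_{2n+1}\le p^{2n}u_{2n+1}$, hypothesis (\ref{Hineq4}) forces $u_{2n+1}>u_n+u_{2n}$, so Proposition~\ref{hi he}(c) yields the upper ramification numbers $u_1,\dots,u_{2n+1}$. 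Proposition~\ref{decreasing} then shows that the lower ramification numbers satisfy $b_{i+1}-b_i=p^i(u_{i+1}-u_i)$, and the structure $a_i=c\omega_i^{p^{2n}}$ forces $u_{i+1}-u_i\equiv 0\pmod{p^{2n}}$, hence $b_j\equiv b_1\pmod{p^{2n+1}}$, as the scaffold definition requires.

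For $1\le j\le 2n$ I would take the elements $X_j$ supplied by Proposition~\ref{elemX}; their joint contribution to the precision in Theorem~\ref{scafcond} is $p^{2n+1}e_0-p^{2n+1}u_{2n}+pb_{2n}$ by (\ref{elemc}). For $j=2n+1$ I would set $X_{2n+1}=t_{2n+1}^{-1}Y$ with $Y$ as in (\ref{Y}); hypotheses (\ref{Hineq3}) and (\ref{Hineq4}) allow the application of Corollary~\ref{Yval}, which gives $v_{2n+1}(Y)=-b_1+v_{2n+1}(t_1)$ and hence, via (\ref{bdiff}), $v_{2n+1}(X_{2n+1})=-b_{2n+1}$.

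The main step is to analyze $(\sigma_i-1)X_{2n+1}$ through the expansion (\ref{Yexp}). Since $\sigma_i\in\Gal(K_{2n+1}/K_{i-1})$ fixes $\alpha_j$ for $j<i$, only the terms $t_j(\sigma_i-1)\alpha_j$ with $j\ge i$ contribute. Using (\ref{hshift1})--(\ref{hshift4}) and Lemma~\ref{keatings lemma}, I would identify $\mu_{i,2n+1}=t_{2n+1}^{-1}t_i$ as the leading term for $1\le i\le 2n$, and $\mu_{2n+1,2n+1}=1$ for $i=2n+1$. For $n+1\le i\le 2n$ the term $t_{2n+1}(\sigma_i-1)\alpha_{2n+1}$ also appears; hypothesis (\ref{Hineq4}) (combined with $u_{i-n}\le u_n$ and $u_i\le u_{2n}$) ensures that $v_{2n+1}(t_i)<v_{2n+1}(t_{2n+1}\alpha_{i-n})$, so $t_i$ remains the leading term while $t_{2n+1}(\sigma_i-1)\alpha_{2n+1}\equiv\alpha_{i-n}\pmod{\M_{2n+1}}$ enters $\epsilon_{i,2n+1}$ as a new cross-term of valuation $\ge-p^{2n}u_{i-n}$.

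The hard part will be verifying that the minimum of $v_{2n+1}(\epsilon_{ij})-v_{2n+1}(\mu_{ij})-p^{2n}u_i+p^{2n+1-j}b_i$ over all $1\le i\le j\le 2n+1$ collapses to the asserted $\cc$. The cross-term for $n+1\le i\le 2n$, $j=2n+1$ contributes at least $b_{2n+1}-p^{2n}u_n-p^{2n}u_{2n}$, positive by (\ref{Hineq4}); the $p\alpha_{2n+1}^{p-1}$-error at $i=j=2n+1$ contributes $p^{2n+1}e_0-p^{2n+1}u_{2n+1}+b_{2n+1}$, positive by (\ref{Hineq5}); the remaining $p\alpha_i^{p-1}$-type errors for $i\le 2n$, $j=2n+1$ yield the non-increasing quantity $p^{2n+1}e_0-p^{2n+1}u_i+b_i$, whose minimum is attained at $i=2n+1$ and so is already accounted for, and the $j\le 2n$ contribution (\ref{elemc}) exceeds the $i=j=2n+1$ one. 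After these routine comparisons, Theorem~\ref{scafcond} delivers the Galois scaffold with the stated precision $\cc$.
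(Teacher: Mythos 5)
Your proposal follows essentially the same route as the paper's proof: deduce $u_{2n+1}>u_n+u_{2n}$ from (\ref{Hineq4}) via Corollary~\ref{upper vs lower}(b) so that Proposition~\ref{hi he}(c) gives the group and ramification data, take $X_j$ from Proposition~\ref{elemX} for $j\le 2n$ and $X_{2n+1}=t_{2n+1}^{-1}Y$ via Corollary~\ref{Yval}, expand $(\sigma_i-1)X_{2n+1}$ using (\ref{Yexp}) and (\ref{hshift1})--(\ref{hshift4}), and compare the resulting error terms (the $p\alpha_i^{p-1}$-type errors, the cross-terms $(\sigma_{n+i}-1)\alpha_{2n+1}\equiv\alpha_i$, and the contribution (\ref{elemc})) exactly as the paper does to obtain the stated precision from Theorem~\ref{scafcond}. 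The proposal is correct; your explicit check that $a_i=c\omega_i^{p^{2n}}$ forces $b_j\equiv b_1\pmod{p^{2n+1}}$ and that $\mu_{n+i,2n+1}$ remains the leading term are details the paper leaves implicit, but the argument is the same.
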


\begin{proof}
By (\ref{Hineq4}) and Corollary~\ref{upper vs lower}(b)
we get $u_n+u_{2n}<u_{2n+1}$.  Therefore by
Proposition~\ref{hi he}(c), $K_{2n+1}/K_0$ is an
$H(n)$-extension with upper ramification numbers
$u_1\le\dots\le u_{2n+1}$ and lower ramification numbers
$b_1\le\dots\le b_{2n+1}$.  By (\ref{Hineq3}),
(\ref{Hineq4}), and Corollary~\ref{Yval} we get
$v_{2n+1}(Y)=-b_1+v_{2n+1}(t_1)$ and $K_{2n+1}=K_0(Y)$.
Set $X_{2n+1}=t_{2n+1}^{-1}Y$.  Then by (\ref{bdiff}) we
get
\[v_{2n+1}(X_{2n+1})=-v_{2n+1}(t_{2n+1})-b_1+v_{2n+1}(t_1)
=-b_{2n+1}.\]
Let $1\le i\le n$.  Then by (\ref{Yexp}),
(\ref{hshift1}), and (\ref{hshift3}) we have
\begin{align*}
(\sigma_i-1)(X_{2n+1})
&=t_{2n+1}^{-1}t_i(\sigma_i-1)(\alpha_i) \\
(\sigma_{n+i}-1)(X_{2n+1})
&=t_{2n+1}^{-1}t_{n+i}(\sigma_{n+i}-1)(\alpha_{n+i})
+(\sigma_{n+i}-1)(\alpha_{2n+1}) \\
(\sigma_{2n+1}-1)(X_{2n+1})
&=(\sigma_{n+i}-1)(\alpha_{2n+1}).
\end{align*}

     For $1\le i\le2n+1$ set
$\mu_{i,2n+1}=t_{2n+1}^{-1}t_i\in K_0$.  Then
$v_{2n+1}(\mu_{i,2n+1})=b_i-b_{2n+1}$ by (\ref{bdiff}).
Write $(\sigma_i-1)(X_{2n+1})
=\mu_{i,2n+1}+\epsilon_{i,2n+1}$ with
$\epsilon_{i,2n+1}\in K_{2n+1}$.  Then by
(\ref{hshift2}) and (\ref{hshift4}) we have
\begin{align*}
v_{2n+1}(\epsilon_{i,2n+1})-v_{2n+1}(\mu_{i,2n+1})
&\ge p^{2n+1}e_0-(p-1)p^{2n}u_i \\
v_{2n+1}(\epsilon_{n+i,2n+1})-v_{2n+1}(\mu_{n+i,2n+1})
&\ge\min\{p^{2n+1}e_0-(p-1)p^{2n}u_{n+i},
b_{2n+1}-b_{n+i}-p^{2n}u_i\} \\
v_{2n+1}(\epsilon_{2n+1,2n+1})-v_{2n+1}(\mu_{2n+1,2n+1})
&\ge p^{2n+1}e_0-(p-1)p^{2n}u_{2n+1}
\end{align*}
for $1\le i\le n$.  It follows that
\begin{align*}
v_{2n+1}(\epsilon_{i,2n+1})-v_{2n+1}(\mu_{i,2n+1})
-p^{2n}u_i+b_i&\ge p^{2n+1}e_0-p^{2n+1}u_i+b_i \\
v_{2n+1}(\epsilon_{n+i,2n+1})-v_{2n+1}(\mu_{n+i,2n+1})
-p^{2n}u_{n+i}+b_{n+i}
&\ge\min\{p^{2n+1}e_0-p^{2n+1}u_{n+i}+b_{n+i}, \\
&\hspace{1.4cm}b_{2n+1}-p^{2n}u_i-p^{2n}u_{n+i}\} \\
v_{2n+1}(\epsilon_{2n+1,2n+1})-v_{2n+1}(\mu_{2n+1,2n+1})
-p^{2n}u_{2n+1}+b_{2n+1}
&\ge p^{2n+1}e_0-p^{2n+1}u_{2n+1}+b_{2n+1}.
\end{align*}
Using Corollary~\ref{upper vs lower}(a) we get
\begin{equation} \label{imin}
p^{2n+1}e_0-p^{2n+1}u_i+b_i
\ge p^{2n+1}e_0-p^{2n+1}u_{2n+1}+b_{2n+1}
\end{equation}
for $1\le i\le2n$.  It follows that
\begin{align*}
\min_{1\le i\le2n+1}\{v_{2n+1}(\epsilon_{i,2n+1})
-v_{2n+1}(\mu_{i,2n+1})-p^{2n}u_i+b_i\}\ge\hspace{-5cm} \\
&\min\{b_{2n+1}-p^{2n}u_n-p^{2n}u_{2n},
p^{2n+1}e_0-p^{2n+1}u_{2n+1}+b_{2n+1}\}.
\end{align*}
Using the relation
$b_{2n+1}-b_{2n}=p^{2n}(u_{2n+1}-u_{2n})$ we get
\begin{equation} \label{rel}
p^{2n+1}e_0-p^{2n+1}u_{2n}+pb_{2n}
>p^{2n+1}e_0-p^{2n+1}u_{2n+1}+b_{2n+1}.
\end{equation}
Hence by (\ref{elemc}) and Theorem~\ref{scafcond} we
deduce that $K_{2n+1}/K_0$ has a Galois scaffold with
precision
\[\cc=\min\{b_{2n+1}-p^{2n}u_n-p^{2n}u_{2n},\;
p^{2n+1}e_0-p^{2n+1}u_{2n+1}+b_{2n+1}\}.\]
Note that $\cc\ge1$ by (\ref{Hineq4}) and
(\ref{Hineq5}).
\end{proof}

     By replacing (\ref{Hineq1})--(\ref{Hineq5}) with
the single inequality $u_{2n+1}\le e_0$ we get the
following simpler, but weaker, version of
Theorem~\ref{Hnscaf}:

\begin{corollary} \label{Hnsimp}
Let $K_{2n+1}/K_0$ be an extension constructed as in
Theorem~\ref{Hnscaf}, but with
(\ref{Hineq1})--(\ref{Hineq5}) replaced with the single
assumption $u_{2n+1}\le e_0$.  Then $K_{2n+1}/K_0$ is an
$H(n)$-extension which has a Galois scaffold with
precision $\cc=b_{2n+1}-p^{2n}u_n-p^{2n}u_{2n}$.
\end{corollary}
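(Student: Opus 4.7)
The plan is to derive this corollary directly from Theorem~\ref{Hnscaf} by performing two verifications: that the single hypothesis $u_{2n+1}\le e_0$ implies each of the five ramification inequalities (\ref{Hineq1})--(\ref{Hineq5}), so that Theorem~\ref{Hnscaf} applies, and that the precision formula from that theorem then collapses to the simpler expression stated here.

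The precision collapse is the easy step. Under $u_{2n+1}\le e_0$ one has $p^{2n+1}(e_0-u_{2n+1})\ge 0$, so
\[p^{2n+1}e_0-p^{2n+1}u_{2n+1}+b_{2n+1}\ge b_{2n+1}>b_{2n+1}-p^{2n}u_n-p^{2n}u_{2n},\]
where the strict inequality uses $u_n,u_{2n}>0$. Hence the minimum defining $\cc$ in Theorem~\ref{Hnscaf} is achieved by the first term, giving the stated precision.

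For the implications $u_{2n+1}\le e_0\Rightarrow$ (\ref{Hineq1})--(\ref{Hineq5}), inequalities (\ref{Hineq3}) and (\ref{Hineq5}) are immediate, since $u_{2n}\le u_{2n+1}\le e_0$ and $b_{2n+1}>0$. For the remaining three I would exploit the structural form of the construction: writing $r=-v_0(c)$ and $m_i=-v_0(\omega_i)$, we have $u_i=r+p^{2n}m_i$ for each $i$, and the recursion $b_{i+1}-b_i=p^i(u_{i+1}-u_i)$ from Proposition~\ref{decreasing} expresses $b_{2n+1}$ as an explicit expression in $r$ and the $m_j$. The bound $u_{2n+1}\le e_0$ then translates via $r+p^{2n}m_{2n+1}\le e_0$ into bounds on $r$ and each $m_j$, which in turn should yield the three remaining inequalities.

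The main obstacle is completing this last step cleanly for (\ref{Hineq1}) and (\ref{Hineq2}), whose asymmetric weights $1-p^{-1}$, $p^{-1}$, and $p^{-2}$ require careful arithmetic with the structural formula for the $u_i$; (\ref{Hineq4}), being additive in $u_n$ and $u_{2n}$ and directly comparable to $b_{2n+1}$ via the telescoping recursion, should follow more readily. Once all five inequalities are in hand, the conclusion of the corollary falls out by invoking Theorem~\ref{Hnscaf} and substituting the simplified precision computed above.
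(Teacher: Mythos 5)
There is a genuine gap, and it stems from a misreading of which hypotheses the corollary actually drops. In the source, the five inequalities of Theorem~\ref{Hnscaf} appear in the display order \eqref{Hineq1}, \eqref{Hineq2}, \eqref{Hineq3}, \eqref{Hineq5}, \eqref{Hineq4}, so the range ``(\ref{Hineq1})--(\ref{Hineq5})'' covers only the first \emph{four} of them; the last inequality \eqref{Hineq4}, namely $p^{2n}u_n+p^{2n}u_{2n}<b_{2n+1}$, is \emph{retained} as a hypothesis of the corollary. Your plan to deduce all five inequalities from $u_{2n+1}\le e_0$ alone cannot work: \eqref{Hineq4} makes no reference to $e_0$ and fails, for instance, when all the $m_i$ are equal (so $b_{2n+1}=u_{2n+1}=u_n=u_{2n}$, as in the first part of Example~\ref{ex} before the condition $2u\le tp^{2n}$ is imposed). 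Likewise \eqref{Hineq1} and \eqref{Hineq2} do not follow from $u_{2n+1}\le e_0$ by itself (take $u_n=u_{2n}=u_{2n+1}$ close to $e_0$, so $u_n+(1-p^{-1})u_{2n}>e_0$), so the ``main obstacle'' you flag is in fact insurmountable along the route you propose. Moreover, the precision $\cc=b_{2n+1}-p^{2n}u_n-p^{2n}u_{2n}$ must be $\ge1$ for the scaffold to exist, which is precisely \eqref{Hineq4} --- another sign that it cannot have been discarded.

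The intended argument is short once \eqref{Hineq4} is kept: by Corollary~\ref{upper vs lower}(b), $b_{2n+1}\le p^{2n}u_{2n+1}$, so \eqref{Hineq4} gives $u_n+u_{2n}<u_{2n+1}$; combined with $u_{2n+1}\le e_0$ this immediately yields \eqref{Hineq1} (since $u_n+(1-p^{-1})u_{2n}<u_n+u_{2n}$), \eqref{Hineq2} (since $p^{-1}u_n+p^{-2}u_{2n}<p^{-1}u_{2n+1}$), \eqref{Hineq3}, and \eqref{Hineq5}. Your computation that the second term in the minimum defining $\cc$ exceeds the first when $u_{2n+1}\le e_0$ is correct and agrees with the paper, but the reduction to Theorem~\ref{Hnscaf} as you have set it up does not go through.
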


\begin{proof}
As in the proof of Theorem~\ref{Hnscaf}, (\ref{Hineq4})
implies $u_n+u_{2n}<u_{2n+1}$.   Combining this with the
assumption $u_{2n+1}\le e_0$ we find that the
assumptions (\ref{Hineq1})--(\ref{Hineq5}) from
Theorem~\ref{Hnscaf} all hold.  Since $u_{2n+1}\le e_0$
we have
\begin{equation} \label{bound}
b_{2n+1}-p^{2n}u_n-p^{2n}u_{2n}<
p^{2n+1}e_0-p^{2n+1}u_{2n+1}+b_{2n+1}.
\end{equation}
Hence by Theorem~\ref{Hnscaf} $K_{2n+1}/K_0$ is an
$H(n)$-extension which admits a Galois scaffold with
precision $\cc=b_{2n+1}-p^{2n}u_n-p^{2n}u_{2n}$.
\end{proof}

\begin{theorem} \label{Mnscaf}
Let $K_0$ be a local field with residue characteristic
$p>2$.  Let $c,\omega_1,\dots,\omega_{2n+1}$ be elements
of $K_0$ and set $a_i=c\omega_i^{p^{2n}}$ for
$1\le i\le2n+1$.  For $1\le i\le2n$ let $\alpha_i$ satisfy
$\alpha_i^p-\alpha_i=a_i$ and define $K_1,\dots,K_{2n}$
by $K_i=K_{i-1}(\alpha_i)$.  Set
\begin{align*}
B&=a_1\alpha_{n+1}+a_2\alpha_{n+2}+\dots
+a_n\alpha_{2n}, \\
C&=D(\alpha_1,a_1)
=-\sum_{i=1}^{p-1}p^{-1}\binom{p}{i}\alpha_1^ia_1^{p-i},
\end{align*}
let $\alpha_{2n+1}$ satisfy
$\alpha_{2n+1}^p-\alpha_{2n+1}=B+C+a_{2n+1}$, and define
$K_{2n+1}=K_{2n}(\alpha_{2n+1})$.  Set $u_i=-v_0(a_i)$
for $1\le i\le2n+1$ and define $b_1,\dots,b_{2n+1}$
recursively by $b_1=u_1$ and
$b_{i+1}-b_i=p^i(u_{i+1}-u_i)$ for $1\le i\le2n$.
Assume that $a_1,\dots,a_{2n}$ are reduced
Artin-Schreier constants, and that
\begin{align}
u_n+(1-p^{-1})u_{2n}&<e_0 \label{Mineq1} \\
p^{-1}u_n+p^{-2}u_{2n}+(1-p^{-1})u_{2n+1}&<e_0
\label{Mineq2} \\
(1-p^{-1}+p^{-2})u_1+(1-p^{-1})u_{2n+1}&<e_0
\label{Mineq6} \\
u_{2n}&<e_0 \label{Mineq3} \\
u_{2n+1}-p^{-2n-1}b_{2n+1}&<e_0 \label{Mineq5} \\
p^{2n}u_n+p^{2n}u_{2n}&<b_{2n+1} \label{Mineq4} \\
p^{2n+1}u_1&<b_{2n+1}. \label{Mineq7}
\end{align}
Then $K_{2n+1}/K_0$ is an $M(n)$-extension with upper
ramification numbers $u_1\le\dots\le u_{2n+1}$ and lower
ramification numbers $b_1\le\dots\le b_{2n+1}$.
Furthermore, $K_{2n+1}/K_0$ has a Galois scaffold with
precision
\[\cc=\min\{b_{2n+1}-p^{2n+1}u_1,\;
b_{2n+1}-p^{2n}u_n-p^{2n}u_{2n},\;
p^{2n+1}e_0-p^{2n+1}u_{2n+1}+b_{2n+1}\}.\]
\end{theorem}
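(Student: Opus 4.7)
The plan is to follow the proof of Theorem~\ref{Hnscaf}, adapted to the $M(n)$ setting. The first step is to check that (\ref{Mineq1})--(\ref{Mineq7}) imply the hypotheses (\ref{mineq1})--(\ref{mineq5}) of Proposition~\ref{extra special p ext}: the first three are (\ref{Mineq1}), (\ref{Mineq2}), (\ref{Mineq6}) verbatim, while (\ref{mineq4}) and (\ref{mineq5}) follow by combining (\ref{Mineq4}), (\ref{Mineq7}), and the bound $b_{2n+1}\le p^{2n}u_{2n+1}$ from Corollary~\ref{upper vs lower}(b). This establishes that $K_{2n+1}/K_0$ is an $M(n)$-extension with upper ramification numbers $u_1\le\dots\le u_{2n+1}$ and lower ramification numbers $b_1\le\dots\le b_{2n+1}$, and it furnishes generators $\sigma_i$ satisfying (\ref{mshift1})--(\ref{mshift6}).

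Next I apply Corollary~\ref{Yval}. Its hypotheses in the $M(n)$ case, namely $u_{2n}<e_0$, $p^{2n}u_n+p^{2n-1}u_{2n}<b_{2n+1}$, and $p^{2n+1}(1-p^{-1}+p^{-2})u_1<b_{2n+1}$, are consequences of (\ref{Mineq3}), (\ref{Mineq4}), and (\ref{Mineq7}), since $1-p^{-1}+p^{-2}<1$. This yields $v_{2n+1}(Y)=-b_1+v_{2n+1}(t_1)$ and $K_{2n+1}=K_0(Y)$. Setting $X_{2n+1}=t_{2n+1}^{-1}Y$, we obtain $v_{2n+1}(X_{2n+1})=-b_{2n+1}$ by (\ref{bdiff}). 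The cofactor expansion (\ref{Yexp}) together with the shift formulas (\ref{mshift1})--(\ref{mshift6}) then lets me evaluate $(\sigma_i-1)X_{2n+1}$ in the cases $i=1$, $2\le i\le n$, $n+1\le i\le 2n$, and $i=2n+1$.

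The essential new feature compared to the $H(n)$ case is (\ref{mshift6}), which gives $v_{2n+1}((\sigma_1-1)\alpha_{2n+1})\ge -(p-1)p^{2n}u_1$ rather than $(\sigma_1-1)\alpha_{2n+1}=0$. Hence, writing $(\sigma_1-1)X_{2n+1}=\mu_{1,2n+1}+\epsilon_{1,2n+1}$ with $\mu_{1,2n+1}=t_{2n+1}^{-1}t_1\in K_0$, the error $\epsilon_{1,2n+1}$ picks up this extra contribution, and a short calculation using (\ref{bdiff}) yields $v_{2n+1}(\epsilon_{1,2n+1})-v_{2n+1}(\mu_{1,2n+1})-p^{2n}u_1+b_1\ge b_{2n+1}-p^{2n+1}u_1$. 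For the remaining indices the analysis parallels Theorem~\ref{Hnscaf}: indices $2\le i\le n$ give $p^{2n+1}e_0-p^{2n+1}u_i+b_i$, which by Corollary~\ref{upper vs lower}(a) is at least the final term of $\cc$; indices $n+1\le i\le 2n$ yield $b_{2n+1}-p^{2n}u_{i-n}-p^{2n}u_i$, minimized at $i=2n$ as $b_{2n+1}-p^{2n}u_n-p^{2n}u_{2n}$; and $i=2n+1$ gives $p^{2n+1}e_0-p^{2n+1}u_{2n+1}+b_{2n+1}$.

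Combining these with the bounds from Proposition~\ref{elemX} for $j\le 2n$ (which dominate via the analog of (\ref{rel})), and verifying positivity of $\cc$ using (\ref{Mineq4}), (\ref{Mineq5}), and (\ref{Mineq7}), Theorem~\ref{scafcond} yields the desired Galois scaffold with precision equal to the stated minimum. The principal obstacle will be executing the $(\sigma_1-1)X_{2n+1}$ valuation computation sharply enough to extract exactly the new term $b_{2n+1}-p^{2n+1}u_1$, and to confirm that no interference between the $t_{2n+1}^{-1}t_1(\sigma_1-1)\alpha_1$ and $(\sigma_1-1)\alpha_{2n+1}$ contributions degrades the bound; everything else closely mirrors the $H(n)$ argument.
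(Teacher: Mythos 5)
Your proposal follows the paper's proof essentially step for step: reduce to Proposition~\ref{extra special p ext} and Corollary~\ref{Yval}, set $X_{2n+1}=t_{2n+1}^{-1}Y$, bound $v_{2n+1}(\epsilon_{i,2n+1})-v_{2n+1}(\mu_{i,2n+1})$ case by case using (\ref{mshift1})--(\ref{mshift6}), and invoke Theorem~\ref{scafcond}; in particular your treatment of the new $i=1$ case, where $(\sigma_1-1)\alpha_{2n+1}$ contributes the extra term $b_{2n+1}-p^{2n+1}u_1$ (more precisely, the minimum of that and $p^{2n+1}e_0-p^{2n+1}u_1+b_1$, which is absorbed by the last term of $\cc$ via (\ref{imin})), matches the paper. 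The one step that does not go through as written is your derivation of the hypothesis $(p+1-p^{-1})u_1<u_{2n+1}$ (condition (\ref{mineq5}) of Proposition~\ref{extra special p ext}, needed there to apply Proposition~\ref{Cp2}(c)): combining (\ref{Mineq7}) with Corollary~\ref{upper vs lower}(b), i.e.\ with $b_{2n+1}\le p^{2n}u_{2n+1}$, only yields $pu_1<u_{2n+1}$, which is strictly weaker. You need part (a) of that corollary instead: $b_{2n+1}-b_1\le p^{2n}(u_{2n+1}-u_1)$ together with $p^{2n+1}u_1<b_{2n+1}$ gives $(p+1-p^{-2n})u_1<u_{2n+1}$, which suffices because $p^{-2n}\le p^{-1}$. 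This is exactly how the paper argues; with that substitution your outline is complete and coincides with the paper's proof.
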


\begin{proof}
As in the proof of Theorem~\ref{Hnscaf}, (\ref{Mineq4})
implies $u_n+u_{2n}<u_{2n+1}$.  In addition, by
(\ref{Mineq7}) and Corollary~\ref{upper vs lower}(a) we get
$(p+1-p^{-1})u_1<u_{2n+1}$.  Hence by
Proposition~\ref{extra special p ext}, $K_{2n+1}/K_0$ is
an $M(n)$-extension with upper ramification numbers
$u_1\le\dots\le u_{2n+1}$ and lower ramification
numbers $b_1\le\dots\le b_{2n+1}$.  By (\ref{Mineq3}),
(\ref{Mineq4}), (\ref{Mineq7}), and Corollary~\ref{Yval}
we get $v_{2n+1}(Y)=-b_1+v_{2n+1}(t_1)$ and
$K_{2n+1}=K_0(Y)$.  Set $X_{2n+1}=t_{2n+1}^{-1}Y$; then
$v_{2n+1}(X_{2n+1})=-b_{2n+1}$.  By (\ref{Yexp}),
(\ref{mshift1}), and (\ref{mshift3}) we get
\begin{alignat*}{2}
(\sigma_1-1)(X_{2n+1})
&=t_{2n+1}^{-1}t_1(\sigma_1-1)(\alpha_1)
+(\sigma_1-1)(\alpha_{2n+1}), \\
(\sigma_i-1)(X_{2n+1})
&=t_{2n+1}^{-1}t_i(\sigma_i-1)(\alpha_i)
&&\quad(2\le i\le n), \\
(\sigma_{n+i}-1)(X_{2n+1})
&=t_{2n+1}^{-1}t_{n+i}(\sigma_{n+i}-1)(\alpha_{n+i})
+(\sigma_{n+i}-1)(\alpha_{2n+1})&&\quad(1\le i\le n), \\
(\sigma_{2n+1}-1)(X_{2n+1})
&=(\sigma_{n+i}-1)(\alpha_{2n+1}).
\end{alignat*}

     For $1\le i\le2n+1$ set
$\mu_{i,2n+1}=t_{2n+1}^{-1}t_i$; then
$v_{2n+1}(\mu_{i,2n+1})=b_i-b_{2n+1}$.  Write
$(\sigma_i-1)(X_{2n+1})=\mu_{i,2n+1}+\epsilon_{i,2n+1}$
with $\epsilon_{i,2n+1}\in K_{2n+1}$.  Then by
(\ref{mshift2}), (\ref{mshift6}), (\ref{mshift4}), and
(\ref{mshift5}) we have
\begin{alignat*}{2}
v_{2n+1}(\epsilon_{1,2n+1})-v_{2n+1}(\mu_{1,2n+1})
&\ge\min\{p^{2n+1}e_0-(p-1)p^{2n}u_1, \\
&\hspace*{4cm}b_{2n+1}-b_1-(p-1)p^{2n}u_1\},\hspace{-2cm}
\\[1mm]
v_{2n+1}(\epsilon_{i,2n+1})-v_{2n+1}(\mu_{i,2n+1})
&\ge p^{2n+1}e_0-(p-1)p^{2n}u_i
&&(2\le i\le n), \\[1mm]
v_{2n+1}(\epsilon_{n+i,2n+1})-v_{2n+1}(\mu_{n+i,2n+1})
&\ge\min\{p^{2n+1}e_0-(p-1)p^{2n}u_{n+i}, \\
&\hspace{2.5cm}b_{2n+1}-b_{n+i}-p^{2n}u_i\}&&(1\le i\le n),
\\[1mm]
v_{2n+1}(\epsilon_{2n+1,2n+1})-v_{2n+1}(\mu_{2n+1,2n+1})
&\ge p^{2n+1}e_0-(p-1)p^{2n}u_{2n+1}.
\end{alignat*}
It follows that
\begin{align*}
v_{2n+1}(\epsilon_{1,2n+1})-v_{2n+1}(\mu_{1,2n+1})
-p^{2n}u_1+b_1
&\ge\min\{p^{2n+1}e_0-p^{2n+1}u_1+b_1, \\
&\hspace*{2.5cm}b_{2n+1}-p^{2n+1}u_1\}, \\
v_{2n+1}(\epsilon_{2n+1,2n+1})-v_{2n+1}(\mu_{2n+1,2n+1})
-p^{2n}u_{2n+1}+b_{2n+1}
&\ge p^{2n+1}e_0-p^{2n+1}u_{2n+1}+b_{2n+1}.
\end{align*}
In addition, for $2\le i\le n$ we have
\[v_{2n+1}(\epsilon_{i,2n+1})-v_{2n+1}(\mu_{i,2n+1})
-p^{2n}u_i+b_i\ge p^{2n+1}e_0-p^{2n+1}u_i+b_i\]
and for $1\le i\le n$ we have
\begin{align*}
v_{2n+1}(\epsilon_{n+i,2n+1})-v_{2n+1}(\mu_{n+i,2n+1})
-p^{2n}u_{n+i}+b_{n+i}
&\ge\min\{p^{2n+1}e_0-p^{2n+1}u_{n+i}+b_{n+i}, \\
&\hspace{1.7cm}b_{2n+1}-p^{2n}u_i-p^{2n}u_{n+i}\}.
\end{align*}
Clearly $b_{2n+1}-p^{2n}u_i-p^{2n}u_{n+i}$ is minimized
for $1\le i\le n$ by taking $i=n$.  It now follows from
(\ref{imin}), (\ref{rel}), (\ref{elemc}), and
Theorem~\ref{scafcond} that $K_{2n+1}/K_0$ has a Galois
scaffold with the precision $\cc$ given in the statement
of the theorem.
\end{proof}

     Theorem~\ref{Mnscaf}, like Theorem~\ref{Hnscaf},
can be simplified by strengthening the hypotheses:

\begin{corollary} \label{Mnsimp}
Let $K_{2n+1}/K_0$ be an extension constructed as in
Theorem~\ref{Mnscaf}, but with
(\ref{Mineq1})--(\ref{Mineq5}) replaced with the single
assumption $u_{2n+1}\le e_0$.  Then $K_{2n+1}/K_0$ is an
$M(n)$-extension which has a Galois scaffold with
precision
\[\cc=\min\{b_{2n+1}-p^{2n+1}u_1,\;
b_{2n+1}-p^{2n}u_n-p^{2n}u_{2n}\}.\]
\end{corollary}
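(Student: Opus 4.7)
The plan is to mirror the proof of Corollary~\ref{Hnsimp}: first I would show that $u_{2n+1}\le e_0$ together with the retained conditions (\ref{Mineq4}) and (\ref{Mineq7}) already forces each of (\ref{Mineq1})--(\ref{Mineq5}), so that Theorem~\ref{Mnscaf} may be invoked directly, and then argue that the third term in its precision formula is redundant.

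For the first step, I would use the recursion $b_1=u_1$, $b_{i+1}-b_i=p^i(u_{i+1}-u_i)$ together with the monotonicity $u_1\le\cdots\le u_{2n}$ (built into the definition of reduced Artin-Schreier constants) to obtain the elementary inequality $b_{2n+1}\le p^{2n}u_{2n+1}$. Combined with (\ref{Mineq4}) this yields $u_n+u_{2n}<u_{2n+1}$, and combined with (\ref{Mineq7}) it yields $pu_1<u_{2n+1}$. Together with $u_{2n+1}\le e_0$ and monotonicity, these two consequences produce each of (\ref{Mineq1})--(\ref{Mineq5}) by routine arithmetic. The fussiest case is (\ref{Mineq6}): from $u_1<p^{-1}u_{2n+1}\le p^{-1}e_0$ one finds
\[(1-p^{-1}+p^{-2})u_1+(1-p^{-1})u_{2n+1}<(p^{-1}-p^{-2}+p^{-3})e_0+(1-p^{-1})e_0<e_0.\]

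With the hypotheses of Theorem~\ref{Mnscaf} verified, that theorem furnishes a Galois scaffold of precision
\[\cc_0=\min\{b_{2n+1}-p^{2n+1}u_1,\ b_{2n+1}-p^{2n}u_n-p^{2n}u_{2n},\ p^{2n+1}e_0-p^{2n+1}u_{2n+1}+b_{2n+1}\}.\]
Since $u_{2n+1}\le e_0$, the third entry is at least $b_{2n+1}$ and so strictly exceeds both of the first two. The minimum therefore collapses to the two-term expression stated in the corollary, and $\cc\ge 1$ is immediate from (\ref{Mineq4}) and (\ref{Mineq7}). The only real obstacle is the bookkeeping in the first step; among its pieces (\ref{Mineq6}) is the one that is not automatic from $u_n+u_{2n}<u_{2n+1}\le e_0$ alone, and it requires combining (\ref{Mineq7}) with $b_{2n+1}\le p^{2n}u_{2n+1}$ as above.
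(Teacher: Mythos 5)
Your proposal is correct and follows essentially the same route as the paper: derive $u_n+u_{2n}<u_{2n+1}$ and $pu_1<u_{2n+1}$ from the retained hypotheses via $b_{2n+1}\le p^{2n}u_{2n+1}$, combine with $u_{2n+1}\le e_0$ to recover all the dropped inequalities of Theorem~\ref{Mnscaf}, and then observe that the third term in the precision formula is dominated. The only difference is that you spell out the ``routine arithmetic'' (notably for (\ref{Mineq6})) that the paper leaves implicit, and your verification there is accurate.
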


\begin{proof}
As in the proof of Theorem~\ref{Hnscaf}, (\ref{Mineq4})
implies $u_n+u_{2n}<u_{2n+1}$.  In addition, using
Corollary~\ref{upper vs lower} and (\ref{Mineq7}) we get
$pu_1<u_{2n+1}$.  Combining these inequalities with
$u_{2n+1}\le e_0$ gives assumptions
(\ref{Mineq1})--(\ref{Mineq5}) of Theorem~\ref{Mnscaf}.
Since $u_{2n+1}\le e_0$, the inequality (\ref{bound}) is
valid.  Therefore by Theorem~\ref{Mnscaf} $K_{2n+1}/K_0$
is an $M(n)$-extension which admits a Galois scaffold
with the given precision.
\end{proof}

\begin{remark}
Suppose $\ch(K)=p$.  In this setting,
Theorem~\ref{Hnscaf} reduces to Corollary~\ref{Hnsimp},
and Theorem~\ref{Mnscaf} reduces to
Corollary~\ref{Mnsimp}.  In \cite{EKgen},
$G$-extensions $L/K$ with a Galois scaffold were
constructed for an arbitrary finite $p$-group $G$.  In
the cases where $G$ is an extraspecial $p$-group, the
approach taken in this paper gives a method for
constructing $G$-extensions with a Galois scaffold which
is more explicit than the method used in \cite{EKgen}.
\end{remark}

\section{Applications and examples} \label{appex}

The results of the previous section can be used to get
information about the Galois module structure of rings
of integers.  Let $L/K$ be a finite Galois extension of
local fields and set $G=\Gal(L/K)$.  Recall that the
associated order of the ring of integers $\OO_L$ of $L$
is defined to be
\[\A_{L/K}=\{\lambda\in K[G]:
\lambda(\OO_L)\subset\OO_L\}.\]

\begin{theorem} \label{GMS}
Let $G$ be equal to either $H(n)$ or $M(n)$ for some
$n\ge1$.  Let $K_{2n+1}/K_0$ be a totally ramified
$G$-extension constructed using Theorem~\ref{Hnscaf} or
Theorem~\ref{Mnscaf}, so that $K_{2n+1}/K_0$ has a
Galois scaffold with precision $\cc$ for some $\cc\ge1$.
Let $u_1\le\dots\le u_{2n+1}$ be the upper ramification
numbers and $b_1\le\dots\le b_{2n+1}$ the lower
ramification numbers of $K_{2n+1}/K_0$.  Let $r(u_1)$
denote the least nonnegative residue of $u_1$ modulo
$p^{2n+1}$.
\begin{enumerate}[(a)]
\item If $\cc\ge r(u_1)$ and $r(u_1)\mid p^m-1$ for some
$1\le m\le2n+1$ then $\OO_{2n+1}$ is free over its
associated order $\A_{K_{2n+1}/K_0}$.
\item If $\cc\ge2p^{2n+1}-1$ and $r(u_1)=p^{2n+1}-1$
then $\OO_{2n+1}$ is free over $\A_{K_{2n+1}/K_0}$ and
$\A_{K_{2n+1}/K_0}$ is a Hopf order in $K_0[G]$.
Furthermore, $\A_{K_{2n+1}/K_0}$ is a noncommutative
local ring whose only idempotents are 0 and 1.
\end{enumerate}
\end{theorem}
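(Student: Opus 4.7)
The plan is to deduce both parts by direct application of the main Galois-module theorems of \cite{BE18} for extensions admitting Galois scaffolds, once we observe that for the extensions $K_{2n+1}/K_0$ built in Theorems~\ref{Hnscaf} and \ref{Mnscaf} we have $b_1=u_1$, so that $r(u_1)=r(b_1)$. Part~(a) is then the direct translation, with $n$ replaced by $2n+1$, of the Byott-Elder freeness criterion: a totally ramified Galois $p$-extension of degree $p^n$ that admits a Galois scaffold of precision $\cc\ge r(b_1)$, with $r(b_1)\mid p^m-1$ for some $1\le m\le n$, has its ring of integers free of rank one over its associated order. The scaffold whose existence is furnished by Theorem~\ref{Hnscaf} or Theorem~\ref{Mnscaf} has precisely the precision needed, so (a) follows at once.

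For part~(b) I would invoke the Byott-Elder Hopf-order criterion, whose strengthened hypothesis $\cc\ge 2p^{2n+1}-1$ together with $r(u_1)=p^{2n+1}-1$ forces $\A_{K_{2n+1}/K_0}$ to be a Hopf order in $K_0[G]$. The freeness conclusion in (b) is either subsumed by (a), since $r(u_1)=p^{2n+1}-1$ trivially divides $p^m-1$ with $m=2n+1$, or is an output of the Hopf-order statement itself.

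The remaining structural assertions in (b) require only a short, independent argument. Noncommutativity is immediate, since $\A_{K_{2n+1}/K_0}\supseteq\OO_0[G]$ and $G=H(n)$ or $M(n)$ is nonabelian. For the local property one appeals to the standard fact that, for a totally ramified Galois extension of local fields, the associated order is a local $\OO_0$-order; its unique maximal ideal consists of those $\lambda\in\A_{K_{2n+1}/K_0}$ whose image in the residue quotient is not a unit. A local ring has only trivial idempotents, giving the final assertion. I do not anticipate a substantive obstacle, since all the serious work is packaged into Theorems~\ref{Hnscaf}, \ref{Mnscaf}, and the cited Byott-Elder results; the only care required is the reindexing $n\mapsto 2n+1$ and the observation that the divisibility hypothesis of (a) is automatic under the $r(u_1)=p^{2n+1}-1$ hypothesis of (b).
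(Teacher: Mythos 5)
Part (a) of your proposal is essentially the paper's argument: one observes $b_1=u_1$ and that all the $b_i$ are congruent modulo $p^{2n+1}$, so $r(b_{2n+1})=r(u_1)$, and then cites the scaffold-based freeness criterion (the paper uses Theorem~4.8 of \cite{BCE18}). That part is fine.

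Part (b), however, has a genuine gap. There is no off-the-shelf ``Byott--Elder Hopf-order criterion'' that takes $\cc\ge 2p^{2n+1}-1$ and $r(u_1)=p^{2n+1}-1$ as input and outputs that $\A_{K_{2n+1}/K_0}$ is a Hopf order; the Hopf-order conclusion in the paper is obtained by verifying the hypotheses of Proposition~4.5.2 of \cite{Bon02}, and that verification is where the work lies. Concretely, one needs: (i) that $\A_{K_{2n+1}/K_0}$ is indecomposable as a left $\OO_0[G]$-module, which the paper gets by showing $\A_{K_{2n+1}/K_0}$ is a local ring with $\M^d\subset\pi_0\A_{K_{2n+1}/K_0}$ for some $d$ --- and this locality is \emph{not} the ``standard fact'' you invoke. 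The associated order of a totally ramified Galois extension need not be local (e.g.\ for almost maximally ramified $C_p$-extensions the associated order is the maximal order in $K[C_p]\cong K\times K(\zeta_p)$, which has nontrivial idempotents). Locality here is extracted from the scaffold with the strong precision bound $\cc\ge p^{2n+1}+r(u_1)=2p^{2n+1}-1$, via the argument on pp.~985--6 of \cite{BCE18}; this is exactly why the hypothesis $\cc\ge2p^{2n+1}-1$ appears. (ii) One must also check that the different of $K_{2n+1}/K_0$ is generated by an element of $K_0$, which follows from $b_i\equiv-1\pmod{p^{2n+1}}$ for all $i$ (i.e.\ from $r(u_1)=p^{2n+1}-1$); your proposal never uses this hypothesis for anything beyond the divisibility condition in (a), so this input to the Hopf-order criterion is missing. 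Your treatment of the freeness and noncommutativity assertions in (b) is correct, and the step from locality to ``only trivial idempotents'' is also the paper's; the missing content is the proof of locality itself and the different computation feeding into Bondarko's criterion.
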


\begin{proof}
(a) Since $b_{2n+1}\equiv b_1\pmod{p^{2n+1}}$ and $b_1=u_1$
we have $r(b_{2n+1})=r(u_1)$.  Therefore the claim
follows from Theorem~4.8 of \cite{BCE18}.
\\[\medskipamount]
(b) It follows from (a) that $\OO_{2n+1}$ is free over
$\A_{K_{2n+1}/K_0}$.  Using the bound
$\cc\ge p^{2n+1}+r(u_1)$, it is shown in the proof of
Theorem~3.6 of \cite{BCE18} (pp.~985--6) that
$\A_{K_{2n+1}/K_0}$ has a unique maximal ideal $\M$,
that $\A_{K_{2n+1}/K_0}/\M\cong\OO_0/\M_0$, and that
there is $d\ge1$ such that
$\M^d\subset\pi_0\A_{K_{2n+1}/K_0}$.  It follows that
every element of $\A_{K_{2n+1}/K_0}\smallsetminus\M$ is
a unit.  Let $e\in\A_{K_{2n+1}/K_0}$ be idempotent, with
$e\not=1$.  Then $e\not\in\A_{K_{2n+1}/K_0}^{\times}$,
so $e\in\M$.  Hence for all $t\ge1$ we have
$e=e^{td}\in\pi_0^t\A_{K_{2n+1}/K_0}$.  Since
$\A_{K_{2n+1}/K_0}$ is a free $\OO_0$-module, this
implies $e=0$.  Therefore the only idempotents of
$\A_{K_{2n+1}/K_0}$ are 0 and 1, so $\A_{K_{2n+1}/K_0}$
is indecomposable as a left $\A_{K_{2n+1}/K_0}$-module.
Since $\A_{K_{2n+1}/K_0}$ is a free $\OO_0$-module and
$\A_{K_{2n+1}/K_0}/\OO_0[G]$ is a torsion
$\OO_0$-module, it follows that $\A_{K_{2n+1}/K_0}$ is
indecomposable as a left $\OO_0[G]$-module.  In
addition, since $b_i\equiv-1\pmod{p^{2n+1}}$ for
$1\le i\le 2n+1$, the different of $K_{2n+1}/K_0$ is
generated by an element of $K$.  It now follows from
Proposition~4.5.2 of \cite{Bon02} that
$\A_{K_{2n+1}/K_0}$ is a Hopf order in $K_0[G]$.
\end{proof}

\begin{example} \label{ex}
Let $n\ge1$ and let $K_0$ be a local field of
characteristic 0 whose residue field $\OO_0/\M_0$ has
characteristic $p>2$ and cardinality $\ge p^{2n}$.  Let
$u\ge1$ with $p\nmid u$ and let $c\in K_0$ satisfy
$v_0(c)=-u$.  Let $\omega_1,\dots,\omega_{2n}$ be
elements of $\OO_0$ whose images in $\OO_0/\M_0$ are
linearly independent over $\F_p$.  For $1\le i\le2n$ set
$a_i=c\omega_i^{p^{2n}}$ and let $\alpha_i$ be a root of
$X^p-X-a_i$.  Define $K_1,\dots,K_{2n}$ recursively by
$K_i=K_{i-1}(\alpha_i)$ for $1\le i\le2n$.  Let $t\ge1$
and let $\omega_{2n+1}\in K_0$ satisfy
$v_0(\omega_{2n+1})=-t$.  Set
$a_{2n+1}=c\omega_{2n+1}^{p^{2n}}$.  Then in the
notation of Theorems~\ref{Hnscaf} and \ref{Mnscaf} we
have $u_i=b_i=u$ for $1\le i\le2n$,
$u_{2n+1}=tp^{2n}+u$, and $b_{2n+1}=tp^{4n}+u$.

     Now assume that $tp^{2n}+u\le e_0$ and
$2u\le tp^{2n}$.  Define $B$ as in Theorem~\ref{Hnscaf},
let $\alpha_{2n+1}$ be a root of $X^p-X-(B+a_{2n+1})$,
and set $K_{2n+1}=K_{2n}(\alpha_{2n+1})$.  Then by
Corollary~\ref{Hnsimp}, $K_{2n+1}/K_0$ is an
$H(n)$-extension which has a Galois scaffold with
precision $\cc=tp^{4n}+u-2up^{2n}$.  Hence by
Theorem~\ref{GMS}(a) if $r(u)\mid p^m-1$ for some $m$
such that $1\le m\le 2n+1$ then $\OO_{2n+1}$ is free
over its associated order $\A_{K_{2n+1}/K_0}$.  In
addition, if $r(u)=p^{2n+1}-1$ and $2u+2p\le tp^{2n}$
then $\A_{K_{2n+1}/K_0}$ is a Hopf order by
Theorem~\ref{GMS}(b).  For instance, if $u=t=1$ and
$e_0\ge p^{2n}+1$ then $\OO_{2n+1}$ is free over
$\A_{K_{2n+1}/K_0}$, while if $u=p^{2n+1}-1$, $t=2p+1$,
and $e_0\ge3p^{2n+1}+p^{2n}-1$ then $\A_{K_{2n+1}/K_0}$
is a Hopf order.

     Alternatively, we can assume that $tp^{2n}+u\le e_0$
and $pu\le tp^{2n}$, and define $B$ and $C$ as in
Theorem~\ref{Mnscaf}.  We let $\alpha_{2n+1}$ be a root of
$X^p-X-(B+C+a_{2n+1})$ and set
$K_{2n+1}=K_{2n}(\alpha_{2n+1})$.  Then by
Corollary~\ref{Mnsimp}, $K_{2n+1}/K_0$ is an
$M(n)$-extension which has a Galois scaffold with
precision $\cc=tp^{4n}+u-up^{2n+1}$.  If
$r(u)\mid p^m-1$ for some $m$ such that $1\le m\le 2n+1$
then $\OO_{K_{2n+1}}$ is free over its associated order
$\A_{K_{2n+1}/K_0}$.  In particular, if $r(u)=p^{2n+1}-1$
and $p(u+2)\le tp^{2n}$ then $\A_{K_{2n+1}/K_0}$ is a
Hopf order.  For instance, if $u=t=1$ and $e_0\ge
p^{2n}+1$, then $\OO_{K_{2n+1}}$ is free over
$\A_{K_{2n+1}/K_0}$, while if $u=p^{2n+1}-1$, $t=p^2+1$,
and $e_0\ge p^{2n+2}+p^{2n+1}+p^{2n}-1$ then
$\A_{K_{2n+1}/K_0}$ is a Hopf order.
\end{example}

\begin{remark}
Let $K$ be a local field of characteristic 0 with
residue characteristic $p$, let $G$ be a finite
noncyclic $p$-group, and let $\A$ be an $\OO_K$-order
in $K[G]$.  In Theorem~3.11 of \cite{BD09} it is proved
that the following are equivalent:
\begin{enumerate}[(i)]
\item $\A$ is a Hopf algebra over $\OO_K$ whose
$\OO_K$-dual $\A^*$ is a  local ring and a monogenic
$\OO_K$-algebra.
\item $\A$ contains no nontrivial idempotents and there
exists a totally ramified $G$-extension $L/K$ such that
the different of $L/K$ is generated by an element of
$K$, $\OO_L$ is free over $\A_{L/K}$, and the associated
order $\A_{L/K}$ of $\OO_L$ is isomorphic to $\A$.
\end{enumerate}
It follows that if $K_{2n+1}/K_0$ satisfies the
conditions of Theorem~\ref{GMS}(b) then
$\A_{K_{2n+1}/K_0}^*$ is a local ring and a monogenic
$\OO_0$-algebra.  In Section 4 of \cite{BD09} a method
is given for constructing $H(1)$-extensions $K_3/K_0$
which satisfy these equivalent conditions.
Example~\ref{ex} gives explicit constructions of
$H(n)$-extensions and $M(n)$-extensions which satisfy
these conditions.
\end{remark}

\bibliographystyle{amsalpha}
\bibliography{special}

\end{document}